\documentclass[12pt,reqno]{amsart}
\usepackage{amsmath}
\usepackage{amssymb}
\usepackage{amstext}
\usepackage{mathrsfs}
\usepackage{a4wide}
\usepackage{graphicx}
\usepackage{bbm}

\allowdisplaybreaks \numberwithin{equation}{section}
\usepackage{color}
\usepackage{cases}

\numberwithin{equation}{section}

\newtheorem{theorem}{Theorem}[section]

\newtheorem{lemma}[theorem]{Lemma}

\theoremstyle{definition}

\theoremstyle{remark}
\newtheorem{remark}[theorem]{Remark}

\newcommand{\Om}{\Omega}

\begin{document}

\title
[co-rotating and travelling annular patches]{Existence of co-rotating and travelling vortex patches with doubly connected components for active scalar equations}

\author{Daomin Cao, Guolin Qin,  Weicheng Zhan, Changjun Zou}

\address{Institute of Applied Mathematics, Chinese Academy of Sciences, Beijing 100190, and University of Chinese Academy of Sciences, Beijing 100049,  P.R. China}
\email{dmcao@amt.ac.cn}
\address{Institute of Applied Mathematics, Chinese Academy of Sciences, Beijing 100190, and University of Chinese Academy of Sciences, Beijing 100049,  P.R. China}
\email{qinguolin18@mails.ucas.edu.cn}
\address{Institute of Applied Mathematics, Chinese Academy of Sciences, Beijing 100190, and University of Chinese Academy of Sciences, Beijing 100049,  P.R. China}
\email{zhanweicheng16@mails.ucas.ac.cn}

\address{Institute of Applied Mathematics, Chinese Academy of Sciences, Beijing 100190, and University of Chinese Academy of Sciences, Beijing 100049,  P.R. China}
\email{zouchangjun17@mails.ucas.ac.cn}


\begin{abstract}
	By applying implicit function theorem on contour dynamics, we prove the existence of co-rotating and travelling patch solutions for both Euler and the generalized surface quasi-geostrophic equation. The solutions obtained  constitute a desingularization of points vortices when the size of patch support vanishes. In particular, solutions constructed in this paper consist of doubly connected components, which is essentially different from all known results.
\end{abstract}

\maketitle

\section{Introduction}
We consider the following active scalar equation
\begin{align}\label{1-1}
	\begin{cases}
		\partial_t\theta+\mathbf{u}\cdot \nabla \theta =0&\text{in}\ \mathbb{R}^2\times (0,T)\\
		\ \mathbf{u}=\nabla^\perp(-\Delta)^{-1+\frac{\alpha}{2}}\theta     &\text{in}\ \mathbb{R}^2\times (0,T)
		\\
		\theta\big|_{t=0}=\theta_0 &\text{in}\ \mathbb{R}^2,
	\end{cases}
\end{align}
where $ 0\le\alpha<2$, $\theta(\boldsymbol x,t):\mathbb{R}^2\times (0,T)\to \mathbb{R}$ is the active scalar being transported by the velocity field $\mathbf{u}(\boldsymbol x,t):\mathbb{R}^2\times (0,T)\to \mathbb{R}^2$ generated by $\theta$, and $(x_1,x_2)^\perp=(x_2,-x_1)$. The operator $(-\Delta)^{-1+\frac{\alpha}{2}}$ is given by the expression
\begin{equation*}
	(-\Delta)^{-1+\frac{\alpha}{2}}\theta(\boldsymbol  x)=\int_{\mathbb{R}^2}K_\alpha(\boldsymbol x- \boldsymbol y)\theta(\boldsymbol y)d\boldsymbol y,
\end{equation*}
where $K_\alpha$ is the fundamental solution of $(-\Delta)^{-1+\frac{\alpha}{2}}$ in $\mathbb{R}^2$ given by
\begin{equation*}
K_\alpha(\boldsymbol  x )=\left\{
	\begin{array}{lll}
		\frac{1}{2\pi}\ln \frac{1}{|\boldsymbol x|}, \ \ \ \ \ \ \ \ \ \ \ \ \ \ \ \ \ \ \ \ \ & \text{if} \ \ \alpha=0;\\
		\frac{C_\alpha}{2\pi}\frac{1}{|\boldsymbol x |^\alpha}, \ \ \ C_\alpha=\frac{\Gamma(\alpha/2)}{2^{1-\alpha}\Gamma(\frac{2-\alpha}{2})}, & \text{if} \ \ \ 0<\alpha<2.
	\end{array}
	\right.
\end{equation*}
with $\Gamma$ the Euler gamma function.

When $\alpha=0$, \eqref{1-1} corresponds to the vorticity formulation of 2D incompressible Euler equation. In the case $\alpha=1$, \eqref{1-1} becomes the surface quasi-geostrophic (SQG) equation, which models geophysical flows and is applied to study the atmosphere circulation and ocean dynamics \cite{Hel, Juc, Lap}. The SQG equation also shares mathematical analogy with the three-dimensional incompressible Euler equation \cite{Con}. As a generalization of the Euler equation and the SQG equation, the gSQG model \eqref{1-1} with $0<\alpha<2$ was proposed by C\'{o}rdoba et al. in \cite{Cor}.

Mathematical study of \eqref{1-1} has attracted more and more attention in the past years. For the case $\alpha=0$, global well-posedness of solutions has already been solved in Sobolev spaces \cite{MB}. While Yudovich \cite{Yud} proved the existence and uniqueness of solutions to \eqref{1-1} with the initial data in $L^1\cap L^\infty$ in 1963. It is natural to ask whether it is possible to extend theories of the Euler equation to the general case $0<\alpha<2$ and a lot of effort has been made in recent years. Constantin et al. \cite{Con} established local well-posedness of the gSQG equation for classical solutions with sufficiently regular initial data. Local existence of solutions was also studied in different function spaces, see \cite{Chae0,Li0,Wu1,Wu2}.  Uniqueness and non-uniqueness of weak solutions under certain assumptions was recently shown in \cite{Cor1} and \cite{Buc} respectively. However, the problem whether the gSQG equation has global solutions or not remains unsolved. In \cite{Kis}, Kiselev and Nazarov constructed solutions of the gSQG equations with arbitrary Sobolev growth. Resnick \cite{Res} proved global existence of weak solutions to the SQG equations with any initial data in $L^2$, which was improved by Marchand \cite{Mar} to any initial data belonging to $L^p$ with $p>4/3$. To give concrete examples for global existence, various kind of global solutions to \eqref{1-1} were also constructed.

Due to the structure of the nonlinear term, all radially symmetric functions are stationary solutions to Euler and the gSQG equation. The first explicit non-trivial rotating solution of $2$-fold symmetry for $\alpha=0$ is constructed by Kirchhoff \cite{Kir}, which is an elliptic patch of semi-axes $a$ and $b$ subjected to a perpetual rotation with uniform angular velocity $ab/(a+b)^2$. Deem and Zabusky \cite{Deem} then carried out a series of numerical simulations, which provided evidence for existence of various $N$-fold symmetric solutions with $N\ge2$. In 1980s, Burbea \cite{Burb} outlined a new way of constructing $N$-fold symmetric patch solutions by bifurcation theory. Following Burbea's framework, Hassainia and Hmidi \cite{Has} used the contour dynamics equation to give a rigorous proof of existence for simply-connected $N$-fold symmetric V-states with $\alpha\in[0,1)$, which are bifurcated from the unit disk. Their construction relies on the structure of linearization of contour dynamics equation, and the uniform angular velocity is selected appropriately such that the transversality assumption of Crandall-Rabinowitz's theorem is satisfied. By changing the function spaces from H\"older space to $H^k$ space on torus, Castro et al. \cite{Cas1} extended this  result for the remaining open cases $\alpha\in[1,2)$ and showed the $C^\infty$ regularity of patch boundaries for $0<\alpha<2$.  In \cite{Cora,de1,de3,de2}, the existence of doubly connected V-states are established by studying the bifurcation from annulli with special angular velocity. However, there are two coupled nonlinear equations in this situation, and the estimate for the spectral of linearized operator is much more complicated. In \cite{Cas4,Hmi2}, $2$-fold symmetric V-states bifurcated from Kirchhoff elliptic vortices were considered for $\alpha=0$. Recently, \cite{ Cas3, Cas2,Car} successfully applied the same method to construct travelling and rotating smooth solutions. Using a different method which is based on variational principle, Gravejat and Smets \cite{ Gra} and  Godard-Cadillac \cite{Go0} constructed smooth traveling solutions for the SQG and gSQG equations respectively.

It is well-known that the Euler equation and the gSQG equation exhibit point vortex solutions \cite{Ros}. Of special interest are highly concentrated solutions which desingularize the point vortices. There are abundant literatures on the desingularized N-vortices problem for $\alpha=0$ and we refer to \cite{Bu0, Cao1, Cao4, CWZ, Dav, SV,T1, T2} and references therein. As for $0<\alpha<2$, the results are relatively less. The first result was obtained by Hmidi and Mateu \cite{HM}, where they showed the existence of concentrating travelling and co-rotating vortex pairs for $0\leq \alpha<1$ using the contour dynamics equation and implicit function theorem. Recently, Ao et al. \cite{Ao} gave the construction of concentrating smooth solutions for all $\alpha\in(0,2)$ by the Lyapunov-Schmidt reduction method. Godard-Cadillac et al. \cite{Go} constructed concentrating co-rotating $m$-fold symmetric vortex patches for $\alpha\in(0,2)$ via vorticity method.

It is noteworthy that all known concentrating solutions have supports composed of components which are perturbations of small disks. The aim of our paper is to construct co-rotating and travelling patch solutions for $0\leq \alpha<2$, whose supports consist of doubly connected components perturbed from small annuli. Recall that a planar domain $D_0$ is called doubly connected if $D_0=D_2\setminus D_1$ for simply connected bounded domains $D_1$ and $D_2$ satisfying $\overline{D}_1\subset D_2$. Our solutions provide a brand new phenomenon for the behavior of quasi-geostrophic flow.

The first kind of global solutions we are concerned with in this paper are co-rotating solutions. Suppose that the initial data $\theta_{0,\varepsilon}$ is of $N$-fold symmetric patch type, that is
\begin{equation}\label{1-2}
	\theta_{0,\varepsilon}( x)=\frac{1}{\pi(1-b^2+\gamma b^2)\varepsilon^2}\sum\limits_{n=0}^{N-1}\omega_n^\varepsilon,
\end{equation}
where $\omega_0^\varepsilon=\boldsymbol\chi_{D_{out}^\varepsilon} +(\gamma-1)\boldsymbol\chi_{D_{in}^\varepsilon}$ for some $b\in(0,1)$, $\gamma\not= \frac{b^2-1}{b2}$ and $\omega_n^\varepsilon$ satisfy
\begin{equation*}
	\omega_n^\varepsilon(\boldsymbol x)=\omega_0^\varepsilon(Q_{\frac{2\pi n}{N}}\left(\boldsymbol x-d\boldsymbol e_1\right)+d\boldsymbol e_1),\quad \forall \,n=1,2,\ldots N-1
\end{equation*}with some $d>0$ fixed, $\boldsymbol e_1=(1,0)$ the unit vector in $x_1$ direction and  $Q_\beta$ to denote the counterclockwise rotation operator of angle $\beta$ with respect to the origin.
In addition, $D_{\text{out}}^\varepsilon$ and  $D_{\text{in}}^\varepsilon$ are simply connected domains satisfying $\overline{D}_{\text{in}}^\varepsilon\subset D_{\text{out}}^\varepsilon $ and are perturbations of small disks $\varepsilon B_1(0)$ and $\varepsilon B_b(0)$  respectively. Notice that if $\gamma=0$, then  $\omega_0^\varepsilon=\boldsymbol\chi_{D_{\text{out}}^\varepsilon\setminus D_{\text{in}}^\varepsilon}$ and $D_{\text{out}}^\varepsilon\setminus D_{\text{in}}^\varepsilon$ is a doubly connected domain. For $\gamma\ll 0$, an interesting phenomenon appears  where $\omega_1^\varepsilon<0$ in $D_{\text{out}}\setminus D_{\text{in}}$, and $\omega_1^\varepsilon>0$ in $D_{\text{in}}$. The case $\gamma=1$ corresponds to simply connected cases, which have been studied as mentioned above. We intend to prove the existence of a series of co-rotating $N$-fold symmetric solutions to \eqref{1-1} about $(d,0)$, which take the form
\begin{equation*}
	\theta_\varepsilon( \boldsymbol x,t)=\theta_{0,\varepsilon}\left(Q_{\Omega t}( \boldsymbol x-d\boldsymbol e_1)+d\boldsymbol e_1\right)
\end{equation*}
with $\Omega$ some fixed angular velocity.

As mentioned above, Hmidi and Mateu \cite{HM} obtained the existence of concentrating travelling and co-rotating vortex pairs with simply connected components for $0\leq \alpha<1$ using the contour dynamics equation and implicit function theorem. To sketch their proof, we denote $G(\varepsilon, \Om, f)=0$ as the contour dynamics equation that the patch boundary satisfies, where $f$ parameterizes the boundary. The first step is to extend $G(\varepsilon, \Om, f)$ such that $\varepsilon\leq 0$ is allowed. One can verify the case $\varepsilon=0$, $f=0$ corresponds to the point vortices and hence $G(0, \Om, 0)=0$ holds for some special $\Om=\Om_{sing}$. To apply the implicit function theorem, the Gautex derivative $\partial_f G(0, \Om, 0)=0$ should be an isomorphism, which only holds true for the space where the first Fourier coefficient vanishes. Consequently, the condition $\int G(\varepsilon, \Om, f) \sin(x)dx=0$ is needed, which can be achieved by choosing proper
$\Om(\varepsilon,f)=\Om_{sing}+O(\varepsilon)$. Then the existence of solutions follows from the implicit function theorem.

However, for the doubly connected components case, there are two coupled equations $G_1(\varepsilon, \Om, f_1,f_2)=0$ and $G_2(\varepsilon, \Om, f_1,f_2)=0$ to be solved. It seems impossible to choose suitable $\Om(\varepsilon)$ such that $\int G_1(\varepsilon, \Om, f_1,f_2)\sin(x)dx=0$ and $\int G_2(\varepsilon, \Om, f_1,f_2)\sin(x)dx=0$ hold simultaneously. Fortunately, we find that letting $$\int G_1(\varepsilon, \Om, f_1,f_2)\sin(x)dx =(1-\gamma)b^2 \int G_2(\varepsilon, \Om, f_1,f_2)\sin(x)dx$$ is enough for our purpose if we take appropriate function spaces. By choosing $\Om$ such that this assumption is satisfied, we successfully apply  implicit function theorem to prove the following  result of existence for the Euler equation.
\begin{theorem}\label{thm1}
	Suppose $\alpha=0$,  $N\ge 2$ and  $\gamma\in \mathbb{R}\setminus\{0\}$. Then there exist $\varepsilon_0>0$ and $b\in(0,1)$,  such that for any $\varepsilon\in (0,\varepsilon_0)$ , equation \eqref{1-1} has a global co-rotating solution $\theta_\varepsilon(\boldsymbol x-d\boldsymbol e_1,t)=\theta_{0,\varepsilon}(Q_{\Omega^0 t}(\boldsymbol x-d\boldsymbol e_1))$ centered at $(d,0)$, where $\theta_{0,\varepsilon}$ is of the form \eqref{1-2}, and $\Omega^0$ satisfies
	$$\Omega^0=\Omega_*^0+O(\varepsilon)$$
	with $\Omega_*^0$ given by
	\begin{equation}\label{1-4}
		\Omega_*^0:=\sum_{n=1}^{N-1} \frac{1-\cos(\frac{2\pi n}{N})}{2\pi \left((-1+\cos(\frac{2\pi n}{N}))^2 +\sin^2(\frac{2\pi n}{N})\right)d^{2}}.
	\end{equation}
    Moreover, it holds in the sense of measures
     \begin{equation*}
    	\theta_{0,\varepsilon}(x)\rightarrow \sum_{n=0}^{N-1}\delta_{Q_{\frac{2n\pi}{N}}(d,0)}\ \ \text{as}\ \ \varepsilon\to 0^+.
    \end{equation*}
\end{theorem}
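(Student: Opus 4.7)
The plan is to follow the contour-dynamics / implicit function framework sketched in the introduction, adapted to the coupling intrinsic to the doubly connected ansatz. First I would parameterize the two boundary components of the reference patch $\omega_0^\varepsilon$ in polar form about $(d,0)$ as
\[
z_{\mathrm{out}}(\theta) = d + \varepsilon R_1(\theta)\, e^{i\theta}, \qquad z_{\mathrm{in}}(\theta) = d + b\varepsilon R_2(\theta)\, e^{i\theta},
\]
with $R_j^2 = 1 + 2\varepsilon f_j$ and $f_j$ an $N$-fold symmetric, mean-zero H\"older function. The $N$-fold symmetric configuration of the whole $\theta_{0,\varepsilon}$ then reduces everything to a pair of equations posed on $\mathbb{S}^1$. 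The rigid rotation of $\theta_{0,\varepsilon}$ at angular velocity $\Omega$ about the origin translates, on each boundary, into a scalar condition
\[
G_j(\varepsilon,\Omega,f_1,f_2) = 0, \qquad j = 1,2,
\]
written as a boundary integral against the Euler kernel $K_0$. Using the normalization $\varepsilon^{-2}$ in \eqref{1-2} to absorb the degenerate self-induction, one verifies that $G = (G_1,G_2)$ extends to a $C^1$ map through $\varepsilon = 0$.

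At $\varepsilon = 0$ the rescaling collapses the patch to unit point vortices at $Q_{2\pi n/N}(d,0)$, and $G_j(0,\Omega,0,0)$ becomes the Kirchhoff--Routh balance for the regular $N$-gon. Summing the Biot--Savart contributions of the other $N-1$ vortices at the reference point $(d,0)$ and using the identity $(-1+\cos\phi)^2+\sin^2\phi = 2(1-\cos\phi)$, one isolates the unique angular velocity $\Omega^0_*$ of the trivial branch, namely the expression in \eqref{1-4}. Thus $(0,\Omega^0_*,0,0)$ is the base point from which I would bifurcate.

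Exploiting the $N$-fold symmetry, the Gateaux derivative $D_{(f_1,f_2)}G(0,\Omega^0_*,0,0)$ acts on the Fourier basis $\{\cos(Nk\theta),\sin(Nk\theta)\}_{k\geq 1}$ as a sequence of $2\times 2$ blocks; for $k\geq 2$ these are invertible for generic $b$, consistently with the annular spectral analysis in \cite{Cora,de1,de3,de2}, while for $k=1$ the block is degenerate because translation of each individual vortex sits in the kernel. The single parameter $\Omega$ cannot cancel the first-mode residues of $G_1$ and $G_2$ simultaneously, so I would instead impose only the weighted scalar constraint
\[
\int_0^{2\pi}\!\bigl[G_1 - (1-\gamma)\,b^2\, G_2\bigr](\varepsilon,\Omega,f_1,f_2)\,\sin\theta\, d\theta = 0,
\]
which the specific weights $1$ and $\gamma-1$ carried by $\omega_0^\varepsilon$ are designed to make achievable in a suitably chosen function space. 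A preliminary one-dimensional implicit function step in $\Omega$ then supplies $\Omega = \Omega(\varepsilon,f_1,f_2) = \Omega^0_* + O(\varepsilon)$ enforcing this single identity.

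On the resulting codimension-one subspace of $(f_1,f_2)$, the restricted derivative $D_{(f_1,f_2)}G$ becomes an isomorphism, and the implicit function theorem produces a branch $(f_1^\varepsilon,f_2^\varepsilon) = O(\varepsilon)$ of nontrivial doubly connected co-rotating patches with $\Omega^\varepsilon = \Omega^0_* + O(\varepsilon)$; the weak-$*$ convergence $\theta_{0,\varepsilon}\to \sum_n \delta_{Q_{2\pi n/N}(d,0)}$ then follows automatically from the mass normalization. I expect the main obstacle to be twofold: first, checking the $C^1$ extension of $G_j$ across $\varepsilon=0$, which demands isolating the logarithmic self-induction of the thin annulus from the regular cross-induction of the other $N-1$ components; and second, verifying the exact proportionality constant $(1-\gamma)b^2$ linking the first-mode obstructions of $G_1$ and $G_2$, since this identity — the structural fact that makes the coupled system tractable — rests on a careful bookkeeping of how the weights $1$ and $\gamma - 1$ combine in the leading Taylor expansion of the Biot--Savart integrals near $\varepsilon=0$.
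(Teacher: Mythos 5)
Your overall scheme is the same as the paper's: derive the two coupled contour-dynamics equations $G_1^0=G_2^0=0$, extend them (and their Gateaux derivatives) continuously across $\varepsilon=0$, read off $\Omega^0_*$ from the point-vortex balance of the regular $N$-gon, enforce the single weighted condition $\int_0^{2\pi}G_1^0\sin x\,dx=(1-\gamma)b^2\int_0^{2\pi}G_2^0\sin x\,dx$ by solving for $\Omega=\Omega^0(\varepsilon,\boldsymbol f)=\Omega^0_*+O(\varepsilon)$, and then apply the implicit function theorem on a pair of codimension-one spaces; this is exactly the content of Lemmas \ref{lm3-1}--\ref{lm3-6}. (One detail you should still record: after substituting $\Omega^0(\varepsilon,\boldsymbol f)$ one needs $\partial_{\boldsymbol f}\Omega^0(0,0)=0$ so that, by the chain rule, $\partial_{\boldsymbol f}\tilde G^0(0,0)=\partial_{\boldsymbol f}G^0(0,\Omega^0_*,0)$ is the operator whose invertibility you use.)

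There is, however, a concrete error in your description of the linearization. The $N$-fold symmetry lives in the configuration of the $N$ annular patches, not in the shape of each boundary: the perturbations $f_1,f_2$ must be allowed to carry \emph{all} cosine modes $j\ge 1$, the linearized operator is block-diagonal with the $2\times 2$ blocks $M_j$ of Lemma \ref{lm3-4} over all $j\ge 1$, and the degenerate (rank-one) block is $j=1$, whose kernel $(1,1)$ is the rigid translation of the annulus. If you literally work in the span of $\{\cos(Nk\theta),\sin(Nk\theta)\}_{k\ge1}$, then for $N\ge 2$ the translation mode you invoke is not even present, and, worse, the $\sin x$ forcing produced at $\varepsilon=0$ by the rotation term and by the other $N-1$ patches does not lie in your space, so the system does not close; the $\Omega$-adjustment and the weighted constraint exist precisely to deal with that first mode. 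Two further points need to be made precise. First, the codimension-one \emph{domain} subspace cannot be arbitrary: the paper takes $\int f_1\cos x\,dx=(1-\gamma)b^2\int f_2\cos x\,dx$, chosen so that the rank-one block $M_1$ maps this line isomorphically onto the constrained range in the target (this requires transversality to the kernel, i.e.\ $(1-\gamma)b^2\neq 1$). Second, ``invertible for generic $b$'' for $j\ge 2$ is not sufficient: since the operator maps $X^k_b\to Y^{k-1}_b$, one needs $\det M_j\sim j^2$ so that $M_j^{-1}=O(j^{-1})$ and the inverse regains the lost derivative; this is exactly where the hypothesis $\gamma\neq 0$ enters (for $\gamma=0$ an entry of $M_j^{-1}$ is $O(1)$ and the inverse fails to land in $X^k$, see the remark after Lemma \ref{lm3-5}), and the admissible $b$'s come from the case analysis there ($\gamma\in(0,1)$: $b<\tfrac{\sqrt2}{2}$; $\gamma\ge 1$: all $b$; $\gamma<0$: all but countably many $b$).
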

\begin{remark}
	If we take $\mathcal{B}$ as the set of all $b$ such that Theorem \ref{thm1} holds true. Then $(0,1)\setminus \mathcal{B}$ composed of at most countable many points.  Moreover, for $\gamma\in(0,1)$ it holds $(0,\frac{\sqrt 2}{2})\subset \mathcal{B}$ and if $\gamma\geq1$ then $\mathcal{B}=(0,1)$. Theorem \ref{thm1} does not cover the case $\gamma=0$ due to the fail of the invertibility of derivative matrices, see Lemma \ref{lm3-5} for details. However, since the bifurcation method does not rely on this invertibility, in \cite{de2} the authors let $\gamma=0$ and constructed doubly connected vorticity to the Euler equation.
\end{remark}

We also construct co-rotating solutions of the form \eqref{1-2} for the gSQG equation. Since the linearized operator has better property than the Euler case ($\alpha=0$), we can take $\gamma=0$ here, which means that the solutions constructed have supports composed of doubly connected components.

\begin{theorem}\label{thm1'}
	Suppose $\alpha\in (0,2)$,  $N\ge 2$ and $\gamma\in[0,1)$. Then there exist $\varepsilon_0>0$ and $b_0\in(0,1)$  such that for any $\varepsilon\in (0,\varepsilon_0)$ and $b\in(0,b_0)$, equation \eqref{1-1} has a global co-rotating solution $\theta_\varepsilon(\boldsymbol x-d\boldsymbol e_1,t)=\theta_{0,\varepsilon}(Q_{\Omega^\alpha t}(\boldsymbol x-d\boldsymbol e_1))$ centered at $(d,0)$, where $\theta_{0,\varepsilon}$ is of the form \eqref{1-2}, and $\Omega^\alpha$ satisfies
	$$\Omega^\alpha=\Omega_*^\alpha+O(\varepsilon^{1+\alpha})$$
	with $\Omega_*^\alpha$ given by
	\begin{equation}\label{1-4'}
		\Omega_*^\alpha:=\sum_{n=1}^{N-1} \frac{\alpha C_\alpha(1-\cos(\frac{2\pi n}{N}))}{2\pi\left((-1+\cos(\frac{2\pi n}{N}))^2 +\sin^2(\frac{2\pi n}{N})\right)^{1+\frac{\alpha}{2}}d^{2+\alpha}},\quad 0<\alpha<2.
	\end{equation}
	Moreover, it holds in the sense of measures
	\begin{equation*}
		\theta_{0,\varepsilon}(x)\rightarrow \sum_{n=0}^{N-1}\delta_{Q_{\frac{2n\pi}{N}}(d,0)}\ \ \text{as}\ \ \varepsilon\to 0^+.
	\end{equation*}
\end{theorem}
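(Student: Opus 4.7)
The strategy is to mimic the scheme outlined in the introduction for Theorem~\ref{thm1}, but carrying out all estimates with the $\alpha$-power kernel $K_\alpha$, whose improved smoothing (compared to the logarithm) is precisely what permits the weaker constraint $\gamma\in[0,1)$. Writing each of the $N$ doubly connected components as
\[
z_1(\varepsilon,w)=\varepsilon\bigl(1+\varepsilon^{1+\alpha}f_1(w)\bigr)^{1/2}w,\qquad z_2(\varepsilon,w)=\varepsilon b\bigl(1+\varepsilon^{1+\alpha}f_2(w)\bigr)^{1/2}w,\quad w\in\mathbb{T},
\]
with $f_1,f_2$ real, zero-mean, $N$-fold symmetric, $C^{1,\beta}$ perturbations, and plugging these into the stationarity equation in the frame rotating with angular velocity $\Omega$ about $(d,0)$, yields a pair of scalar functionals
\[
G_1(\varepsilon,\Omega,f_1,f_2)=0,\qquad G_2(\varepsilon,\Omega,f_1,f_2)=0,
\]
one per boundary component. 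A careful separation between the self-interaction of one annulus and the interaction with the other $N-1$ rotated copies (the latter being smooth down to $\varepsilon=0$ because the other centres remain at positive distance) shows that $G=(G_1,G_2)$ extends as a smooth map up to $\varepsilon=0$.

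At the base point $(\varepsilon,f_1,f_2)=(0,0,0)$, the annular patches collapse to unit point vortices at $\{Q_{2\pi n/N}(d,0)\}$, whose rigid co-rotation condition selects exactly $\Omega=\Omega_*^\alpha$ given by \eqref{1-4'}; hence $G(0,\Omega_*^\alpha,0,0)=0$. The Gateaux derivative $L:=D_{(f_1,f_2)}G(0,\Omega_*^\alpha,0,0)$ is rotationally equivariant, so in the Fourier basis $\{\cos(nNx),\sin(nNx)\}_{n\ge 1}$ it block-diagonalizes into a sequence of $2\times2$ matrices $M_n(b,\alpha)$ whose entries are explicit single integrals of $K_\alpha$ against circles of radii $1$ and $b$.

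The bulk of the analytic work is the spectral step: one must show $\det M_n(b,\alpha)\neq 0$ for every $n\ge 2$ and every $b\in(0,b_0)$. For $0<\alpha<2$ the entries reduce to incomplete Beta / ${}_2F_1$ evaluations, and a small-$b$ expansion makes the diagonal dominant, giving invertibility of $L$ on the closed subspace of $(C^{1,\beta}_N)^2$ with vanishing mean and vanishing $n=1$ Fourier coefficient. The $n=1$ obstruction is then handled by the weaker compatibility
\[
\int_{\mathbb{T}} G_1(\varepsilon,\Omega,f_1,f_2)\sin(x)\,dx \;=\;(1-\gamma)b^2\int_{\mathbb{T}} G_2(\varepsilon,\Omega,f_1,f_2)\sin(x)\,dx,
\]
which is the doubly-connected analogue of the first-Fourier cancellation in \cite{HM} highlighted in the introduction. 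A scalar implicit function theorem provides $\Omega=\Omega(\varepsilon,f_1,f_2)=\Omega_*^\alpha+O(\varepsilon^{1+\alpha})$ enforcing this identity; substituting back yields a reduced map $\widetilde G$ whose linearization at the trivial branch is an isomorphism. A second application of the implicit function theorem produces the branch $(f_1(\varepsilon),f_2(\varepsilon))=O(\varepsilon^{1+\alpha})$ together with the error expansion $\Omega^\alpha=\Omega_*^\alpha+O(\varepsilon^{1+\alpha})$. The measure convergence $\theta_{0,\varepsilon}\to \sum_n \delta_{Q_{2n\pi/N}(d,0)}$ then follows directly from the $\varepsilon^{-2}$ normalization in \eqref{1-2} together with the $C^{1,\beta}$ control on $f_j$.

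The hard part will be the spectral step: guaranteeing $\det M_n(b,\alpha)\neq 0$ for \emph{every} $n\ge 2$ uniformly in $b\in(0,b_0)$, while working with the kernel $|x|^{-\alpha}$ whose disk convolutions are hypergeometric rather than elementary. The cleanest path is likely a monotonicity-in-$n$ argument for the Riesz-potential operator on the disk combined with a quantitative small-$b$ expansion whose remainder is controlled uniformly in $\alpha$ on compact subsets of $(0,2)$. It is precisely this quantitative control that forces the restriction $b\in(0,b_0)$ with $b_0$ small, rather than $b\in(0,1)$ as in the Euler case with $\gamma\neq 0$.
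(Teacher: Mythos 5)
Your overall architecture coincides with the paper's: extend the contour functionals to $\varepsilon\le 0$, identify $\Omega_*^\alpha$ in \eqref{1-4'} from the rigid co-rotation of the $N$ point vortices, block-diagonalize the linearization at $(\varepsilon,\boldsymbol f)=(0,0)$ into $2\times2$ Fourier blocks, obtain invertibility of the blocks of index $\ge2$ for small $b$ by diagonal dominance, absorb the frequency-one obstruction through the constraint $\int G_1\sin(x)\,dx=(1-\gamma)b^2\int G_2\sin(x)\,dx$ by adjusting $\Omega=\Omega_*^\alpha+O(\varepsilon^{1+\alpha})$, and close with the implicit function theorem. The genuine gap is your functional setting for $\alpha\in[1,2)$. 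With the kernel $|\cdot|^{-\alpha}$ the contour integral diverges for $\alpha\ge1$ unless one subtracts a tangential term (as in Section 2), and the linearized operator has Fourier multiplier of size $j\Theta_j\sim j\log j$ for $\alpha=1$ and $\sim j^{\alpha}$ for $1<\alpha<2$, i.e.\ it loses $\alpha$ derivatives. It therefore cannot be an isomorphism of a fixed H\"older space $(C^{1,\beta})^2$ onto itself, nor onto the natural target of $G$: domain and target must be offset by exactly this loss, which is why the paper works with $X_b^{k}\to Y_b^{k-1}$ only for $0<\alpha<1$ and needs the adapted spaces $X_b^{k+\log}$ and $X_b^{k+\alpha-1}$ for $\alpha=1$ and $1<\alpha<2$ (Lemma \ref{lm4-4}), following \cite{Cas1}. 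As written, your claimed ``invertibility of $L$ on a closed subspace of $(C^{1,\beta}_N)^2$'' fails for $\alpha\in[1,2)$, and with it both applications of the implicit function theorem on that range of $\alpha$.

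Two further points. First, the relevant Fourier basis is $\{\cos(jx),\sin(jx)\}_{j\ge1}$ with all integer frequencies: the $N$-fold symmetry concerns the arrangement of the $N$ annular components around the origin, not the shape of each component, and the crucial obstruction sits at frequency $j=1$, which is absent from your basis $\{\cos(nNx),\sin(nNx)\}_{n\ge1}$ once $N\ge2$; your subsequent handling of ``the $n=1$ obstruction'' is the correct one but contradicts that basis choice. Second, the assertion that $K_\alpha$ has ``improved smoothing compared to the logarithm'' is backwards ($|x|^{-\alpha}$ is more singular); the actual mechanism permitting $\gamma=0$ for $\alpha>0$ is structural: the inner-boundary diagonal entry of $M_j$ contains $(\gamma-1)b^{-\alpha}\Theta_j$, which grows in $j$ even when $\gamma=0$, so $M_j^{-1}$ has the decay needed to map back into the domain space, whereas at $\alpha=0$, $\gamma=0$ that entry is $O(1)$ and the inverse loses a derivative (see the remark following Lemma \ref{lm3-5}).
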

Our next result concerns the travelling patch pairs. In this situation, the initial data $\theta_{0,\varepsilon}$ is given by
\begin{equation}\label{1-5}
	\theta_{0,\varepsilon}(\boldsymbol x)=\frac{1}{\pi(1-b^2+\gamma b^2)\varepsilon^2}(\omega_0^\varepsilon-\omega_1^\varepsilon).
\end{equation}
where $\omega_0^\varepsilon=\boldsymbol\chi_{D_{\text{out}}^\varepsilon} +(\gamma-1)\boldsymbol\chi_{D_{\text{in}}^\varepsilon}$ for some $\gamma\in [0,1]$, $b\in(0,1)$ and $\omega_1^\varepsilon$ satisfies
\begin{equation*}
	\omega_1^\varepsilon(x_1, x_2)=\omega_0^\varepsilon(-x_1+2d, x_2)
\end{equation*}with some $d>0$ fixed, and $\boldsymbol e_1$ the unit vector in $x_1$ direction.
Furthermore, $D_{\text{out}}^\varepsilon$ and  $D_{\text{in}}^\varepsilon$ are simply connected domains satisfying $\overline{D}_{\text{in}}^\varepsilon\subset D_{\text{out}}^\varepsilon $ and are perturbations of the small disks $\varepsilon B_1(0)$ and $\varepsilon B_b(0)$  respectively.
  A travelling patch pair centered at $(d,0)$ takes the form
\begin{equation*}
	\theta_\varepsilon(x,t)=\theta_{0,\varepsilon}( x-tW\boldsymbol e_2)
\end{equation*}
with $W$ some fixed speed, and $\boldsymbol e_2$ the unit vector in $x_2$ direction. Using the same method as the proof of Theorems \ref{thm1} and \ref{thm1'}, we obtain the following result.
\begin{theorem}\label{thm2}
	Suppose $\alpha\in [0,2)$, $\gamma\in \mathbb{R}\setminus\{0\}$ if $\alpha=0$ and $\gamma\in [0,1)$ if $\alpha\in(0,2)$. Then there exist $\varepsilon_0>0$ and $b\in(0,1)$  such that for any $\varepsilon\in (0,\varepsilon_0)$, equation \eqref{1-1} has a global travelling solution $\theta_\varepsilon(\boldsymbol x,t)=\theta_{0,\varepsilon}(\boldsymbol x-tW^\alpha\boldsymbol e_2)$ , where $\theta_{0,\varepsilon}$ is of the form \eqref{1-5}, and $W^\alpha$ satisfies
	$$W^\alpha=W^\alpha_*+O(|\varepsilon|^{1+\alpha})$$
	with $W^\alpha_*$ given by
	\begin{equation}\label{1-6}
		W^\alpha_*:=\begin{cases}\frac{1}{4\pi d},&\quad \alpha=0,\\ \frac{\alpha C_\alpha}{2\pi (2d)^{1+\alpha}},&\quad 0<\alpha<2.\end{cases}
	\end{equation}
	Moreover, it holds in the sense of measures
	\begin{equation*}
		\theta_{0,\varepsilon}(x)\rightarrow \delta_{(0,0)}-\delta_{(2d,0)}\ \ \text{as}\ \ \varepsilon\to 0^+.
	\end{equation*}
\end{theorem}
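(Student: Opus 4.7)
The plan is to apply the implicit function theorem strategy of Theorems~\ref{thm1} and \ref{thm1'} in the translating setting. I would first parameterize the two boundaries of the left patch (the support of $\omega_0^{\varepsilon}$, centered at the origin) in polar coordinates as
\[
R_1(\theta)=\varepsilon\sqrt{1+2\varepsilon f_1(\theta)},\qquad R_2(\theta)=\varepsilon\sqrt{b^2+2\varepsilon f_2(\theta)},
\]
with $f_1,f_2$ even in $\theta$ (reflecting the $x_2\mapsto -x_2$ symmetry of the pair), and define the right patch as the $x_1\mapsto -x_1+2d$ image of the left. The condition that $\theta_\varepsilon(\boldsymbol x,t)=\theta_{0,\varepsilon}(\boldsymbol x-tW\boldsymbol e_2)$ solves \eqref{1-1} becomes $(\mathbf u-W\boldsymbol e_2)\cdot \mathbf n=0$ on each boundary, which, after the standard extension allowing $\varepsilon\le 0$, reads as two coupled functional equations
\[
G_1(\varepsilon,W,f_1,f_2)(\theta)=0,\qquad G_2(\varepsilon,W,f_1,f_2)(\theta)=0
\]
on the circle. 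A symmetry check shows both $G_j$ are odd in $\theta$; in $C^{1,\beta}$ (for $\alpha=0$) or $H^m$ (for $\alpha\in(0,2)$) versions of the $\sin(k\theta)$ basis, $G=(G_1,G_2)$ is $C^1$ near $(0,W_*^\alpha,0,0)$. The base identity $G(0,W_*^\alpha,0,0)=0$ holds because the unperturbed radial annuli produce no self-induced normal velocity, so the condition reduces to requiring $W_*^\alpha$ to be the translating speed of a pair of opposite-signed point vortices at $(0,0)$ and $(2d,0)$, which indeed yields \eqref{1-6} via the explicit form of $K_\alpha$.

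The key step is the analysis of the G\^ateaux derivative $\mathcal L=\partial_{(f_1,f_2)}G(0,W_*^\alpha,0,0)$. By rotational symmetry of the base annulus $\mathcal L$ is diagonal in the Fourier basis and acts as a sequence of $2\times 2$ matrices $M_k$ on the $k$-th sine modes of $(f_1,f_2)$. For $k\ge 2$ one shows $\det M_k\ne 0$ under the hypotheses on $(\alpha,\gamma,b)$ by the same spectral identities that enter the analogue of Lemma~\ref{lm3-5} for the co-rotating case, the only difference being that the $N$-fold rotational contribution is replaced by the single mirror-image contribution from the opposite patch. The mode $k=1$ is always degenerate: the $\cos\theta$ perturbations of the two boundaries are invisible in the point vortex limit, so the pair $(\widehat{G_1}(1),\widehat{G_2}(1))$ lies outside the range of $\mathcal L$. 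Exactly as in Theorems~\ref{thm1} and \ref{thm1'}, the crucial observation is that only the single linear combination
\[
\int_0^{2\pi}G_1(\varepsilon,W,f_1,f_2)(\theta)\sin\theta\,d\theta-(1-\gamma)b^2\int_0^{2\pi}G_2(\varepsilon,W,f_1,f_2)(\theta)\sin\theta\,d\theta=0
\]
needs to be enforced, since the orthogonal complement of $(1,-(1-\gamma)b^2)$ in the $k=1$ mode is precisely the range of $\mathcal L$ on the admissible cosine deformations.

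This scalar constraint can be solved for $W=W(\varepsilon,f_1,f_2)=W_*^\alpha+O(|\varepsilon|^{1+\alpha})$ by a one-variable implicit function theorem, because differentiating in $W$ produces the $\boldsymbol e_2\cdot \mathbf n$ term whose mean against $\sin\theta$ is explicitly nonzero at the base point. Substituting this branch into $G$ and projecting onto the codimension-one target subspace defined by the above linear relation yields a $C^1$ map whose derivative at the base point is an isomorphism, and the classical implicit function theorem produces smooth branches $f_1(\varepsilon),f_2(\varepsilon)$ tending to $0$ as $\varepsilon\to 0^+$. The weak-measure convergence $\theta_{0,\varepsilon}\rightharpoonup \delta_{(0,0)}-\delta_{(2d,0)}$ then follows directly from the normalization $\pi(1-b^2+\gamma b^2)\varepsilon^2$ and the shrinkage of the supports. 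The principal obstacle I would expect is the spectral analysis of $\mathcal L$ on the modes $k\ge 2$ --- handling the singular Fourier multipliers arising from $K_\alpha$ in the gSQG regime and identifying the precise range of $(\gamma,b)$ for which $\det M_k\ne 0$ uniformly in $k$, which is exactly what distinguishes the admissible parameters in the Euler case ($\gamma\ne 0$) from the gSQG case ($\gamma\in[0,1)$).
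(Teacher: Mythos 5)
Your proposal follows essentially the same route as the paper: the paper's own proof of Theorem \ref{thm2} simply observes that the travelling system $H^\alpha$ in \eqref{5-1}--\eqref{5-2} coincides, up to the $W$-term and some signs, with the co-rotating system $G^\alpha$ at $N=2$, that $\partial_{\boldsymbol f} H^\alpha(0,W,0)=\partial_{\boldsymbol f} G^\alpha(0,\Omega,0)$, and then repeats the scheme of Sections 3--4 (choose $W$ so that $\int\!\!\!\!\!\!\!\!\!\; {}-{} H_1\sin(x)\,dx=(1-\gamma)b^2\int\!\!\!\!\!\!\!\!\!\; {}-{} H_2\sin(x)\,dx$, then apply the implicit function theorem with the $k=1$ mode handled exactly by the pair of spaces $X_b$, $Y_b$ as you describe). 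The one caveat is that for $1\le\alpha<2$ plain $H^m$ Sobolev spaces do not suffice, since the diagonal multipliers $\Theta_j$ grow like $\log j$ or $j^{\alpha-1}$ and force the use of $X^{k+\log}$ and $X^{k+\alpha-1}$ as in Lemma \ref{lm4-4}, but you correctly flag this loss-of-derivative issue as the remaining technical point rather than claiming it resolved.
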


\begin{remark}\label{rem2}
	If we take $\mathcal{B}$ as the set of all $b$ such that Theorem \ref{thm2} holds true. Then for $\alpha=0$, $(0,1)\setminus \mathcal{B}$ composed of at most countable many points.  Moreover, for $\gamma\in(0,1)$ it holds $(0,\frac{\sqrt 2}{2})\subset \mathcal{B}$ and if $\gamma\geq1$ then $\mathcal{B}=(0,1)$. For $\alpha\in(0,2)$, there exists $b_0\in(0, 1)$ such that $(0,b_0)\subset \mathcal{B}$.
\end{remark}

\begin{remark}\label{rem3}
	Calculating the signed curvature as the Corollary at the end of \cite{Hmi} and  Proposition 4.12 in \cite{Cas1}, one can prove that the doubly connected domain $D_0^\varepsilon$ in Theorem \ref{thm1}, \ref{thm1'} or \ref{thm2} is the difference of two convex sets. We also expect that the regularity of the boundaries can be improved to be  analytic using related spaces defined in \cite{Cas4}. Some other interesting prosperities for doubly connected vortices with single component have been investigated in \cite{Hmi1}.
\end{remark}

This paper is organized as follows. In section 2, we derive the contour dynamics equation and define the function spaces used later. In Section 3, we show the existence of $N$-fold symmetric co-rotating patch solutions for $\alpha=0$.  Section 4 is devoted to the existence of $N$-fold symmetric co-rotating patch solutions with  doubly connected components for $0<\alpha<2$. In Section 5, we sketch the proof of existence of travelling patch pair solutions with  doubly connected components.

\section{The contour dynamics equation and function spaces}

The main tool used in this paper is the contour dynamics equation for $\alpha$-patch. An $\alpha$-patch is a solution of \eqref{1-1} whose initial data is given by $\theta_0=\kappa \boldsymbol \chi_{D}$, with $\kappa\in \mathbb{R}$, $D\subset \mathbb{R}^2$ a bounded domain and $\boldsymbol \chi_{D}$ its characteristic function. Due to the transport formula $\partial_t\theta+\mathbf{u}\cdot \nabla \theta =0$, the solution will preserve its patch structure and can be written as $\theta_t=\kappa\boldsymbol \chi_{D_t}$. When $0\leq\alpha<1$, by using Biot-Savart law and Green-Stokes formula, the velocity can be recovered by
\begin{equation*}
	\boldsymbol u(\boldsymbol x,t)=\begin{cases}\frac{\kappa}{2\pi}\int_{\partial D_t}\log\left(\frac{1}{|\boldsymbol x-\boldsymbol y|}\right)d\boldsymbol y,&\quad \alpha=0,\\
\vspace{0.1cm}\\
\frac{\kappa C_\alpha}{2\pi}\int_{\partial D_t}\frac{1}{|\boldsymbol x-\boldsymbol y|^\alpha}d\boldsymbol y, &\quad 0<\alpha<1.\end{cases}
\end{equation*}
Thus if the patch boundary $\partial D_t$ is parameterized as $\boldsymbol z(t,\sigma)$ with $\sigma\in [0,2\pi)$, then $\boldsymbol z(t,\sigma)$ satisfies
\begin{equation*}
	\partial_t \boldsymbol z(t,\sigma)=\begin{cases} \frac{\kappa}{2\pi}\int_0^{2\pi}\log\left(\frac{1}{|\boldsymbol z(t,\sigma)-\boldsymbol z(t,\tau)|}\right)\partial_\tau\boldsymbol z(t,\tau)d\tau,&\quad \alpha=0,\\
\vspace{0.1cm}\\
\frac{\kappa C_\alpha}{2\pi}\int_0^{2\pi}\frac{\partial_\tau\boldsymbol z(t,\tau)}{|\boldsymbol z(t,\sigma)-\boldsymbol z(t,\tau)|^\alpha}d\tau&\quad 0<\alpha<1, \end{cases}
\end{equation*}
which is known as the contour dynamics equation. This equation is locally well-posed if the boundary of the initial $\alpha$-patch is composed with finite number of disjoint smooth Jordan curves.
For $1\leq \alpha<2$ the above integral is divergent.  To eliminate the singularity, one can change the velocity at the boundary by subtracting a tangential vector and define
\begin{equation*}
	\partial_t \boldsymbol z(t,\sigma)= \frac{\kappa C_\alpha}{2\pi}\int_0^{2\pi}\frac{\partial_\tau\boldsymbol z(t,\tau)-\partial_\sigma\boldsymbol z(t,\sigma)}{|\boldsymbol z(t,\sigma)-\boldsymbol z(t,\tau)|^\alpha}d\tau.
\end{equation*}

The interfaces of the  $\alpha$-patch under our consideration can be parameterized as two $2\pi$ periodic curves $\boldsymbol z_i(x)$ with $i=1,2$ and $x\in [0, 2\pi)$. For $0<b<1$, we assume that $\boldsymbol z_i$ are of the following form:
$$\boldsymbol z_i(x,t)=\left(\varepsilon R_i(x,t)\cos(x), \varepsilon R_i(x,t)\sin(x)\right),$$
with
$$R_1(x,t)=1+\varepsilon^{1+\alpha}f_1(x,t),$$
$$R_2(x,t)=b+\varepsilon^{1+\alpha} f_2(x,t).$$
For simplicity, let
$$\int\!\!\!\!\!\!\!\!\!\; {}-{} g(\tau)d\tau:=\frac{1}{2\pi}\int_0^{2\pi}g(\tau)d\tau$$
be the mean value of integral on the unit circle and denote $\boldsymbol{f}=(f_1,f_2)$.

For co-rotating patches
\begin{equation*}
	\theta_\varepsilon(\boldsymbol x,t)=\theta_{0,\varepsilon}\left(Q_{\Omega t}(\boldsymbol x-d\boldsymbol e_1)+d\boldsymbol e_1\right), \quad \boldsymbol x\in \mathbb{R}^2
\end{equation*}
with $\Omega$ some uniform angular velocity. Inserting this equality into \eqref{1-1}, we derive
\begin{equation*}
	\left(\mathbf{u}_0(\boldsymbol x)+\Omega(\boldsymbol x-d\boldsymbol e_1)^\perp\right)\cdot \nabla \theta_{0,\varepsilon}(\boldsymbol x)=0.
\end{equation*}
Then we  use the patch structure to obtain
\begin{equation*}
	\left(\mathbf{u}_0(\boldsymbol x)+\Omega(\boldsymbol x-d\boldsymbol e_1)^\perp\right)\cdot \mathbf n(\boldsymbol x)=0, \ \ \ \forall \, \boldsymbol x \in \cup_{n=0}^{N-1}\partial D^\varepsilon_n,
\end{equation*}
where $D_n^\varepsilon$ is the rotation of annuler $D_{out}^\varepsilon \setminus \overline{D}_{in}^\varepsilon$ and $\mathbf n(\boldsymbol x)$ is the normal vector to the boundary. According to Biot-Savart law and Green-Stokes formula, the interfaces satisfies the following equation for $\alpha=0$
\begin{align}\label{2-1}
	0&= G_i^0(\varepsilon, \Omega, \boldsymbol{f})\\
	&=\pi(1-b^2+\gamma b^2)\Omega\left(\varepsilon R_i'(x)-\frac{dR_i'(x)\cos(x)}{R_i(x)}+d\sin(x)\right)\nonumber\\
	&+\frac{1}{2 R_i(x)\varepsilon}\int\!\!\!\!\!\!\!\!\!\; {}-{} \log\left(\frac{1}{ \left(R_i(x)-R_1(y)\right)^2+4R_i(x)R_1(y)\sin^2\left(\frac{x-y}{2}\right)}\right)\nonumber\\
	&\quad\times \left[(R_i(x)R_1(y)+R'_i(x)R'_1(y))\sin(x-y)+(R_i(x)R'_1(y)-R'_i(x)R_1(y))\cos(x-y)\right] dy\nonumber\\
	&+ \frac{\gamma-1}{ 2 R_i(x)\varepsilon}\int\!\!\!\!\!\!\!\!\!\; {}-{}\log\left(\frac{1}{ \left(R_i(x)-R_2(y)\right)^2+4R_i(x)R_2(y)\sin^2\left(\frac{x-y}{2}\right)}\right)\nonumber\\
	&\quad\times \left[(R_i(x)R_2(y)+R'_i(x)R'_2(y))\sin(x-y)+(R_i(x)R'_2(y)-R'_i(x)R_2(y))\cos(x-y)\right]dy\nonumber\\
	&+\sum_{n=1}^{N-1} \frac{1}{ R_i(x) \varepsilon} \int\!\!\!\!\!\!\!\!\!\; {}-{}\log\left( \frac{1}{\left| \boldsymbol z_i(x)-d\boldsymbol e_1- Q_{\frac{2\pi n}{N}} \left(\boldsymbol z_1(y)-d\boldsymbol e_1\right)\right|}\right)\nonumber\\
	&\quad \times\left[(R_i(x)R_1(y)+R_i'(x)R_1'(y))\sin(x-y-\frac{2\pi n}{N})\right.\nonumber\\
	&\qquad  \left.+(R_i(x)R_1'(y)-R_i'(x)R_1(y))\cos(x-y-\frac{2\pi n}{N})\right] dy\nonumber\\
	&+\sum_{n=1}^{N-1} \frac{\gamma-1}{ R_i(x) \varepsilon} \int\!\!\!\!\!\!\!\!\!\; {}-{}\log\left( \frac{1}{\left| \boldsymbol z_i(x)-d\boldsymbol e_1- Q_{\frac{2\pi n}{N}} \left(\boldsymbol z_2(y)-d\boldsymbol e_1\right)\right|}\right)\nonumber\\
	&\quad \times\left[(R_i(x)R_2(y)+R_i'(x)R_2'(y))\sin(x-y-\frac{2\pi n}{N})\right.\nonumber\\
	&\qquad\,\left.+(R_i(x)R_2'(y)-R_i'(x)R_2(y))\cos(x-y-\frac{2\pi n}{N})\right] dy\nonumber\\
	&=G_{i1}+G_{i2}+G_{i3}+G_{i4}+G_{i5}.\nonumber
\end{align}

When $0< \alpha<2$, the interfaces obey the following equation
\begin{align}\label{2-2}
	0&= G_i^\alpha(\varepsilon, \Omega, \boldsymbol{f})\\
	&=\pi(1-b^2+\gamma b^2)\Omega\left(\varepsilon R_i'(x)-\frac{dR_i'(x)\cos(x)}{R_i(x)}+d\sin(x)\right)\nonumber\\
	&+\frac{C_\alpha}{R_i(x)\varepsilon|\varepsilon|^\alpha}\int\!\!\!\!\!\!\!\!\!\; {}-{} \frac{1}{\left| \left(R_i(x)-R_1(y)\right)^2+4R_i(x)R_1(y)\sin^2\left(\frac{x-y}{2}\right)\right|^{\frac{\alpha}{2}}}\nonumber\\
	&\quad \times \left[(R_i(x)R_1(y)+R'_i(x)R'_1(y))\sin(x-y)+(R_i(x)R'_1(y)-R'_i(x)R_1(y)\cos(x-y))\right]dy\nonumber\\
	& +\frac{(\gamma-1)C_\alpha}{ R_i(x)\varepsilon|\varepsilon|^\alpha}\int\!\!\!\!\!\!\!\!\!\; {}-{}\frac{1}{\left| \left(R_i(x)-R_2(y)\right)^2+4R_i(x)R_2(y)\sin^2\left(\frac{x-y}{2}\right)\right|^{\frac{\alpha}{2}}}\nonumber\\
	&\quad\times \left[(R_i(x)R_2(y)+R'_i(x)R'_2(y))\sin(x-y)+(R_i(x)R'_2(y)-R'_i(x)R_2(y))\cos(x-y)\right]dy\nonumber\\
	&+\sum_{n=1}^{N-1} \frac{C_\alpha}{ R_i(x) \varepsilon} \int\!\!\!\!\!\!\!\!\!\; {}-{} \frac{1}{\left| \boldsymbol z_i(x)-d\boldsymbol e_1- Q_{\frac{2\pi n}{N}} \left(\boldsymbol z_1(y)-d\boldsymbol e_1\right)\right|^{\alpha}}\nonumber\\
	&\quad\times\left[(R_i(x)R_1(y)+R_i'(x)R_1'(y))\sin(x-y-\frac{2\pi n}{N})\right.\nonumber\\
	&\qquad \left. +(R_i(x)R_1'(y)-R_i'(x)R_1(y))\cos(x-y-\frac{2\pi n}{N})\right] dy\nonumber\\
	&+\sum_{n=1}^{N-1} \frac{(\gamma-1)C_\alpha}{ R_i(x) \varepsilon} {\int\!\!\!\!\!\!\!\!\!\; {}-{}} \frac{1}{\left| \boldsymbol z_i(x)-d\boldsymbol e_1- Q_{\frac{2\pi n}{N}} \left(\boldsymbol z_2(y)-d\boldsymbol e_1\right)\right|^{\alpha}}\nonumber\\
	&\quad\times\left[(R_i(x)R_2(y)+R_i'(x)R_2'(y))\sin(x-y-\frac{2\pi n}{N})\right.\nonumber\\
	&\qquad\left. +(R_i(x)R_2'(y)-R_i'(x)R_2(y))\cos(x-y-\frac{2\pi n}{N})\right] dy\nonumber
\end{align}

For $0\leq\alpha<1$, the function spaces used in the current paper are
\begin{equation*}
	X^k=\left\{ g\in H^k, \ g(x)= \sum\limits_{j=1}^{\infty}a_j\cos(jx)\right\},
\end{equation*}

$$X_b^{k}:=\Bigg\{(f_1, f_2)\in X^k\times X^k \Big| \int\!\!\!\!\!\!\!\!\!\; {}-{} f_1(x)\cos(x)dx=(1-\gamma)b^2 \int\!\!\!\!\!\!\!\!\!\; {}-{} f_2(x)\cos(x)dx\Bigg\},$$

\begin{equation*}
	Y^{k}=\left\{ g\in H^{k}, \ g(x)= \sum\limits_{j=1}^{\infty}a_j\sin(jx)\right\},
\end{equation*}
and
$$Y_b^{k}:=\Bigg\{(g_1, g_2)\in Y^{k}\times Y^{k}\Big| \int\!\!\!\!\!\!\!\!\!\; {}-{} g_1(x)\sin(x)dx=(1-\gamma)b^2 \int\!\!\!\!\!\!\!\!\!\; {}-{} g_2(x)\sin(x)dx\Bigg\}.$$
The following spaces are also in order for the case $\alpha=1$
\begin{equation*}
	\begin{split}
		X^{k+\log}=&\left\{ g\in H^k, \ g(x)= \sum\limits_{j=2}^{\infty}a_j\cos(jx), \ \left\|\int_0^{2\pi}\frac{\partial^kg(x-y)-\partial^kg(x)}{|\sin(\frac{y}{2})|}dy\right\|_{L^2}<\infty \right\},\\
	\end{split}
\end{equation*}
and
$$X_b^{k+\log}:=\Bigg\{(f_1, f_2)\in X^{k+\log}\times X^{k+\log} \Big| \int\!\!\!\!\!\!\!\!\!\; {}-{} f_1(x)\cos(x)dx=(1-\gamma)b^2 \int\!\!\!\!\!\!\!\!\!\; {}-{} f_2(x)\cos(x)dx\Bigg\}.$$

For $1<\alpha<2$, we will need the following fractional spaces
\begin{equation*}
	\begin{split}
		X^{k+\alpha-1}=&\left\{ g\in H^k, \ g(x)= \sum\limits_{j=2}^{\infty}a_j\cos(jx), \ \left\|\int_0^{2\pi}\frac{\partial^kg(x-y)-\partial^kg(x)}{|\sin(\frac{y}{2})|^\alpha}dy\right\|_{L^2}<\infty \right\},\\
	\end{split}
\end{equation*}

and
$$X_b^{k+\alpha-1}:=\Bigg\{(f_1, f_2)\in X^{k+\alpha-1}\times X^{k+\alpha-1} \Big| \int\!\!\!\!\!\!\!\!\!\; {}-{} f_1(x)\cos(x)dx=(1-\gamma)b^2 \int\!\!\!\!\!\!\!\!\!\; {}-{} f_2(x)\cos(x)dx\Bigg\}.$$
The norm for $X^k$ and $Y^{k}$ is the $H^k$-norm. For $X^{k+\log}$ and $X^{k+\alpha-1}$, the norms are given by the sum of the $H^k$-norm and the integral in their definitions respectively.
Note that for every $\beta>0$, the embedding $X^{k+\beta}\subset X^{k+\log}\subset X^{k}$ holds.

Throughout this paper, we always assume $k\geq 3$.

\section{Existence of co-rotating solutions: the case $\alpha=0$}

We will split up our proof into several lemmas, which correspond to checking the hypotheses of the implicit function theorem.
Set $V:=\{f\in X_b^k\mid ||g||_{H^k}<1\}$. In what follows, $\varepsilon_0$ denotes a small positive number determined in the process of proof.

We first prove that $G^0_i$ is continuous and can be extended to $\varepsilon\leq 0$.
\begin{lemma}\label{lm3-1}
	$G_i^0(\varepsilon, \Omega, \boldsymbol f): \left(-\varepsilon_0, \varepsilon_0\right)\times \mathbb{R} \times V \rightarrow Y^{k-1}$ is continuous for $i=1,2$.
\end{lemma}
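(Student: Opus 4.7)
The plan is to split $G_i^0 = G_{i1} + G_{i2} + G_{i3} + G_{i4} + G_{i5}$ as in (2.1) and, for each summand, verify (i) that the apparent $\varepsilon^{-1}$ prefactor is cancelled by a vanishing leading-order integrand so the summand extends continuously across $\varepsilon = 0$; (ii) that the result lies in $Y^{k-1}$; and (iii) that the dependence on $(\varepsilon, \Omega, \boldsymbol f)$ is continuous. The oddness in $x$ throughout is a consequence of the reflection symmetry $x_2 \mapsto -x_2$: since $f_i \in X^k$ is a cosine series, $R_i$ is even and $R_i'$ is odd in $x$, and combining $x \mapsto -x$, $y \mapsto -y$ with the relabeling $n \mapsto N - n$ in the sums flips the sign of each integrand. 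The term $G_{i1}$ is then manifestly in $Y^{k-1}$, and continuity in $(\varepsilon, \Omega, \boldsymbol f)$ is immediate since $R_i$ stays bounded below on $V$.

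For the pairwise-interaction summands $G_{i4}, G_{i5}$, the denominator is bounded below by a positive constant uniformly in $(x, y, \boldsymbol f)$ for $\varepsilon < \varepsilon_0$ small, since $Q_{2\pi n/N}$ moves the $n$-th vortex far from the $0$-th. Hence the log kernel is jointly smooth, and a Taylor expansion in $\varepsilon$ yields
\begin{equation*}
\log\frac{1}{\bigl|\boldsymbol z_i(x) - d\boldsymbol e_1 - Q_{\frac{2\pi n}{N}}(\boldsymbol z_j(y) - d\boldsymbol e_1)\bigr|} = -\log\bigl|d\boldsymbol e_1 - Q_{\frac{2\pi n}{N}} d\boldsymbol e_1\bigr| + O(\varepsilon),
\end{equation*}
whose leading term is constant in $(x,y)$. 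The bracket at $\varepsilon = 0$ reduces to $R_i(x)R_j(y)\sin(x-y-\tfrac{2\pi n}{N})$ (the cosine term vanishes because $R_i'=R_j'=0$), and its $y$-average over $[0,2\pi)$ is zero. Thus the surviving $O(\varepsilon)$ remainder, divided by $\varepsilon$, yields a smooth function of $(x, \varepsilon, \Omega, \boldsymbol f)$ with the required regularity.

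For the self-interaction summands $G_{i2}, G_{i3}$ the kernel is singular on the diagonal $\{y = x\}$, but the same cancellation operates in integrated form. Changing variables $y \mapsto x - y$, at $\varepsilon = 0$ the log becomes $-\log\bigl((R_i - R_j)^2 + 4R_iR_j\sin^2(y/2)\bigr)$, which is even in $y$, while the bracket reduces to a multiple of $\sin y$, which is odd; their product therefore integrates to zero, supplying the required cancellation of $\varepsilon^{-1}$. For the $H^{k-1}$ regularity I would split the log into the principal part $-\log(4\sin^2(y/2))$ and a smooth remainder, then apply the standard mapping property of the associated convolution-type operator on the circle, which loses exactly one derivative.

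Joint continuity follows from dominated convergence applied to the Taylor-expanded integrands, using the uniform $H^k$-bound on $\boldsymbol f \in V$ to obtain the required domination. The main technical obstacle I expect is combining the symmetry-induced cancellation in $G_{i2}, G_{i3}$ with sharp Sobolev estimates for the singular-log operator: one must verify that the $\varepsilon^{-1}$ prefactor is absorbed uniformly at the $H^{k-1}$ level, not merely pointwise, and that the derivative loss stays within $k-1$ despite the diagonal logarithmic singularity of the kernel.
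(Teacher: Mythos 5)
Your outline follows the same route as the paper: the same five-term splitting of $G_i^0$, Taylor expansion of the kernels in $\varepsilon$ via \eqref{3-3}--\eqref{3-4}, cancellation of the $1/\varepsilon$ prefactor because the leading-order kernel (even in $y$, or constant for the rotated copies) integrates against an odd bracket, explicit treatment of $G_{i4},G_{i5}$ using that their denominators stay bounded below, and parity to land in $Y^{k-1}$ (your reflection argument with the relabeling $n\mapsto N-n$ is a clean global version of the paper's change of variables). One small slip: $G_{13}$ and $G_{22}$ are \emph{not} diagonal-singular — at $\varepsilon=0$ their kernels equal $\log\frac{1}{(1-b)^2+4b\sin^2(\frac{y}{2})}$, bounded above and below, which is precisely why the paper treats them as more regular than $G_{12}$; only $G_{12}$ and $G_{23}$ carry the diagonal singularity. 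This misidentification is harmless, since your argument still applies.

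The genuine gap is the one you flag yourself and then defer: for the singular terms, "principal part $-\log\bigl(4\sin^2(y/2)\bigr)$ plus a smooth remainder, then a standard mapping property of the convolution operator" does not carry the load. The remainder kernel $\log\frac{4\sin^2((x-y)/2)}{(R_1(x)-R_1(y))^2+4R_1(x)R_1(y)\sin^2((x-y)/2)}$ is not smooth and is not a convolution: it depends on $f_1$ through $R_1(x),R_1(y)$ and has only the regularity $f_1$ provides. The crux of Lemma \ref{lm3-1} — the $H^{k-1}$ bound uniform as $\varepsilon\to0$ and the continuity in $\boldsymbol f$ — is exactly the estimate of $\partial^{k-1}$ of these $f$-dependent singular integrals, which the paper does by hand: it splits $R_1(x)R'_1(y)-R'_1(x)R_1(y)=R_1(x)(R'_1(y)-R'_1(x))+R'_1(x)(R_1(x)-R_1(y))$, lets at most $k-1$ derivatives fall either on $f_1'$ or on the kernel, and compensates each differentiated kernel (of size $|x-y|^{-1}$ or $|x-y|^{-2}$) by mean-value bounds such as $|(f_1(x)-f_1(y))(\partial^{k-1}f_1(x)-\partial^{k-1}f_1(y))|\le C|x-y|\,(|\partial^{k-1}f_1(x)|+|\partial^{k-1}f_1(y)|)$, closing with H\"older; continuity in $\boldsymbol f$ is then proved by the same scheme applied to differences of the kernels (the terms $I_1,I_2,I_{21},I_{22}$ in the paper). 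Dominated convergence alone does not give convergence in the $H^{k-1}$ norm. Until these derivative-level estimates are written out, the heart of the lemma is asserted rather than proved; with them supplied, your plan matches the paper's proof.
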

\begin{proof}
	We only prove the continuity of $G_1^0$.
	
	For the first term $G_{11}$, it is easy to show that $G_{11}: \left(-\frac{1}{2}, \frac{1}{2}\right)\times \mathbb{R} \times V \rightarrow Y^{k-1}$ is continuous by defining $G_{11}(0, \Om, \boldsymbol{f})=\pi(1-b^2+\gamma b^2)\Om d\sin(x)$. And we can rewrite
	$G_{11}$ as
	\begin{equation}\label{3-1}
		G_{11}=\pi(1-b^2+\gamma b^2)\Omega \left(d\sin(x)+\varepsilon\mathcal{R}_{11}(\varepsilon, f_1)\right),
	\end{equation}
	where $\mathcal{R}_{11}(\varepsilon,\Om, f_1): \left(-\frac{1}{2}, \frac{1}{2}\right)\times \mathbb{R} \times X^k \rightarrow Y^{k-1}$ is continuous.
	
	Now, consider the term $G_{12}$. We will prove that $\partial^l G_{12}\in L^2$ for $l=0,1, \ldots, k-1$. Since $R_1(x)=1+\varepsilon f_1(x)$, the possible singularity caused by $\varepsilon=0$ may occur only when we take zeroth order derivative of $G_{12}$. Thus, we first show that $G_{12}\in L^2$.

	Firstly, we consider the terms in $G_{12}$,
	\begin{align*}
		&\quad\frac{1}{2 R_1(x)\varepsilon}\int\!\!\!\!\!\!\!\!\!\; {}-{} \log\left(\frac{1}{ \left(R_1(x)-R_1(y)\right)^2+4R_1(x)R_1(y)\sin^2\left(\frac{x-y}{2}\right)}\right)\nonumber\\
		&\qquad\times \left[R'_1(x)R'_1(y)\sin(x-y)+(R_1(x)R'_1(y)-R'_1(x)R_1(y))\cos(x-y)\right] dy\nonumber\\
		&=\frac{1}{2 R_1(x)}\int\!\!\!\!\!\!\!\!\!\; {}-{} \log\left(\frac{1}{ \left(R_1(x)-R_1(y)\right)^2+4R_1(x)R_1(y)\sin^2\left(\frac{x-y}{2}\right)}\right)\nonumber\\
		&\qquad\times \left[\varepsilon f'_1(x)f'_1(y)\sin(x-y)+(R_1(x)f'_1(y)-f'_1(x)R_1(y))\cos(x-y)\right] dy\nonumber\\
		&=\frac{1}{2}\int\!\!\!\!\!\!\!\!\!\; {}-{} \log\left(\frac{1}{ \left(R_1(x)-R_1(y)\right)^2+4R_1(x)R_1(y)\sin^2\left(\frac{x-y}{2}\right)}\right)(f_1'(y)-f_1'(x))\cos(x-y)dy\\
		&+\frac{\varepsilon}{2 R_1(x)}\int\!\!\!\!\!\!\!\!\!\; {}-{} \log\left(\frac{1}{ \left(R_1(x)-R_1(y)\right)^2+4R_1(x)R_1(y)\sin^2\left(\frac{x-y}{2}\right)}\right)\\
		&\quad \times\left[ f'_1(x)f'_1(y)\sin(x-y)+f'_1(x)(f_1(x)-f_1(y))\cos(x-y)\right] dy.
	\end{align*}
		Since $c|x-y|^2\leq \left|\left(R_1(x)-R_1(y)\right)^2+4R_1(x)R_1(y)\sin^2\left(\frac{x-y}{2}\right)\right|\leq C|x-y|^2$ for some positive constants $c$ and $C$, we deduce that the kernel
	\begin{equation}\label{3-2}
\log\left(\frac{1}{ \left(R_1(x)-R_1(y)\right)^2+4R_1(x)R_1(y)\sin^2\left(\frac{x-y}{2}\right)}\right)\in L^1.
	\end{equation}
Hence, these terms belong to $L^2$.
	
	 To prove that the remaining term in $G_{12}$ belongs to $L^2$, we will use the following Taylor's formula:
	\begin{equation}\label{3-3}
		\log\left(\frac{1}{A+B}\right)=\log\left(\frac{1}{A}\right)-\int_0^1\frac{B}{A+tB}dt.
	\end{equation}and	
    \begin{equation}\label{3-4}
	\frac{1}{(A+B)^\beta}=\frac{1}{A^\beta}-\beta\int_0^1\frac{B}{(A+tB)^{1+\beta}}dt.
    \end{equation}
	Denote
	$$A:=4\sin^2\left(\frac{x-y}{2}\right),$$
	and $$B:=\varepsilon(f_1(x)-f_1(y))^2+\sin^2\left(\frac{x-y}{2}\right)\left(4f_1(x)+4f_1(y)+\varepsilon f_1(x)f_1(y)\right).$$
	Then, there holds $$\left(R_1(x)-R_1(y)\right)^2+4R_1(x)R_1(y)\sin^2\left(\frac{x-y}{2}\right)=A+\varepsilon B.$$
	
	Using \eqref{3-3}, \eqref{3-4} and the fact that $\sin(\cdot)$ is an odd function, we have
	\begin{equation*}
		\begin{split}
			&\quad\frac{1}{\varepsilon}\int\!\!\!\!\!\!\!\!\!\; {}-{}\log\left( \frac{1}{A+\varepsilon B}\right)\sin(x-y)dy+\int\!\!\!\!\!\!\!\!\!\; {}-{}\log\left(\frac{1}{A+\varepsilon B}\right)  f_1(y)\sin(x-y)dy\\
			&=\frac{1}{\varepsilon}\int\!\!\!\!\!\!\!\!\!\; {}-{} \log\left(\frac{1}{A}\right)\sin(x-y)dy-\frac{1}{\varepsilon}\int\!\!\!\!\!\!\!\!\!\; {}-{}\int_0^1\frac{\varepsilon B}{A+t\varepsilon B}dt\sin(x-y)dy\nonumber\\
			&\quad+\int\!\!\!\!\!\!\!\!\!\; {}-{} \log\left(\frac{1}{A}\right)f(y)\sin(x-y)dy-\int\!\!\!\!\!\!\!\!\!\; {}-{}\int_0^1\frac{\varepsilon B}{A+t\varepsilon B}dtf_1(y)\sin(x-y)dy\nonumber\\
			&=-\int\!\!\!\!\!\!\!\!\!\; {}-{}\int_0^1\frac{B}{A+t\varepsilon B}dt\sin(x-y)dy+\int\!\!\!\!\!\!\!\!\!\; {}-{} \log\left(\frac{1}{A}\right)f_1(y)\sin(x-y)dy\nonumber\\
			&\quad-\varepsilon\int\!\!\!\!\!\!\!\!\!\; {}-{}\int_0^1\frac{ B}{A+t\varepsilon B}dtf(y)\sin(x-y)dy\nonumber\\
			&=-\int\!\!\!\!\!\!\!\!\!\; {}-{}\int_0^1\frac{B}{A}dt\sin(x-y)dy+\int\!\!\!\!\!\!\!\!\!\; {}-{}\int_0^1\int_0^1\frac{t\varepsilon B^2}{(A+t\tau \varepsilon B)^2}d\tau dt\sin(x-y)dy\nonumber\\
			&\quad +\int\!\!\!\!\!\!\!\!\!\; {}-{} \log\left(\frac{1}{A}\right)f(y)\sin(x-y)dy-\varepsilon\int\!\!\!\!\!\!\!\!\!\; {}-{}\int_0^1\frac{ B}{A+t\varepsilon B}dtf_1(y)\sin(x-y)dy\nonumber\\
			&=-\int\!\!\!\!\!\!\!\!\!\; {}-{}f_1(y)\sin(x-y)dy+\int\!\!\!\!\!\!\!\!\!\; {}-{} \log\left(\frac{1}{A}\right)f_1(y)\sin(x-y)dy+\varepsilon \mathcal{R}_{121}(\varepsilon, f_1),\nonumber
		\end{split}
	\end{equation*}
	where $\mathcal{R}_{121}(\varepsilon,f_1)$ is not singular with respect to $\varepsilon$ and belongs to $L^2$ due to $|B|\leq C|x-y|^2$. In view of \eqref{3-2}, it is easy to see $\int\!\!\!\!\!\!\!\!\!\; {}-{} \log\left(\frac{1}{A}\right)f_1(y)\sin(x-y)dy\in L^2$, and hence $G_{12}\in L^2$ for all $\varepsilon\not=0$.
	
	Applying \eqref{3-3} to $\frac{1}{2}\int\!\!\!\!\!\!\!\!\!\; {}-{} \log\left(\frac{1}{ \left(R_1(x)-R_1(y)\right)^2+4R_1(x)R_1(y)\sin^2\left(\frac{x-y}{2}\right)}\right)(f_1'(y)-f_1'(x))\cos(x-y)dy$ we obtain
	\begin{align*}
		&\quad \frac{1}{2}\int\!\!\!\!\!\!\!\!\!\; {}-{} \log\left(\frac{1}{ \left(R_1(x)-R_1(y)\right)^2+4R_1(x)R_1(y)\sin^2\left(\frac{x-y}{2}\right)}\right)(f_1'(y)-f_1'(x))\cos(x-y)dy\\
		&=\frac{1}{2}\int\!\!\!\!\!\!\!\!\!\; {}-{} \log\left(\frac{1}{A+\varepsilon B}\right)(f_1'(y)-f_1'(x))\cos(x-y)dy\\
		&=\frac{1}{2}\int\!\!\!\!\!\!\!\!\!\; {}-{} \log\left(\frac{1}{A}\right)(f_1'(y)-f_1'(x))\cos(x-y)dy\\
		&\quad-\frac{1}{2}\int\!\!\!\!\!\!\!\!\!\; {}-{}\int_0^1\frac{\varepsilon B}{A+t\varepsilon B}dt(f_1'(y)-f_1'(x))\cos(x-y)dy\nonumber\\
		&=\frac{1}{2}\int\!\!\!\!\!\!\!\!\!\; {}-{} \log\left(\frac{1}{A}\right)(f_1'(y)-f_1'(x))\cos(x-y)dy+\varepsilon \mathcal{R}_{122}(\varepsilon,f_1).
	\end{align*}

	 Define
	\begin{align*}
		G_{12}(0, f_1)&=\frac{1}{2}\int\!\!\!\!\!\!\!\!\!\; {}-{} \log\left(\frac{1}{A}\right)\left(f_1(y)\sin(x-y)+(f_1'(y)-f_1'(x))\cos(x-y)\right)dy\\
		&\quad-\frac{1}{2}\int\!\!\!\!\!\!\!\!\!\; {}-{}f_1(y)\sin(x-y)dy\\
		&=\frac{1}{2}\int\!\!\!\!\!\!\!\!\!\; {}-{} \log\left(\frac{1}{4\sin^2\left(\frac{x-y}{2}\right)}\right)\left(f_1(y)\sin(x-y)+(f_1'(y)-f_1'(x))\cos(x-y)\right)dy\\
		&\quad -\frac{1}{2}\int\!\!\!\!\!\!\!\!\!\; {}-{}f_1(y)\sin(x-y)dy\\
		&=\frac{1}{2}\int\!\!\!\!\!\!\!\!\!\; {}-{} \log\left(\frac{1}{\sin^2\left(\frac{y}{2}\right)}\right)\left(f_1(x-y)\sin(y)+(f_1'(x-y)-f_1'(x))\cos(y)\right)dy\\
		&\quad -\frac{1}{2}\int\!\!\!\!\!\!\!\!\!\; {}-{}f_1(y)\sin(x-y)dy.
	\end{align*}   Then $G_{12}\in L^2$ for all $(\varepsilon, \boldsymbol f)\in \left(-\frac{1}{2}, \frac{1}{2}\right)\times  V $.
	
	We are going to show that $\partial^{k-1}G_{12}\in L^2$.
	Splitting the term  $R_1(x)R'_1(y)-R'_1(x)R_1(y)$ into $R_1(x)(R'_1(y)-R'_1(x))+R'_1(x)(R_1(x)-R_1(y))$ and taking $(k-1)$th derivatives of $G_{12}$,  the most singular term is 	
	\begin{equation*}
		\begin{split}
			&-\frac{\partial^{k-1}f_1(x)}{R_1(x)} \varepsilon G_{12}\\
			&+\int\!\!\!\!\!\!\!\!\!\; {}-{}\log\left(\frac{1}{ \left(R_1(x)-R_1(y)\right)^2+4R_1(x)R_1(y)\sin^2\left(\frac{x-y}{2}\right)}\right)\\
			&\qquad\times \left((\partial^k f(x)  f'(y)+f'(x)\partial^k f(y))+(\partial^kf(y)-\partial^kf(x))\cos(x-y)\right)dy\\	
			& -\int\!\!\!\!\!\!\!\!\!\; {}-{}\frac{(f(x)-f(y))(\partial^{k-1}f(x)-\partial^{k-1}f(y))+2(f(x)\partial^{k-1}f(y)+\partial^{k-1}f(x)f(y))\sin^2(\frac{x-y}{2})}{\left(R_1(x)-R_1(y)\right)^2+4R_1(x)R_1(y)\sin^2\left(\frac{x-y}{2}\right)}\\
			& \quad\times \left[(R_1(x)R_1(y)+R'_1(x)R'_1(y))\sin(x-y)+(R_1(x)R'_1(y)-R'_1(x)R_1(y))\cos(x-y)\right] dy\nonumber\\
			& -\int\!\!\!\!\!\!\!\!\!\; {}-{}\frac{(f(x)-f(y))(f'(x)-f'(y))+2(f(x)f'(y)+f'(x)f(y))\sin^2(\frac{x-y}{2})}{\left(R_1(x)-R_1(y)\right)^2+4R_1(x)R_1(y)\sin^2\left(\frac{x-y}{2}\right)}\\
			& \quad\times \left[\varepsilon(\partial^{k-1}f_1(x)R'_1(y)+f_1'(x)\partial^{k-1}f_1(y))\sin(x-y)\right.\\
			&\qquad\left.+(R_1(x)\partial^{k-1}f_1(y)-\partial^{k-1}f_1(x)R_1(y))\cos(x-y)\right] dy.\nonumber
		\end{split}
	\end{equation*}
 Since $f\in H^k$ and $k\ge3$, we have $\|\partial^lf\|_{L^\infty}<C \|f\|_{H^k}$ for $l=0,1,2$. By  mean value theorem, we can deduce that
 \begin{align*}
 	&\quad |(f(x)-f(y))(\partial^{k-1}f(x)-\partial^{k-1}f(y))+2(f(x)\partial^{k-1}f(y)+\partial^{k-1}f(x)f(y))\sin^2(\frac{x-y}{2})|\\
 	&\leq C|x-y|(|\partial^{k-1}f(x)|+|\partial^{k-1}f(y)|),\\
 	&\\
 	&\quad\left|(R_1(x)R_1(y)+R'_1(x)R'_1(y))\sin(x-y)\right|\leq C|x-y|,\\
 	&\\
 	&\quad\left|(R_1(x)R'_1(y)-R'_1(x)R_1(y))\cos(x-y)\right|\leq C|x-y|,\\
 	&\\
 	&\quad \left|(f(x)-f(y))(f'(x)-f'(y))+2(f(x)f'(y)+f'(x)f(y))\sin^2(\frac{x-y}{2})\right|\leq C|x-y|^2.
 \end{align*}
Therefore, by H\"older's inequality, we have $\partial^{k-1} G_{12}\in L^2$, which implies $G_{12}\in H^{k-1}$.  Notice that $f_1(x)\in V$ is an even function, and $f'_1(x)$ is odd. By changing $y$ to $-y$ in $G_{12}$, we can deduce that $G_{12}(\varepsilon,\boldsymbol f)$ is odd, and hence $G_{12}\in Y^{k-1}$.

	Next, we prove the continuity of $G_{12}$. By the definition of $G_{12}(0, \boldsymbol f)$ and above calculations, one can easily check that $G_{12}$ is continuous with respect to $\varepsilon$ at $\varepsilon=0$ and $G_{12}(0,\boldsymbol f)$ is continuous with respect to $\boldsymbol f$. Thus, we only need to prove that $G_{12}$ is continuous for $\varepsilon\not=0$. We deal with the most singular term
	\begin{align*}
		G_{122}&=\frac{1}{2}\int\!\!\!\!\!\!\!\!\!\; {}-{} \log\left(\frac{1}{ \left(R_1(x)-R_1(y)\right)^2+4R_1(x)R_1(y)\sin^2\left(\frac{x-y}{2}\right)}\right)(f'_1(y)-f'_1(x))\cos(x-y) dy\\
		&=\frac{1}{2}\int\!\!\!\!\!\!\!\!\!\; {}-{} \log\left(\frac{1}{ \varepsilon^2\left(f_1(x)-f_1(y)\right)^2+4(1+\varepsilon f_1(x))(1+f_1(y))\sin^2\left(\frac{x-y}{2}\right)}\right)\\
		&\quad \times(f'_1(y)-f'_1(x))\cos(x-y) dy.
	\end{align*}
The following notations will be used for convenience. For a general function $h$, denote
	$$\Delta g=g(x)-g(y), \ \ \ g=g(x), \ \ \ \tilde g=g(y),$$
	and
	$$D(h)=\varepsilon^{2}(\Delta h)^2+4(1+\varepsilon h)(1+\varepsilon\tilde h)\sin^2(\frac{x-y}{2}).$$
	Then for $f_1,f_2\in V$, it holds
		\begin{align*}
			&\quad G_{122}(\varepsilon, f_1)-G_{22}(\varepsilon, f_2)\\
			&=-\frac{1}{2}\int\!\!\!\!\!\!\!\!\!\; {}-{} \log\left(\frac{1}{ D(f_1)}\right)\Delta f_1'\cos(x-y) dy+\frac{1}{2}\int\!\!\!\!\!\!\!\!\!\; {}-{} \log\left(\frac{1}{ D(f_2)}\right)\Delta f_2'\cos(x-y) dy\\
			&=-\frac{1}{2}\int\!\!\!\!\!\!\!\!\!\; {}-{} \log\left(\frac{1}{ D(f_1)}\right)(\Delta f_1'-\Delta f_2')\cos(x-y) dy\\
			&\quad+\frac{1}{2}\int\!\!\!\!\!\!\!\!\!\; {}-{} \left(\log\left(\frac{1}{ D(f_2)}\right)-\log\left(\frac{1}{ D(f_1)}\right)\right)\Delta f_2'\cos(x-y) dy\\
			&=I_1+I_2.
		\end{align*}
	Similar to $G_{12}$, it is easy to show that $\|I_1\|_{H^{k-1}}\le C\|f_1-f_2\|_{H^k}$. To show  $\|I_2\|_{H^{k-1}}\le C\|f_1-f_2\|_{H^k}$, it is enough to prove $$\left|\left|\partial^{l} \left(\log\left(\frac{1}{ D(f_2)}\right)-\log\left(\frac{1}{ D(f_1)}\right)\right)\cdot|x-y|\right|\right|_{L^2}\leq C\|f_1-f_2\|_{H^k},$$
	for $l=0,\ldots, k-1$, which can be verified by direct calculations. Indeed, we consider the most singular term
	\begin{align*}
		&\quad\left(\frac{\partial^{k-1} D(f_1)}{D(f_1)}-\frac{\partial^{k-1} D(f_2)}{D(f_2)}\right)|x-y|\\
		&=\frac{\partial^{k-1} D(f_1)-\partial^{k-1} D(f_2)}{D(f_1)}|x-y|+\left(\frac{ 1}{D(f_1)}-\frac{1}{D(f_2)}\right)|x-y|\partial^{k-1} D(f_2)\\
		&=I_{21}+I_{22}.
	\end{align*}
	Obviously, it holds $\|I_{21}\|_{H^{k-1}}\le C\|f_1-f_2\|_{H^k}$. Using mean value theorem, we have
	\begin{align*}
		I_{22}=&\left(\frac{ 1}{D(f_1)}-\frac{1}{D(f_2)}\right)|x-y|\partial^{k-1} D(f_2)\\
		&=\frac{1}{\xi^2}(D(f_2)-D(f_1))|x-y|\partial^{k-1} D(f_2),
	\end{align*}
	where $\xi$ lies between $D(f_1)$ and $D(f_2)$ and hence $c_1|x-y|^2\leq \xi\leq c_2 |x-y|^2$ for some constant $c_1,c_2>0$. Noticing that $|D(f_2)-D(f_1)|\leq C|x-y|(|f_1(x)-f_2(x)|+|f_1(y)-f_2(y)|)$, it follows that $\|I_{22}\|_{H^{k-1}}\le C\|f_1-f_2\|_{H^k}$.
	Hence we have proved $G_{12}(\varepsilon, \boldsymbol f): \left(-\varepsilon_0, \varepsilon_0\right)\times V\rightarrow Y^{k-1}$ is continuous.
	And $G_{12}$ takes the form
	\begin{equation}\label{3-5}
		\begin{split}
			G_{12}&=\frac{1}{2}\int\!\!\!\!\!\!\!\!\!\; {}-{} \log\left(\frac{1}{\sin^2\left(\frac{y}{2}\right)}\right)\left(f_1(x-y)\sin(y)+(f_1'(x-y)-f_1'(x))\cos(y)\right)dy\\
			&-\frac{1}{2}\int\!\!\!\!\!\!\!\!\!\; {}-{}f_1(y)\sin(x-y)dy+\varepsilon\mathcal{R}_{12}(\varepsilon,\boldsymbol f),
		\end{split}
	\end{equation}
	where $\mathcal{R}_{12}(\varepsilon, \boldsymbol f): \left(-\varepsilon_0, \varepsilon_0 \right)\times V \rightarrow Y^{k-1}$ is continuous by previous discussion.	
	
	Then we turn to consider $G_{13}$. Since $\left(R_1(x)-R_2(y)\right)^2+4R_1(x)R_2(y)\sin^2\left(\frac{x-y}{2}\right)=(1-b)^2+O(\varepsilon)$, the term $G_{13}$ is more regular than $G_{12}$ for $\varepsilon$ small and can be proved to be bounded in $Y^{k-1}$ and continuous as above. Applying the Taylor's formula \eqref{3-3} and \eqref{3-4} to $G_{13}$, one has
	\begin{equation}\label{3-6}
		\begin{split}
		G_{13}&=\frac{\gamma-1}{2}\int\!\!\!\!\!\!\!\!\!\; {}-{} \log\left(\frac{1}{(1-b)^2+4b\sin^2\left(\frac{y}{2}\right)}\right)\left(f_2(x-y)\sin(y)+(f_2'(x-y)-bf_1'(x))\cos(y)\right)dy\\
		&+\frac{1-\gamma}{2}\int\!\!\!\!\!\!\!\!\!\; {}-{}\frac{b\sin(y)f_2(y)\left(-2(1-b)+4\sin^2\left(\frac{y}{2}\right)\right)}{(1-b)^2+4b\sin^2\left(\frac{y}{2}\right)}dy+\varepsilon\mathcal{R}_{13}(\varepsilon,\boldsymbol f),
			\end{split}
	\end{equation}
   where $\mathcal{R}_{13}(\varepsilon, \boldsymbol f): \left(-\varepsilon_0, \varepsilon_0\right)\times V \rightarrow Y^{k-1}$ is continuous.

   Similarly, one has that the denominators of $G_{14}$ and $G_{15}$ are bounded from below by a positive constant  for $\varepsilon$ small. Hence $G_{14}$ and $G_{15}$ are more regular than $G_{12}$ and can be proved to be bounded in $Y^{k-1}$ and continuous more easily than $G_{12}$. Thus, the proof of continuity of $G_{1}^0$  is complete. For later use, we compute the expansion of $G^0_1$ at $\varepsilon=0$.
   Denote
   $$A_n=\left(-1+\cos\left(\frac{2\pi n}{N}\right)\right)^2 d^2+\sin^2\left(\frac{2\pi n}{N}\right)d^2>0,$$
   $$B_n=2d(-1+\cos(\frac{2\pi n}{N}))(\cos(x)-\cos(y+\frac{2\pi n}{N}))+2d\sin(\frac{2\pi n}{N})(\sin(x)-\sin(x+\frac{2\pi n}{N})),$$
   $$\tilde B_n=2d(-1+\cos(\frac{2\pi n}{N}))(\cos(x)-b\cos(y+\frac{2\pi n}{N}))+2d\sin(\frac{2\pi n}{N})(\sin(x)-b\sin(x+\frac{2\pi n}{N})).$$
   Then, we have $$\left|\boldsymbol z_1(x)-d\boldsymbol e_1- Q_{\frac{2\pi n}{N}}\left(\boldsymbol z_1(y)-d\boldsymbol e_1\right)\right|^2=A_n+\varepsilon B_n +O(\varepsilon^2) ,$$
   and
   $$\left|\boldsymbol z_1(x)-d\boldsymbol e_1- Q_{\frac{2\pi n}{N}}\left(\boldsymbol z_2(y)-d\boldsymbol e_1\right)\right|^2=A_n+\varepsilon \tilde B_n +O(\varepsilon^2).$$
   Applying the Taylor's formula \eqref{3-3} and \eqref{3-4}, we have
   \begin{align}\label{3-7}
   	   G_{14}&=\sum_{n=1}^{N-1} \frac{1}{ \varepsilon} \int\!\!\!\!\!\!\!\!\!\; {}-{}\log\left( \frac{1}{\left|\boldsymbol z_1(x)-d\boldsymbol e_1- Q_{\frac{2\pi n}{N}}\left(\boldsymbol z_1(y)-d\boldsymbol e_1\right)\right|}\right)\sin(x-y-\frac{2\pi n}{N}) dy\\
   	    &+\sum_{n=1}^{N-1}  \int\!\!\!\!\!\!\!\!\!\; {}-{}\log\left( \frac{1}{\left| \left(\boldsymbol z_1(x)-d\boldsymbol e_1- Q_{\frac{2\pi n}{N}}\left(\boldsymbol z_1(y)-d\boldsymbol e_1\right)\right)\right|}\right)\nonumber\\
   	    &\quad \times\left( f_1(y)\sin(x-y-\frac{2\pi n}{N})+ (f_1'(y)-f_1'(x))\cos(x-y-\frac{2\pi n}{N})\right)dy+\varepsilon\mathcal{R}_{14}\nonumber  \\
   	    &=\sum_{n=1}^{N-1} \frac{1}{ 2\varepsilon} \int\!\!\!\!\!\!\!\!\!\; {}-{}\log\left( \frac{1}{A_n+\varepsilon B_n+O(\varepsilon^2)}\right)\sin(x-y-\frac{2\pi n}{N}) dy\nonumber\\
   	    &+\sum_{n=1}^{N-1}  \int\!\!\!\!\!\!\!\!\!\; {}-{}\log\left( \frac{1}{A_n}\right) \left( f_1(y)\sin(x-y-\frac{2\pi n}{N})+ (f_1'(y)-f_1'(x))\cos(x-y-\frac{2\pi n}{N})\right)dy\nonumber\\
   	    &\quad+\varepsilon\mathcal{R}_{14}\nonumber  \\
   	    &=\sum_{n=1}^{N-1} \frac{1}{ 2\varepsilon} \int\!\!\!\!\!\!\!\!\!\; {}-{}\log\left( \frac{1}{A_n+\varepsilon B_n+O(\varepsilon^2)}\right)\sin(x-y-\frac{2\pi n}{N}) dy+\varepsilon\mathcal{R}_{14}\nonumber  \\
   	    &=\sum_{n=1}^{N-1} \frac{1}{ 2\varepsilon} \int\!\!\!\!\!\!\!\!\!\; {}-{}\log\left( \frac{1}{A_n}\right)\sin(x-y-\frac{2\pi n}{N}) dy\nonumber\\
   	    &\quad-\sum_{n=1}^{N-1} \frac{1}{ 2} \int\!\!\!\!\!\!\!\!\!\; {}-{} \frac{B_n}{A_n+\varepsilon tB_n+tO(\varepsilon^2)}\sin(x-y-\frac{2\pi n}{N}) dy+\varepsilon\mathcal{R}_{14}\nonumber  \\	
   	    &=-\sum_{n=1}^{N-1} \frac{1}{ 2} \int\!\!\!\!\!\!\!\!\!\; {}-{} \frac{B_n}{A_n}\sin(x-y-\frac{2\pi n}{N}) dy+\varepsilon\mathcal{R}_{14}\nonumber  \\
   	    &=-\sum_{n=1}^{N-1} \frac{(1-\cos(\frac{2\pi n}{N}))\sin (x)}{2d\left((-1+\cos(\frac{2\pi n}{N}))^2 +\sin^2(\frac{2\pi n}{N})\right)}+\varepsilon\mathcal{R}_{14}(\varepsilon, \boldsymbol f),\nonumber
   \end{align}
where $\mathcal{R}_{14}(\varepsilon, \boldsymbol f)$ is regular and may be different from line to line.

 Applying the Taylor's formula \eqref{3-3} and \eqref{3-4}, we obtain
\begin{align}\label{3-8}
{}&G_{15}=\\
	 &\ \sum_{n=1}^{N-1} \frac{\gamma-1}{ \varepsilon} \int\!\!\!\!\!\!\!\!\!\; {}-{}\log\left( \frac{1}{\left| \boldsymbol z_1(x)-d\boldsymbol e_1- Q_{\frac{2\pi n}{N}}\left(\boldsymbol z_2(y)-d\boldsymbol e_1\right)\right|}\right) b\sin(x-y-\frac{2\pi n}{N}) dy\\
	&+\sum_{n=1}^{N-1}  (\gamma-1)\int\!\!\!\!\!\!\!\!\!\; {}-{}\log\left( \frac{1}{\left| \boldsymbol z_1(x)-d\boldsymbol e_1- Q_{\frac{2\pi n}{N}}\left(\boldsymbol z_2(y)-d\boldsymbol e_1\right)\right|}\right)\nonumber\\
	&\quad \times\left( f_2(y)\sin(x-y-\frac{2\pi n}{N})+ (f_2'(y)-bf_1'(x))\cos(x-y-\frac{2\pi n}{N})\right)dy+\varepsilon\mathcal{R}_{15}\nonumber  \\
	&=\sum_{n=1}^{N-1} \frac{(\gamma-1)b}{ 2\varepsilon} \int\!\!\!\!\!\!\!\!\!\; {}-{}\log\left( \frac{1}{A_n+\varepsilon \tilde{B}_n+O(\varepsilon^2)}\right)\sin(x-y-\frac{2\pi n}{N}) dy\nonumber\\
	&+\sum_{n=1}^{N-1}  (\gamma-1)\int\!\!\!\!\!\!\!\!\!\; {}-{}\log\left( \frac{1}{A_n}\right) \left( f_2(y)\sin(x-y-\frac{2\pi n}{N})+ (f_2'(y)-f_1'(x))\cos(x-y-\frac{2\pi n}{N})\right)dy\nonumber\\
	&\quad+\varepsilon\mathcal{R}_{15}\nonumber  \\
	&=\sum_{n=1}^{N-1} \frac{b}{ 2\varepsilon} \int\!\!\!\!\!\!\!\!\!\; {}-{}\log\left( \frac{1}{A_n+\varepsilon \tilde{B}_n+O(\varepsilon^2)}\right)\sin(x-y-\frac{2\pi n}{N}) dy+\varepsilon\mathcal{R}_{15}\nonumber  \\
	&=\sum_{n=1}^{N-1} \frac{(\gamma-1)b}{ 2\varepsilon} \int\!\!\!\!\!\!\!\!\!\; {}-{}\log\left( \frac{1}{A_n}\right)\sin(x-y-\frac{2\pi n}{N}) dy\nonumber\\
	&-\sum_{n=1}^{N-1} \frac{(\gamma-1)b}{ 2} \int\!\!\!\!\!\!\!\!\!\; {}-{} \frac{\tilde{B}_n}{A_n+\varepsilon t\tilde{B}_n+tO(\varepsilon^2)}\sin(x-y-\frac{2\pi n}{N}) dy+\varepsilon\mathcal{R}_{15}\nonumber  \\	
	&=-\sum_{n=1}^{N-1} \frac{(\gamma-1)b}{ 2} \int\!\!\!\!\!\!\!\!\!\; {}-{} \frac{\tilde{B}_n}{A_n}\sin(x-y-\frac{2\pi n}{N}) dy+\varepsilon\mathcal{R}_{15}\nonumber  \\
	&=-\sum_{n=1}^{N-1} \frac{(\gamma-1)b^2(1-\cos(\frac{2\pi n}{N}))\sin (x)}{2d\left((-1+\cos(\frac{2\pi n}{N}))^2 +\sin^2(\frac{2\pi n}{N})\right)}+\varepsilon\mathcal{R}_{15}(\varepsilon, \boldsymbol f),\nonumber
\end{align}
where $\mathcal{R}_{15}(\varepsilon,\Om, \boldsymbol f): \left(-\varepsilon_0, \varepsilon_0\right)\times \mathbb{R}\times V \rightarrow Y^{k-1}$ is continuous and may be different from line to line.

Combining \eqref{3-1}, \eqref{3-5}, \eqref{3-6}, \eqref{3-7} and \eqref{3-8} we conclude
\begin{align}\label{3-9}
G^0_1	&=\pi(1-b^2+\gamma b^2)\Omega d\sin(x)\\
	&+\frac{1}{2}\int\!\!\!\!\!\!\!\!\!\; {}-{} \log\left(\frac{1}{\sin^2\left(\frac{y}{2}\right)}\right)\left(f_1(x-y)\sin(y)+(f_1'(x-y)-f_1'(x))\cos(y)\right)dy\nonumber\\
	&-\frac{1}{2}\int\!\!\!\!\!\!\!\!\!\; {}-{}f_1(y)\sin(x-y)dy\nonumber\\
	&\frac{\gamma-1}{2}\int\!\!\!\!\!\!\!\!\!\; {}-{} \log\left(\frac{1}{(1-b)^2+4b\sin^2\left(\frac{y}{2}\right)}\right)\left(f_2(x-y)\sin(y)+(f_2'(x-y)-bf_1'(x))\cos(y)\right)dy\nonumber\\
	&+\frac{1-\gamma}{2}\int\!\!\!\!\!\!\!\!\!\; {}-{}\frac{b\sin(y)f_2(y)\left(-2(1-b)+4\sin^2\left(\frac{y}{2}\right)\right)}{(1-b)^2+4b\sin^2\left(\frac{y}{2}\right)}\nonumber\\
	&-\sum_{n=1}^{N-1} \frac{(1-b^2+\gamma b^2)(1-\cos(\frac{2\pi n}{N}))\sin (x)}{2d\left((-1+\cos(\frac{2\pi n}{N}))^2 +\sin^2(\frac{2\pi n}{N})\right)}+\varepsilon\mathcal{R}_{1}(\varepsilon,\Om, \boldsymbol f),\nonumber
\end{align}
  where $\mathcal{R}_{1}(\varepsilon, \Om, \boldsymbol f): \left(-\varepsilon_0, \varepsilon_0\right)\times \mathbb{R} \times V \rightarrow Y^{k-1}$ is continuous.

  Similarly, we can prove that  	$G_2^0(\varepsilon, \Omega, \boldsymbol f): \left(-\varepsilon_0, \varepsilon_0\right)\times \mathbb{R} \times V \rightarrow Y^{k-1}$ is continuous and is of the form
  \begin{align}\label{3-10}
  G^0_2	& =\pi(1-b^2+\gamma b^2)\Omega d\sin(x)\\
  	&+\frac{1}{2b}\int\!\!\!\!\!\!\!\!\!\; {}-{} \log\left(\frac{1}{(1-b)^2+4b\sin^2\left(\frac{y}{2}\right)}\right)\left(bf_1(x-y)\sin(y)+(bf_1'(x-y)-f_2'(x))\cos(y)\right)dy\nonumber\\
  	&-\frac{1}{2}\int\!\!\!\!\!\!\!\!\!\; {}-{}\frac{\sin(y)f_1(y)\left(2(1-b)+4b\sin^2\left(\frac{y}{2}\right)\right)}{(1-b)^2+4b\sin^2\left(\frac{y}{2}\right)}\nonumber\\
  	&+\frac{\gamma-1}{2}\int\!\!\!\!\!\!\!\!\!\; {}-{} \log\left(\frac{1}{\sin^2\left(\frac{y}{2}\right)}\right)\left(f_2(x-y)\sin(y)+(f_2'(x-y)-f_2'(x))\cos(y)\right)dy\nonumber\\
  	&+\frac{1-\gamma}{2}\int\!\!\!\!\!\!\!\!\!\; {}-{}f_2(y)\sin(x-y)dy\nonumber\\	
  	&-\sum_{n=1}^{N-1} \frac{(1-b^2-\gamma b^2)(1-\cos(\frac{2\pi n}{N}))\sin (x)}{2d\left((-1+\cos(\frac{2\pi n}{N}))^2 +\sin^2(\frac{2\pi n}{N})\right)}+\varepsilon\mathcal{R}_{2}(\varepsilon,\Om, \boldsymbol f),\nonumber
  \end{align}
 where $\mathcal{R}_{2}(\varepsilon, \Om, \boldsymbol f): \left(-\varepsilon_0, \varepsilon_0\right)\times \mathbb{R} \times V \rightarrow Y^{k-1}$ is continuous.

  The proof of Lemma \ref{lm3-1} is thus complete.
\end{proof}

For $(\varepsilon,\Omega, \boldsymbol f)\in \left(-\varepsilon_0, \varepsilon_0\right)\times \mathbb{R} \times V$ and $h\in X^{k}$, denote $\partial_{f_i} G_j^0(\varepsilon, \Omega, f)h$ to be the  Gateaux derivatives.
We have following lemma:
\begin{lemma}\label{lm3-2}
	The Gautex derivative
	$\partial_{f_i} G_j^0(\varepsilon, \Omega, \boldsymbol  f): \left(-\varepsilon_0, \varepsilon_0\right)\times \mathbb{R} \times V \rightarrow L(X^{k}, Y^{k-1})$ exist and is continuous for $i, j=1,2$.	
\end{lemma}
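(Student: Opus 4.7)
The plan is to follow the same template as Lemma \ref{lm3-1}: compute the Gateaux derivative of each piece $G_{j1},\ldots,G_{j5}$ in a direction $h\in X^k$, use the Taylor identities \eqref{3-3} and \eqref{3-4} to extract the prefactor $1/\varepsilon$, and then verify that what remains is a bounded linear map $X^k\to Y^{k-1}$ whose dependence on $(\varepsilon,\Omega,\boldsymbol{f})$ is continuous. For the trivial piece $G_{j1}$ only the factor $R_i$ depends on $\boldsymbol{f}$, so the derivative is manifestly continuous. The pieces $G_{j3},G_{j4},G_{j5}$ have denominators bounded below by a positive constant (uniformly in small $\varepsilon$), so differentiating them produces kernels that are more regular than those of $G_{j2}$ and can be treated identically to the corresponding steps in the previous lemma. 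The new work is therefore concentrated in $G_{j2}$.

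\textbf{Main calculation for $G_{j2}$.} Writing $D:=\bigl(R_i(x)-R_1(y)\bigr)^2+4R_i(x)R_1(y)\sin^2\!\bigl(\tfrac{x-y}{2}\bigr)$, the directional derivative $\partial_{f_1}G_{j2}\cdot h$ produces two types of contribution: terms where the derivative hits the polynomial in $R_i,R_i',R_1,R_1'$ appearing in the numerator (these replace $f_1$ or $f_1'$ by $h$ or $h'$ in structurally identical expressions and are handled exactly as in Lemma \ref{lm3-1}), and one term where the derivative hits the logarithmic kernel, producing
\begin{equation*}
-\frac{1}{2R_i(x)\varepsilon}\int\!\!\!\!\!\!\!\!\!\; {}-{}\frac{\partial_{f_1}D\cdot h}{D}\,\bigl[(R_iR_1+R_i'R_1')\sin(x-y)+(R_iR_1'-R_i'R_1)\cos(x-y)\bigr]\,dy.
\end{equation*}
Here $\partial_{f_1}D\cdot h=\varepsilon\bigl[2(R_i-R_1)(\delta_{i1}h(x)-h(y))+4\sin^2(\tfrac{x-y}{2})(\delta_{i1}h(x)R_1(y)+R_i(x)h(y))\bigr]$, which is $O\bigl(\varepsilon|x-y|\,\|h\|_\infty\bigr)$ times first order factors, while $D\ge c|x-y|^2$; the explicit factor $\varepsilon$ cancels the $1/\varepsilon$ in front of the integral, leaving a kernel with the same order of singularity as in $G_{j2}$ itself. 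Hence the same $L^2$ and $H^{k-1}$ bounds go through with $\|h\|_{H^k}$ replacing $\|f_1\|_{H^k}$, and the difference quotient $t^{-1}\bigl(G_j^0(\varepsilon,\Omega,\boldsymbol{f}+th\boldsymbol{e}_i)-G_j^0(\varepsilon,\Omega,\boldsymbol{f})\bigr)$ converges in $Y^{k-1}$ to the formal derivative by the mean value theorem applied to the intermediate point. Parity (odd in $x$) is preserved by the change of variable $y\to-y$ exactly as before.

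\textbf{Continuity and main obstacle.} Once the formula for $\partial_{f_i}G_j^0$ is in hand, continuity on $(-\varepsilon_0,\varepsilon_0)\times\mathbb{R}\times V$ with values in $L(X^k,Y^{k-1})$ reduces to estimating, for each fixed $h$ with $\|h\|_{H^k}\le 1$, the $Y^{k-1}$-norm of the difference of derivative kernels at two nearby parameter values. The argument is the same as the continuity argument at the end of the proof of Lemma \ref{lm3-1}: one splits each such difference into a part where only the numerator changes (controlled by $\|f_1-f_2\|_{H^k}$ via H\"older and Sobolev embedding) and a part where the denominator changes, which is handled using $D(f_1)-D(f_2)=O\bigl(|x-y|(|\Delta(f_1-f_2)|+\ldots)\bigr)$ combined with the universal lower bound $D\ge c|x-y|^2$. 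The only genuine difficulty is bookkeeping for the extra derivative that must be distributed among $h,\boldsymbol{f}$ and the kernel; the worst term, which occurs when one derivative lands on the kernel and $k-1$ further derivatives land on the numerator, is estimated via the same commutator-type splitting $R_iR_1'-R_i'R_1=R_i(R_1'-R_i')+R_i'(R_i-R_1)$ that controlled $\partial^{k-1}G_{12}$. The extension to $\varepsilon=0$ is read off from the already-computed expansions \eqref{3-9}--\eqref{3-10} by differentiating term by term in $\boldsymbol{f}$, which produces a well-defined bounded operator $X^k\to Y^{k-1}$ and matches the $\varepsilon\to 0$ limit of the formal derivative. Continuity in $\Omega$ is trivial since $\Omega$ appears only linearly in $G_{j1}$.
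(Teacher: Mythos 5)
Your proposal is correct and follows essentially the same route as the paper: write down the formal Gateaux derivative term by term, observe that differentiating the kernel $D$ produces an explicit factor $\varepsilon$ (since $R_i=1+\varepsilon f_i$) that cancels the $1/\varepsilon$ prefactor so the resulting kernel is no more singular than $G_{j2}$ itself, verify convergence of the difference quotient for the most singular logarithmic term via the mean value theorem together with the bound $D\ge c|x-y|^2$, and obtain continuity of the derivative by the same kernel-difference splitting used in the continuity part of Lemma \ref{lm3-1}, the remaining terms $G_{j1},G_{j3},G_{j4},G_{j5}$ being more regular. This matches the paper's argument (which in addition records the explicit formulas for all four derivatives), so no further comment is needed.
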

\begin{proof}
		Set
		\begin{align*}
			B_{ijn}:&=2\left(-1+\cos\left(\frac{2\pi n}{N}\right)\right)d \left(R_i(x)\cos(x)-R_j(y)\cos\left(y+\frac{2\pi n}{N}\right)\right)\\
			&+2\sin\left(\frac{2\pi n}{N}\right)\left(R_i(x)\sin(x)-R_j(y)\sin\left(y+\frac{2\pi n}{N}\right)\right)\\
			&+\varepsilon\left(R_i(x)^2+R_j^2+R_i(x)R_j(y)\cos\left(x-y-\frac{2\pi n}{N}\right)\right).
		\end{align*}
	Then
		$$\left| z_i(x)-d\boldsymbol e_1- Q_{\frac{2\pi n}{N}} \left(z_j(y)-d\boldsymbol e_1\right)\right|^2=A_n+\varepsilon B_{ijn}.$$
	
	We are going to prove that the derivative of $G_1^0$ with respect to $f_1$ is as follows,
	\begin{align*}
		\partial_{f_1}G_1^0 (\varepsilon, \Omega, \boldsymbol f) h=F_1+F_2+F_3+F_4+F_5,
	\end{align*}
where $F_1,F_2, F_3, F_4, F_5$ are given by
\begin{align*}
	F_1=\pi(1-b^2+\gamma b^2)\Omega\left( \varepsilon^{2} h'(x)-\frac{d\varepsilon h'(x)\cos(x)}{R_1(x)}+\frac{d\varepsilon R'_1(x)h'(x)\cos(x)}{R_1(x)^2}\right),
\end{align*}
\begin{align*}
{}&F_2=\\
	&\ -\frac{ h(x)}{2R_1(x)^2}\int\!\!\!\!\!\!\!\!\!\; {}-{} \log\left(\frac{1}{ \left(R_1(x)-R_1(y)\right)^2+4R_1(x)R_1(y)\sin^2\left(\frac{x-y}{2}\right)}\right)\nonumber\\
	&\times \left[(R_1(x)R_1(y)+R'_1(x)R'_1(y))\sin(x-y)+(R_1(x)R'_1(y)-R'_1(x)R_1(y))\cos(x-y)\right] dy\nonumber\\
	&+\frac{1}{2R_1(x)}\int\!\!\!\!\!\!\!\!\!\; {}-{}\log\left(\frac{1}{ \left(R_1(x)-R_1(y)\right)^2+4R_1(x)R_1(y)\sin^2\left(\frac{x-y}{2}\right)}\right)\nonumber\\
	&\times \left[(h(x)R_1(y)+R_1(x)h(y)+h'(x)R'_1(y)+R'_1(x)h'(y))\sin(x-y)\right.\nonumber\\
	&+\left.(h(x)R'_1(y)+R_1(x)h'(y)-h'(x)R_1(y)-R'_1(x)h(y))\cos(x-y)\right] dy\nonumber\\
	&-\frac{1}{2R_1(x)}\int\!\!\!\!\!\!\!\!\!\; {}-{} \frac{(R_1(x)R_1(y)+R'_1(x)R'_1(y))\sin(x-y)+(R_1(x)R'_1(y)-R'_1(x)R_1(y))\cos(x-y)}{ \left(R_1(x)-R_1(y)\right)^2+4R_1(x)R_1(y)\sin^2\left(\frac{x-y}{2}\right)}\nonumber\\
	& \times \left[2\left(R_1(x)-R_1(y)\right)(h(x)-h(y))+4(h(x)R_1(y)+R_1(x)h(y))\sin^2\left(\frac{x-y}{2}\right)\right] dy,
\end{align*}
\begin{align*}
	F_3&=	 \frac{(1-\gamma)h(x)}{ 2R_1(x)^2}\int\!\!\!\!\!\!\!\!\!\; {}-{}\log\left(\frac{1}{ \left(R_1(x)-R_2(y)\right)^2+4R_1(x)R_2(y)\sin^2\left(\frac{x-y}{2}\right)}\right)\nonumber\\
	&\quad\times \left[(R_1(x)R_2(y)+R'_1(x)R'_2(y))\sin(x-y)+(R_1(x)R'_2(y)-R'_1(x)R_2(y))\cos(x-y)\right] dy\nonumber\\
	& +\frac{\gamma-1}{ 2R_1(x)}\int\!\!\!\!\!\!\!\!\!\; {}-{}\log\left(\frac{1}{ \left(R_1(x)-R_2(y)\right)^2+4R_1(x)R_2(y)\sin^2\left(\frac{x-y}{2}\right)}\right)\nonumber\\
	&\quad\times
	\left[(h(x)R_2(y)+h'(x)R'_2(y))\sin(x-y)+(h(x)R'_2(y)-h'(x)R_2(y))\cos(x-y)\right]dy\nonumber\\
	&+\frac{1-\gamma}{2R_1(x)}\int\!\!\!\!\!\!\!\!\!\; {}-{}\frac{[R_1(x)R_2(y)+R'_1(x)R'_2(y)]\sin(x-y)+[R_1(x)R'_2(y)-R'_1(x)R_2(y)]\cos(x-y)}{ \left(R_1(x)-R_2(y)\right)^2+4R_1(x)R_2(y)\sin^2\left(\frac{x-y}{2}\right)}\nonumber\\
	&\quad \times \left[2\left(R_1(x)-R_2(y)\right)h(x)+4h(x)R_2(y)\sin^2\left(\frac{x-y}{2}\right)\right] dy,
\end{align*}
\begin{align*}
{}&F_4=\\
	&-\sum_{n=1}^{N-1} \frac{h(x)}{ R_1(x)^2} \int\!\!\!\!\!\!\!\!\!\; {}-{}\log\left( \frac{1}{\left|\boldsymbol z_1(x)-d\boldsymbol e_1- Q_{\frac{2\pi n}{N}} \left(\boldsymbol z_1(y)-d\boldsymbol e_1\right)\right|}\right)\\
	&\quad \times\left[(R_1(x)R_1(y)+R_1'(x)R_1'(y))\sin(x-y-\frac{2\pi n}{N})\right.\nonumber\\
	&\qquad \left.+(R_1(x)R_1'(y)-R_1'(x)R_1(y))\cos(x-y-\frac{2\pi n}{N})\right] dy\nonumber\\
	&+\sum_{n=1}^{N-1} \frac{1}{ R_1(x)} \int\!\!\!\!\!\!\!\!\!\; {}-{}\log\left( \frac{1}{\left| \boldsymbol z_1(x)-d\boldsymbol e_1- Q_{\frac{2\pi n}{N}} \left(\boldsymbol z_1(y)-d\boldsymbol e_1\right)\right|}\right)\nonumber\\
	&\quad \times \left[(h(x)R_1(y)+R_1(x)h(y)+h'(x)R'_1(y)+R'_1(x)h'(y))\sin(x-y-\frac{2\pi n}{N})\right.\nonumber\\
	&\quad\quad+\left.(h(x)R'_1(y)+R_1(x)h'(y)-h'(x)R_1(y)-R'_1(x)h(y))\cos(x-y-\frac{2\pi n}{N})\right] dy\nonumber\\
	&-\sum_{n=1}^{N-1} \frac{ 1}{ 2 R_1(x)} \int\!\!\!\!\!\!\!\!\!\; {}-{} \frac{\partial_{f_1}B_{11n}h}{\left| \boldsymbol z_1(x)-d\boldsymbol e_1- Q_{\frac{2\pi n}{N}} \left(\boldsymbol z_1(y)-d\boldsymbol e_1\right)\right|^2}\left[(R_1(x)R_1(y)+R_1'(x)R_1'(y))\sin(x-y-\frac{2\pi n}{N})\right.\nonumber\\
	&\quad \left. +(R_1(x)R_1'(y)-R_1'(x)R_1(y))\cos(x-y-\frac{2\pi n}{N})\right] dy\nonumber
\end{align*}
and
\begin{align*}
	F_5&=\sum_{n=1}^{N-1} \frac{(1-\gamma)h(x)}{ R_1(x)^2} \int\!\!\!\!\!\!\!\!\!\; {}-{}\log\left( \frac{1}{\left| \boldsymbol z_1(x)-d\boldsymbol e_1- Q_{\frac{2\pi n}{N}} \left(\boldsymbol z_2(y)-d\boldsymbol e_1\right)\right|}\right)\nonumber\\
	&\quad\left[(R_1(x)R_2(y)+R_1'(x)R_2'(y))\sin(x-y-\frac{2\pi n}{N})\right.\nonumber\\
	&\qquad \left.+(R_1(x)R_2'(y)-R_1'(x)R_2(y))\cos(x-y-\frac{2\pi n}{N})\right] dy\nonumber\\
	&+\sum_{n=1}^{N-1} \frac{\gamma-1}{ R_1(x)} \int\!\!\!\!\!\!\!\!\!\; {}-{}\log\left( \frac{1}{\left| \boldsymbol z_1(x)-d\boldsymbol e_1- Q_{\frac{2\pi n}{N}} \left(\boldsymbol z_2(y)-d\boldsymbol e_1\right)\right|}\right)\nonumber\\
	&\quad\times \left[(h(x)R_2(y)+h'(x)R_2'(y))\sin(x-y-\frac{2\pi n}{N})+(h(x)R_2'(y)-h'(x)R_2(y))\cos(x-y-\frac{2\pi n}{N})\right] dy\nonumber\\
	&+\sum_{n=1}^{N-1} \frac{1-\gamma}{ 2R_1(x)} \int\!\!\!\!\!\!\!\!\!\; {}-{} \frac{\partial_{f_1}B_{12n} h}{\left| \boldsymbol z_1(x)-d\boldsymbol e_1- Q_{\frac{2\pi n}{N}} \left(\boldsymbol z_2(y)-d\boldsymbol e_1\right)\right|^2}\left[(R_1(x)R_2(y)+R_1'(x)R_2'(y))\sin(x-y-\frac{2\pi n}{N})\right.\nonumber\\
	&\quad\left. +(R_1(x)R_2'(y)-R_1'(x)R_2(y))\cos(x-y-\frac{2\pi n}{N})\right] dy.
\end{align*}	
	
	To this end, we need to show for $l=1,2,3,4,5$,
	$$\lim\limits_{t\to0}\left\|\frac{G_{1l}^0(\varepsilon,\Om, f_1+th,f_2)-G_{1l}^0(\varepsilon, \Om, f_1,f_2)}{t}-F_l (\varepsilon, \Omega, f)(\varepsilon, \boldsymbol f,h)\right\|_{Y^{k-1}}=0.$$
	
	The derivative of $G_{11}^0$ is obvious. Next, we  only consider the most singular term
	\begin{align*}
		G_{12}=&\frac{1}{2 R_1(x)\varepsilon}\int\!\!\!\!\!\!\!\!\!\; {}-{} \log\left(\frac{1}{ \left(R_1(x)-R_1(y)\right)^2+4R_1(x)R_1(y)\sin^2\left(\frac{x-y}{2}\right)}\right)\nonumber\\
		&\quad\times \left[(R_1(x)R_1(y)+R'_1(x)R'_1(y))\sin(x-y)+(R_1(x)R'_1(y)-R'_1(x)R_1(y))\cos(x-y)\right] dy\nonumber\\
		&=\frac{1}{2 R_1(x)\varepsilon}\int\!\!\!\!\!\!\!\!\!\; {}-{} \log\left(\frac{1}{ \left(R_1(x)-R_1(y)\right)^2+4R_1(x)R_1(y)\sin^2\left(\frac{x-y}{2}\right)}\right)\\
		&\quad \times(R_1(x)R_1(y)+R'_1(x)R'_1(y))\sin(x-y)dy\\
		&+\frac{1}{2}\int\!\!\!\!\!\!\!\!\!\; {}-{} \log\left(\frac{1}{ \left(R_1(x)-R_1(y)\right)^2+4R_1(x)R_1(y)\sin^2\left(\frac{x-y}{2}\right)}\right)(f'_1(y)-f'_1(x))\cos(x-y)dy\\
		&+\frac{f_1'(x)}{2R_1(x)}\int\!\!\!\!\!\!\!\!\!\; {}-{} \log\left(\frac{1}{ \left(R_1(x)-R_1(y)\right)^2+4R_1(x)R_1(y)\sin^2\left(\frac{x-y}{2}\right)}\right)(f_1(x)-f_1(y))\cos(x-y)dy\\
		&=G_{121}+G_{122}+G_{123}.
	\end{align*}
The proof for other terms is much easier, so we omit it.

     Recall that $R_1=1+\varepsilon f_1$, thus the $\varepsilon$ in the denominator of $G_{12}$ will be eliminated after taking derivatives. Thus, the $\varepsilon$ in $G_{12}$ causes no singularity. For simplicity, we consider the most singular term in $G_{12}$
	\begin{align*}
		G_{122}=\frac{1}{2}\int\!\!\!\!\!\!\!\!\!\; {}-{} \log\left(\frac{1}{ \left(R_1(x)-R_1(y)\right)^2+4R_1(x)R_1(y)\sin^2\left(\frac{x-y}{2}\right)}\right)(f'_1(y)-f'_1(x))\cos(x-y) dy.\nonumber\\
	\end{align*}
   We need to prove
   \begin{equation}\label{3-11}
   	\lim\limits_{t\to0}\left\|\frac{G_{122}(\varepsilon, f_1+th)-G_{122}(\varepsilon, f_1)}{t}-F_{22} (\varepsilon, f)(\varepsilon, f,h)\right\|_{Y^{k-1}}= 0,
   \end{equation}
   where $F_{22}$ is
   \begin{align*}
   	F_{22}&=\frac{1}{2}\int\!\!\!\!\!\!\!\!\!\; {}-{}\log\left(\frac{1}{ \left(R_1(x)-R_1(y)\right)^2+4R_1(x)R_1(y)\sin^2\left(\frac{x-y}{2}\right)}\right) (h'(y)-h'(x))\cos(x-y)dy\nonumber\\
   	&-\frac{1}{2}\int\!\!\!\!\!\!\!\!\!\; {}-{} \frac{(f'_1(y)-f'_1(x))\cos(x-y)}{ \left(R_1(x)-R_1(y)\right)^2+4R_1(x)R_1(y)\sin^2\left(\frac{x-y}{2}\right)}\nonumber\\
   	&\quad \times \left[2\varepsilon\left(R_1(x)-R_1(y)\right)(h(x)-h(y))+4\varepsilon(h(x)R_1(y)+R_1(x)h(y))\sin^2\left(\frac{x-y}{2}\right)\right] dy.
   \end{align*}

	Using the notations given in Lemma \ref{lm3-1}, we deduce
	\begin{equation*}
		\begin{split}
			&\frac{G_{122}(\varepsilon, f_1+th)-G_{122}(\varepsilon, f_1)}{t}-F_{22}(\varepsilon, f_1,h)\\
			&=\frac{1}{t}\int\!\!\!\!\!\!\!\!\!\; {}-{}(f_1'(x)-f_1'(y))\cos(x-y)\\
			&\quad \times\left(\log\left(\frac{1}{D(f_1+th)}\right)-\log\left(\frac{1}{D(f_1)}\right)+t\frac{2\varepsilon^2\Delta f_1\Delta h+4\varepsilon(\tilde R_1h+\tilde hR_1)\sin^2(\frac{x-y}{2})}{D(f_1)}\right)dy\\
			& +\int\!\!\!\!\!\!\!\!\!\; {}-{}(h'(x)-h'(y))\cos(x-y)\left(\log\left(\frac{1}{D(f_1+th)}\right)-\left(\frac{1}{D(f_1)}\right)\right)dy\\
			&=F_{221}+F_{222}.
		\end{split}
	\end{equation*}
	By taking ${k-1}$th partial derivatives of $F_{221}$, we find the most singular term is
	\begin{equation*}
		\begin{split}
			&\quad \frac{1}{t}\int\!\!\!\!\!\!\!\!\!\; {}-{}\left(\log\left(\frac{1}{D(f_1+th)}\right)-\log\left(\frac{1}{D(f_1)}\right)+t\frac{2\varepsilon^2\Delta f_1\Delta h+4\varepsilon(\tilde R_1h+\tilde hR_1)\sin^2(\frac{x-y}{2})}{D(f_1)}\right)\\
			& \times(\partial^kf(x)-\partial^kf(y))\cos(x-y)dy.
		\end{split}
	\end{equation*}
	Using the  mean value theorem, we have
	\begin{equation*}
		\left|\log\left(\frac{1}{D(f_1+th)}\right)-\log\left(\frac{1}{D(f_1)}\right)+t\frac{2\varepsilon^2\Delta f_1\Delta h+4\varepsilon(\tilde R_1h+\tilde hR_1)\sin^2(\frac{x-y}{2})}{D_1(f)}\right|\leq  Ct^2\zeta(\varepsilon,f,h),
	\end{equation*}
where $\|\zeta(\varepsilon,f,h)\|_{L^\infty}<\infty$. It follows that
	\begin{equation*}
		\|F_{221}\|_{Y^{k-1}}\le Ct^2\left\|\int\!\!\!\!\!\!\!\!\!\; {}-{}\partial^kf(x)-\partial^kf(y)dy\right\|_{L^2}\le Ct^2\|f\|_{X^k}.
	\end{equation*}
	Taking ${k-1}$th partial derivatives of $F_{222}$, the most singular term is
	\begin{align*}
		\int\!\!\!\!\!\!\!\!\!\; {}-{}(\partial^kh(x)-\partial^kh(y))\cos(x-y)\left(\log\left(\frac{1}{D(f_1+th)}\right)-\left(\frac{1}{D(f_1)}\right)\right)dy.
	\end{align*}
     Using the  mean value theorem, we have
     \begin{align*}
     	&\log\left(\frac{1}{D(f_1+th)}\right)-\left(\frac{1}{D(f_1)}\right)\\
     	&=-\frac{1}{\xi(\varepsilon,f_1,h)}\left(D(f_1+th)-D(f_1)\right)\\
     	&=-\frac{t}{\xi(\varepsilon,f_1,h)}\left(2\varepsilon^2\Delta f_1\Delta h +t(\Delta h)^2+4\varepsilon (h\tilde{R_1}+R_1\tilde{h}\sin^2\left(\frac{x-y}{2}\right) \right),
     \end{align*}
   where $\xi(\varepsilon,f_1,h)$ lies between $D(f_1+th)$ and $D(f_1)$ and hence $0<c_1|x-y|^2\leq |\xi(\varepsilon,f_1,h)|\leq c_2|x-y|^2$ for some positive constants $c_1$ and $c_2$. Noticing that $$\left|2\varepsilon^2\Delta f_1\Delta h +t(\Delta h)^2+4\varepsilon (h\tilde{R_1}+R_1\tilde{h}\sin^2\left(\frac{x-y}{2}\right)\right|\leq C|x-y|^2,$$
   we derive $$\left|\log\left(\frac{1}{D(f_1+th)}\right)-\left(\frac{1}{D(f_1)}\right)\right|\leq Ct,$$
   which implies
   \begin{equation*}
   	\|F_{222}\|_{Y^{k-1}}\le Ct\left\|\int\!\!\!\!\!\!\!\!\!\; {}-{}\partial^kh(x)-\partial^kh(y)dy\right\|_{L^2}\le Ct\|h\|_{X^k}.
   \end{equation*}
   Letting $t\rightarrow 0$, we obtain \eqref{3-11}. And hence, we obtain the existence of Gateaux derivative of $G_{22}$. To prove the continuity of $\partial_{f_1} G_{22}(\varepsilon, f)h$, one just need to verify by the definition. Since there is no other new idea than the proof of continuity for $G_1^0(\varepsilon, \Omega, f)$, we omit it.
   Similarly, one can prove the existence and continuity of all derivatives $\partial_{f_i} G_j^0$, we omit the details and give them directly.
   \begin{align*}
   	&\quad\partial_{f_2} G_1^0 (\varepsilon, \Omega, \boldsymbol{f}) h\\
   	&=\frac{\gamma-1}{ 2 R_1(x)}\int\!\!\!\!\!\!\!\!\!\; {}-{}\log\left(\frac{1}{ \left(R_1(x)-R_2(y)\right)^2+4R_1(x)R_2(y)\sin^2\left(\frac{x-y}{2}\right)}\right)\nonumber\\
   	&\quad\times \left[(R_1(x)h(y)+R'_1(x)h'(y))\sin(x-y)+(R_1(x)h'(y)-R'_1(x)h(y))\cos(x-y)\right]dy\nonumber\\
   	&+\frac{1-\gamma}{ 2 R_1(x)}\int\!\!\!\!\!\!\!\!\!\; {}-{}\frac{-2\left(R_1(x)-R_2(y)\right)h(y)+4R_1(x)h(y)\sin^2\left(\frac{x-y}{2}\right)}{ \left(R_1(x)-R_2(y)\right)^2+4R_1(x)R_2(y)\sin^2\left(\frac{x-y}{2}\right)}\nonumber\\
   	&\quad\times \left[(R_1(x)R_2(y)+R'_1(x)R'_2(y))\sin(x-y)+(R_1(x)R'_2(y)-R'_1(x)R_2(y))\cos(x-y)\right]dy\nonumber\\
   	&+\sum_{n=1}^{N-1} \frac{\gamma-1}{ R_1(x)} \int\!\!\!\!\!\!\!\!\!\; {}-{}\log\left( \frac{1}{\left| \boldsymbol z_1(x)-d\boldsymbol e_1- Q_{\frac{2\pi n}{N}} \left(\boldsymbol z_2(y)-d\boldsymbol e_1\right)\right|}\right)\nonumber\\
   	&\quad\,\times \left[(R_1(x)h(y)+R_1'(x)h'(y))\sin(x-y-\frac{2\pi n}{N})\right.\\
   	&\qquad\left.+(R_1(x)h'(y)-R_1'(x)h(y))\cos(x-y-\frac{2\pi n}{N})\right] dy\nonumber\\
   	&+\sum_{n=1}^{N-1} \frac{\gamma-1}{ 2R_1(x)} \int\!\!\!\!\!\!\!\!\!\; {}-{} \frac{\partial_{f_2}B_{12n}h}{\left| \boldsymbol z_1(x)-d\boldsymbol e_1- Q_{\frac{2\pi n}{N}} \left(\boldsymbol z_2(y)-d\boldsymbol e_1\right)\right|^2}\nonumber\\
   	&\quad\,\times \left[(R_1(x)R_2(y)+R_1'(x)R_2'(y))\sin(x-y-\frac{2\pi n}{N})\right.\\\
   	&\qquad\left.+(R_1(x)R_2'(y)-R_1'(x)R_2(y))\cos(x-y-\frac{2\pi n}{N})\right] dy,\nonumber\\
   \end{align*}

   \begin{align*}
   	&\quad \partial_{f_1} G_2^0 (\varepsilon, \Omega, \boldsymbol{f}) h\\
   	&=\frac{1}{2 R_2(x)}\int\!\!\!\!\!\!\!\!\!\; {}-{} \log\left(\frac{1}{ \left(R_2(x)-R_1(y)\right)^2+4R_2(x)R_1(y)\sin^2\left(\frac{x-y}{2}\right)}\right)\nonumber\\
   	&\quad\times \left[(R_2(x)h(y)+R'_2(x)h'(y))\sin(x-y)+(R_2(x)h'(y)-R'_2(x)h(y))\cos(x-y)\right] dy\nonumber\\
   	&-\frac{1}{2 R_2(x)\varepsilon}\int\!\!\!\!\!\!\!\!\!\; {}-{} \frac{-2\left(R_2(x)-R_1(y)\right)h(y)+4R_2(x)h(y)\sin^2\left(\frac{x-y}{2}\right)}{ \left(R_2(x)-R_1(y)\right)^2+4R_2(x)R_1(y)\sin^2\left(\frac{x-y}{2}\right)}\nonumber\\
   	&\quad\times \left[(R_2(x)R_1(y)+R'_2(x)R'_1(y))\sin(x-y)+(R_2(x)R'_1(y)-R'_2(x)R_1(y))\cos(x-y)\right] dy\nonumber\\
   	&+\sum_{n=1}^{N-1} \frac{1}{R_2(x)} \int\!\!\!\!\!\!\!\!\!\; {}-{}\log\left( \frac{1}{\left| \boldsymbol z_2(x)-d\boldsymbol e_1- Q_{\frac{2\pi n}{N}} \left(\boldsymbol z_1(y)-d\boldsymbol e_1\right)\right|}\right)\nonumber\\
   	&\quad \times \left[(R_2(x)h(y)+R_2'(x)h'(y))\sin(x-y-\frac{2\pi n}{N})\right.\\
   	&\qquad \left.+(R_2(x)h'(y)-R_2'(x)h(y))\cos(x-y-\frac{2\pi n}{N})\right] dy\nonumber\\
   	&-\sum_{n=1}^{N-1} \frac{1}{ R_2(x) } \int\!\!\!\!\!\!\!\!\!\; {}-{}\frac{\partial_{f_1}B_{21n}h}{\left| \boldsymbol z_2(x)-d\boldsymbol e_1- Q_{\frac{2\pi n}{N}} \left(\boldsymbol z_1(y)-d\boldsymbol e_1\right)\right|}\nonumber\\
   	&\quad \times \left[(R_2(x)R_1(y)+R_2'(x)R_1'(y))\sin(x-y-\frac{2\pi n}{N})\right.\\
   	&\qquad\left.+(R_2(x)R_1'(y)-R_2'(x)R_1(y))\cos(x-y-\frac{2\pi n}{N})\right] dy,\nonumber\\
   \end{align*}

   \begin{align*}
   	&\quad\partial_{f_2}G_2^0 (\varepsilon, \Omega, \boldsymbol f) h\\
   	&=(1-b^2-\gamma b^2)\Omega\left( \varepsilon^{2} h'(x)-\frac{d\varepsilon h'(x)\cos(x)}{R_2(x)}+\frac{d\varepsilon R'_2(x)h'(x)\cos(x)}{R_2(x)^2}\right)\nonumber\\
   	&-\frac{h(x)}{2 R_2(x)^2}\int\!\!\!\!\!\!\!\!\!\; {}-{} \log\left(\frac{1}{ \left(R_2(x)-R_1(y)\right)^2+4R_2(x)R_1(y)\sin^2\left(\frac{x-y}{2}\right)}\right)\nonumber\\
   	&\quad\times \left[(R_2(x)R_1(y)+R'_2(x)R'_1(y))\sin(x-y)+(R_2(x)R'_1(y)-R'_2(x)R_1(y))\cos(x-y)\right] dy\nonumber\\
   	&+\frac{1}{2 R_2(x)}\int\!\!\!\!\!\!\!\!\!\; {}-{} \log\left(\frac{1}{ \left(R_2(x)-R_1(y)\right)^2+4R_2(x)R_1(y)\sin^2\left(\frac{x-y}{2}\right)}\right)\nonumber\\
   	&\quad\times \left[(h(x)R_1(y)+h'(x)R'_1(y))\sin(x-y)+(h(x)R'_1(y)-h'(x)R_1(y))\cos(x-y)\right] dy\nonumber\\
   	&-\frac{1}{2 R_2(x)}\int\!\!\!\!\!\!\!\!\!\; {}-{} \frac{2\left(R_2(x)-R_1(y)\right)h(x)+4h(x)R_1(y)\sin^2\left(\frac{x-y}{2}\right)}{ \left(R_2(x)-R_1(y)\right)^2+4R_2(x)R_1(y)\sin^2\left(\frac{x-y}{2}\right)}\nonumber\\
   	&\quad\times \left[(R_2(x)R_1(y)+R'_2(x)R'_1(y))\sin(x-y)+(R_2(x)R'_1(y)-R'_2(x)R_1(y))\cos(x-y)\right] dy\nonumber\\
   	&+\frac{(1-\gamma)h(x)}{ 2 R_2(x)^2}\int\!\!\!\!\!\!\!\!\!\; {}-{}\log\left(\frac{1}{ \left(R_2(x)-R_2(y)\right)^2+4R_2(x)R_2(y)\sin^2\left(\frac{x-y}{2}\right)}\right)\nonumber\\
   	&\quad\times \left[(R_2(x)R_2(y)+R'_2(x)R'_2(y))\sin(x-y)+(R_2(x)R'_2(y)-R'_2(x)R_2(y))\cos(x-y)\right]dy\nonumber\\
   	&+\frac{\gamma-1}{ 2 R_2(x)}\int\!\!\!\!\!\!\!\!\!\; {}-{}\log\left(\frac{1}{ \left(R_2(x)-R_2(y)\right)^2+4R_2(x)R_2(y)\sin^2\left(\frac{x-y}{2}\right)}\right)\nonumber\\
   	&\quad\times \left[(h(x)R_2(y)+R_2(x)h(y)+h'(x)R'_2(y)+R'_2(x)h'(y))\sin(x-y)\right.\\
   	&\quad\quad\left.+(h(x)R'_2(y)+R_2(x)h'(y)-h'(x)R_2(y)-R'_2(x)h(y))\cos(x-y)\right]dy\nonumber\\
   	&+\frac{1-\gamma}{ 2 R_2(x)}\int\!\!\!\!\!\!\!\!\!\; {}-{} \frac{2\left(R_2(x)-R_2(y)\right)(h(x)-h(y))+4\left(h(x)R_2(y)+R_2(x)h(y)\right)\sin^2\left(\frac{x-y}{2}\right)}{ \left(R_2(x)-R_2(y)\right)^2+4R_2(x)R_2(y)\sin^2\left(\frac{x-y}{2}\right)} \nonumber\\
   	&\quad\times \left[(R_2(x)R_2(y)+R'_2(x)R'_2(y))\sin(x-y)+(R_2(x)R'_2(y)-R'_2(x)R_2(y))\cos(x-y)\right]dy\nonumber\\
   	&+\sum_{n=1}^{N-1} \frac{(\gamma-1)h(x)}{ R_2(x)^2} \int\!\!\!\!\!\!\!\!\!\; {}-{}\log\left( \frac{1}{\left| \boldsymbol z_2(x)-d\boldsymbol e_1- Q_{\frac{2\pi n}{N}} \left(\boldsymbol z_1(y)-d\boldsymbol e_1\right)\right|}\right)\nonumber\\
   	&\quad \times \left[(R_2(x)R_1(y)+R_2'(x)R_1'(y))\sin(x-y-\frac{2\pi n}{N})\right.\\
   	&\qquad\left.+(R_2(x)R_1'(y)-R_2'(x)R_1(y))\cos(x-y-\frac{2\pi n}{N})\right] dy\nonumber\\
   	&+\sum_{n=1}^{N-1} \frac{1-\gamma}{ R_2(x)} \int\!\!\!\!\!\!\!\!\!\; {}-{}\log\left( \frac{1}{\left| \boldsymbol z_2(x)-d\boldsymbol e_1- Q_{\frac{2\pi n}{N}} \left(\boldsymbol z_1(y)-d\boldsymbol e_1\right)\right|}\right)\nonumber\\
   	&\qquad \times \left[(h(x)R_1(y)+h'(x)R_1'(y))\sin(x-y-\frac{2\pi n}{N})\right.\\
   	&\qquad\left.+(h(x)R_1'(y)-h'(x)R_1(y))\cos(x-y-\frac{2\pi n}{N})\right] dy\nonumber\\
   	&+\sum_{n=1}^{N-1} \frac{\gamma-1}{ 2R_2(x)} \int\!\!\!\!\!\!\!\!\!\; {}-{}\frac{\partial_{f_2}B_{21n}h }{\left|\boldsymbol z_2(x)-d\boldsymbol e_1- Q_{\frac{2\pi n}{N}} \left(\boldsymbol z_1(y)-d\boldsymbol e_1\right)\right|^2}\nonumber\\
   	&\quad \times \left[(R_2(x)R_1(y)+R_2'(x)R_1'(y))\sin(x-y-\frac{2\pi n}{N})\right.\\
   	&\qquad\left.+(R_2(x)R_1'(y)-R_2'(x)R_1(y))\cos(x-y-\frac{2\pi n}{N})\right] dy\nonumber\\
   	&+\sum_{n=1}^{N-1} \frac{(1-\gamma)h(x)}{R_2(x)^2} \int\!\!\!\!\!\!\!\!\!\; {}-{}\log\left( \frac{1}{\left| \boldsymbol z_2(x)-d\boldsymbol e_1- Q_{\frac{2\pi n}{N}} \left(\boldsymbol z_2(y)-d\boldsymbol e_1\right)\right|}\right)\nonumber\\
   	&\quad\,\times \left[(R_2(x)R_2(y)+R_2'(x)R_2'(y))\sin(x-y-\frac{2\pi n}{N})\right.\\
   	&\qquad\left.+(R_2(x)R_2'(y)-R_2'(x)R_2(y))\cos(x-y-\frac{2\pi n}{N})\right] dy\nonumber\\
   	&+\sum_{n=1}^{N-1} \frac{\gamma-1}{ R_2(x)} \int\!\!\!\!\!\!\!\!\!\; {}-{}\log\left( \frac{1}{\left| \boldsymbol z_2(x)-d\boldsymbol e_1- Q_{\frac{2\pi n}{N}} \left(\boldsymbol z_2(y)-d\boldsymbol e_1\right)\right|}\right)\nonumber\\
   	&\quad\,\times \left[(h(x)R_2(y)+R_2(x)h(y)+h'(x)R_2'(y)+R_2'(x)h'(y))\sin(x-y-\frac{2\pi n}{N})\right.\nonumber\\
   	&\quad\quad\left.+(h(x)R_2'(y)+R_2(x)h'(y)-h'(x)R_2(y)-R_2'(x)h(y))\cos(x-y-\frac{2\pi n}{N})\right] dy\nonumber\\
   	&+\sum_{n=1}^{N-1} \frac{1-\gamma}{ R_2(x) } \int\!\!\!\!\!\!\!\!\!\; {}-{}\frac{\partial_{f_2}B_{22n}h}{\left|\boldsymbol z_2(x)-d\boldsymbol e_1- Q_{\frac{2\pi n}{N}} \left(\boldsymbol z_2(y)-d\boldsymbol e_1\right)\right|^2}\nonumber\\
   	&\quad\,\times \left[(R_2(x)R_2(y)+R_2'(x)R_2'(y))\sin(x-y-\frac{2\pi n}{N})\right.\\
   	&\qquad\left.+(R_2(x)R_2'(y)-R_2'(x)R_2(y))\cos(x-y-\frac{2\pi n}{N})\right] dy.\nonumber
   \end{align*}
\end{proof}

 Taking $\varepsilon=0$ and $\boldsymbol f=0$ in the derivatives $\partial_{f_i} G_j^0$,  using integration by parts and changing $x-y$ to $y$, we obtain
\begin{align}\label{3-12}
	&\quad\partial_{f_1}G_1^0 (0, \Omega,0) h\\
	&=\frac{1}{2}\int\!\!\!\!\!\!\!\!\!\; {}-{}\log\left(\frac{1}{4\sin^2\left(\frac{y}{2}\right)}\right) \left[h(x-y)\sin(y)+(h'(x-y)-h'(x))\cos(y)\right] dy\nonumber\\
	&-\frac{1}{2}\int\!\!\!\!\!\!\!\!\!\; {}-{} \sin(y)h(x-y)dy\nonumber\\
	& +\frac{\gamma-1}{2}\int\!\!\!\!\!\!\!\!\!\; {}-{}\log\left(\frac{1}{ \left(1-b\right)^2+4b\sin^2\left(\frac{y}{2}\right)}\right)
	\left[-h'(x)b\cos(y)\right]dy\nonumber\\
	&+\sum_{n=1}^{m-1}\int\!\!\!\!\!\!\!\!\!\; {}-{}\log\left( \frac{1}{A_n}\right) \left[h(x-y)\sin(y-\frac{2\pi n}{m})+h'(x-y)\cos(y-\frac{2\pi n}{m})\right] dy\nonumber\\
	&=\frac{1}{2}\int\!\!\!\!\!\!\!\!\!\; {}-{}\log\left(\frac{1}{\sin^2\left(\frac{y}{2}\right)}\right) \left[h(x-y)\sin(y)+(h'(x-y)-h'(x))\cos(y)\right] dy\nonumber\\
	&-\frac{1}{2}\int\!\!\!\!\!\!\!\!\!\; {}-{} \sin(y)h(x-y)dy\nonumber\\
	& +\frac{(1-\gamma)bh'(x)}{2}\int\!\!\!\!\!\!\!\!\!\; {}-{}\log\left(\frac{1}{ \left(1-b\right)^2+4b\sin^2\left(\frac{y}{2}\right)}\right)
	\cos(y)dy,\nonumber
\end{align}

\begin{align}\label{3-13}
	&\quad \partial_{f_2} G_1^0 (0, \Omega, 0) h\\
	&=\frac{\gamma-1}{2}\int\!\!\!\!\!\!\!\!\!\; {}-{}\log\left(\frac{1}{ \left(1-b\right)^2+4b\sin^2\left(\frac{x-y}{2}\right)}\right) \left[h(y)\sin(x-y)+h'(y)\cos(x-y)\right]dy\nonumber\\
	&+\frac{1-\gamma}{ 2}\int\!\!\!\!\!\!\!\!\!\; {}-{}\frac{b\sin(x-y)\left(-2\left(1-b\right)h(y)+4h(y)\sin^2\left(\frac{x-y}{2}\right)\right)}{ \left(1-b\right)^2+4b\sin^2\left(\frac{x-y}{2}\right)}\nonumber\\
	&+\sum_{n=1}^{N-1} \frac{\gamma-1}{ 2} \int\!\!\!\!\!\!\!\!\!\; {}-{}\log\left( \frac{1}{A_n}\right) \left[h(y)\sin(x-y-\frac{2\pi k}{m})+h'(y)\cos(x-y-\frac{2\pi n}{N})\right] dy\nonumber\\
	&=\frac{\gamma-1}{2}\int\!\!\!\!\!\!\!\!\!\; {}-{}\log\left(\frac{1}{ \left(1-b\right)^2+4b\sin^2\left(\frac{x-y}{2}\right)}\right) h(y)\sin(x-y)dy\nonumber\\
	&+\frac{1-\gamma}{2}\int\!\!\!\!\!\!\!\!\!\; {}-{}\partial_y\left(\log\left(\frac{1}{ \left(1-b\right)^2+4b\sin^2\left(\frac{x-y}{2}\right)}\right) \cos(x-y)\right)h(y)dy\nonumber\\
	&+\frac{1-\gamma}{ 2}\int\!\!\!\!\!\!\!\!\!\; {}-{}\frac{2b\sin(x-y)\left(b-\cos(y)\right)}{ \left(1-b\right)^2+4b\sin^2\left(\frac{x-y}{2}\right)}h(y)dy\nonumber\\
	&=(1-\gamma)b^2\int\!\!\!\!\!\!\!\!\!\; {}-{}\frac{\sin(x-y)h(y)}{ \left(1-b\right)^2+4b\sin^2\left(\frac{x-y}{2}\right)}dy,\nonumber
\end{align}

\begin{align}\label{3-14}
	&\quad \partial_{f_1} G_2^0 (0, \Omega, 0) h\\
	&=\frac{1}{2 }\int\!\!\!\!\!\!\!\!\!\; {}-{} \log\left(\frac{1}{ \left(1-b\right)^2+4b\sin^2\left(\frac{x-y}{2}\right)}\right)\left[h(y) \sin(x-y)+h'(y) \cos(x-y)\right] dy\nonumber\\
	&-\frac{1}{2}\int\!\!\!\!\!\!\!\!\!\; {}-{} \frac{-2\left(b-1\right)h(y)+4bh(y)\sin^2\left(\frac{x-y}{2}\right)}{ \left(1-b\right)^2+4b\sin^2\left(\frac{x-y}{2}\right)}\sin(x-y)dy\nonumber\\
	&+\sum_{n=1}^{m-1} \frac{1}{2} \int\!\!\!\!\!\!\!\!\!\; {}-{}\log\left( \frac{1}{A_n}\right) \left[h(y)\sin(x-y-\frac{2\pi k}{m})+h'(y)\cos(x-y-\frac{2\pi k}{m})\right] dy\nonumber\\
	&=-\int\!\!\!\!\!\!\!\!\!\; {}-{}\frac{\sin(x-y)h(y)}{ \left(1-b\right)^2+4b\sin^2\left(\frac{x-y}{2}\right)}dy.\nonumber
\end{align}

\begin{align}\label{3-15}
	&\quad\partial_{f_2}G_2^0 (0, \Omega, 0) h\\
	&=-\frac{1}{2 b}\int\!\!\!\!\!\!\!\!\!\; {}-{} \log\left(\frac{1}{ \left(1-b\right)^2+4b\sin^2\left(\frac{x-y}{2}\right)}\right)h'(x)\cos(x-y) dy\nonumber\\
	&\frac{\gamma-1}{ 2 }\int\!\!\!\!\!\!\!\!\!\; {}-{}\log\left(\frac{1}{ 4b^2\sin^2\left(\frac{x-y}{2}\right)}\right) \left[h(y)\sin(x-y)+(h'(y)-h'(x))\cos(x-y)\right]dy\nonumber\\
	&+\frac{1-\gamma}{ 2 }\int\!\!\!\!\!\!\!\!\!\; {}-{} h(y)\sin(x-y)dy.\nonumber
\end{align}
 Take $h_1,h_2\in H^{k}$ with
\begin{equation*}
	h_1(x)=\sum_{j=1}^\infty a_j \cos(jx),\quad h_2(x)=\sum_{j=1}^\infty b_j \cos(jx).
\end{equation*}

To compute the precise values of $\partial_{f_i}G_j^0 (0, \Omega, 0)$, we will need the following integral identities taken from \cite{Cas4}.
\begin{lemma}\label{lm3-3}
	For $0<b<1$, there hold
	\begin{equation}\label{3-16}
		\int\!\!\!\!\!\!\!\!\!\; {}-{} e^{i m y} \log\left(\frac{1}{\sin^2\left(\frac{y}{2}\right)}\right)=\begin{cases}\frac{1}{|m|},&\quad m\in \mathbb{Z}, m\not=0,\\2\log2, &\quad m=0,\end{cases}
	\end{equation}
    \begin{equation}\label{3-17}
    	\int\!\!\!\!\!\!\!\!\!\; {}-{} e^{i m y} \log\left(\frac{1}{(1-b)^2+4b\sin^2\left(\frac{y}{2}\right)}\right)=\begin{cases}\frac{1}{|m|}b^{|m|},&\quad m\in \mathbb{Z}, m\not=0,\\2\log(2b), &\quad m=0,\end{cases}
    \end{equation}
    \begin{equation}\label{3-18}
    	\int\!\!\!\!\!\!\!\!\!\; {}-{} \frac{e^{i m y}} {(1-b)^2+4b\sin^2\left(\frac{y}{2}\right)}=\frac{b^{|m|}}{1-b^2}.
    \end{equation}
\end{lemma}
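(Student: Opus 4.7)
The plan is to deduce all three identities from the algebraic factorization
\[
(1-b)^2 + 4b\sin^2(y/2) \;=\; 1 - 2b\cos y + b^2 \;=\; (1-be^{iy})(1-be^{-iy}),
\]
valid for $0<b<1$, together with the Taylor expansion $-\log(1-z)=\sum_{k\ge 1} z^k/k$ and the geometric series $1/(1-z)=\sum_{k\ge 0} z^k$, both uniformly convergent on $|z|\le b<1$.

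For identity \eqref{3-17} I would factor the logarithm as
\[
\log\frac{1}{(1-b)^2+4b\sin^2(y/2)} \;=\; -\log(1-be^{iy})-\log(1-be^{-iy}) \;=\; \sum_{k=1}^\infty \frac{b^k}{k}\bigl(e^{iky}+e^{-iky}\bigr),
\]
where uniform convergence in $y$ is guaranteed because $|be^{\pm iy}|=b<1$. Integrating term by term against $e^{imy}$ and using the orthogonality $\int\!\!\!\!\!\!\!\!\!\; {}-{} e^{iny}\,dy=\delta_{n,0}$ picks out the coefficient $b^{|m|}/|m|$ for $m\ne 0$, with the $m=0$ case verified by a separate direct computation. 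Identity \eqref{3-16} is then obtained by the same scheme, or, more cleanly, from the classical Fourier series $-\log\!\bigl(2|\sin(y/2)|\bigr)=\sum_{k\ge 1}\cos(ky)/k$ on $(0,2\pi)$, which gives
\[
\log\frac{1}{\sin^2(y/2)} \;=\; 2\log 2 + 2\sum_{k=1}^\infty \frac{\cos(ky)}{k};
\]
reading off Fourier coefficients produces $1/|m|$ for $m\ne 0$ and $2\log 2$ for $m=0$.

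For identity \eqref{3-18} I would apply the same factorization and use partial fractions. Setting $z=e^{iy}$, one computes
\[
\frac{1}{(1-be^{iy})(1-be^{-iy})} \;=\; \frac{z}{(1-bz)(z-b)} \;=\; \frac{1}{1-b^2}\left(\frac{1}{1-bz} + \frac{b}{z-b}\right),
\]
where the decomposition is verified by clearing denominators and matching at $z=b$ and $z=1/b$. Expanding $(1-bz)^{-1}=\sum_{k\ge 0} b^k e^{iky}$ and $b/(z-b)=(b/z)/(1-b/z)=\sum_{k\ge 1} b^k e^{-iky}$ (both absolutely convergent since $0<b<1$) combines into $(1-b^2)^{-1}\sum_{k\in\mathbb{Z}} b^{|k|}e^{iky}$, so the $m$-th Fourier coefficient is immediately $b^{|m|}/(1-b^2)$.

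Since all three statements are standard Fourier series identities, no substantive obstacle is expected; the only bookkeeping is to justify the interchange of integration and summation, which is immediate from uniform convergence secured by $0<b<1$. The partial fraction computation in \eqref{3-18} is the most delicate step, and the only place where care with the manipulation of geometric series in $e^{iy}$ and $e^{-iy}$ matters.
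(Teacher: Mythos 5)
Your proof is correct in substance but follows a genuinely different route from the paper: the paper does not compute anything, it simply quotes Lemmas 3.3, 3.4 and 3.1 of \cite{Cas4} and transfers them via the substitution $r=\frac{1-b}{1+b}$, whereas you give a self-contained derivation from the factorization $(1-b)^2+4b\sin^2(y/2)=1-2b\cos y+b^2=(1-be^{iy})(1-be^{-iy})$, the series for $\log(1-z)$, and a partial-fraction/geometric-series expansion for \eqref{3-18}. All of your coefficient computations check out (including the decomposition $\frac{z}{(1-bz)(z-b)}=\frac{1}{1-b^2}\bigl(\frac{1}{1-bz}+\frac{b}{z-b}\bigr)$), and your argument is more transparent than the citation, at the cost of redoing classical computations.

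Two caveats. First, your own expansion exposes a misprint in the statement: since $\sum_{k\ge1}\frac{b^k}{k}(e^{iky}+e^{-iky})$ has no constant term, the mean value of $\log\frac{1}{(1-b)^2+4b\sin^2(y/2)}$ is $0$ for $0<b<1$ (equivalently $\frac{1}{2\pi}\int_0^{2\pi}\log(1-2b\cos y+b^2)\,dy=0$), so the ``separate direct computation'' you invoke cannot produce the value $2\log(2b)$ claimed in \eqref{3-17} for $m=0$; that entry comes from dropping the correction $-2\log(1+b)$ when substituting $r=\frac{1-b}{1+b}$ into \cite{Cas4}. This is harmless for the paper, since only the $m\neq0$ coefficients (in fact only integrals against $\cos(y)$ and $\cos((j\pm1)y)$) are used in Lemma \ref{lm3-4}, but your write-up should state the correct value rather than claim to verify the printed one. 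Second, your blanket justification ``interchange of integral and sum by uniform convergence secured by $0<b<1$'' covers \eqref{3-17} and \eqref{3-18} but not \eqref{3-16}: there is no $b$ there, and $\sum_{k\ge1}\cos(ky)/k$ is not uniformly convergent near $y=0$. The fix is immediate — $\log\frac{1}{\sin^2(y/2)}\in L^2(0,2\pi)$, so reading off Fourier coefficients is justified by Parseval, or one may let $b\to1^-$ in \eqref{3-17} using dominated convergence — but it should be said, since it is the only place where the stated justification does not literally apply.
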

\begin{proof}
	\eqref{3-16} is exactly Lemma 3.3 \cite{Cas4}.  \eqref{3-17} and \eqref{3-18} follows from Lemma 3.4 and Lemma 3.1 \cite{Cas4} respectively by taking $r=\frac{1-b}{1+b}$.
\end{proof}

We denote $G^0=(G^0_1, G^0_2)$, then the linearized operator of $G^0$ is as follows.
\begin{lemma}\label{lm3-4}
	\begin{align}\label{3-19}
		D G^0(0,\Om, 0) (h_1, h_2)=\begin{pmatrix}
			\partial_{f_1}G_1^0 (0, \Omega, 0) h_1 + \partial_{f_2}G_1^0 (0, \Omega, 0) h_2 \\
			\partial_{f_1}G_2^0(0, \Omega, 0) h_1 + \partial_{f_2}G_2^0 (0, \Omega, 0) h_2
		\end{pmatrix}
		=\sum_{j=1}^\infty  \frac{1}{2}M_j \begin{pmatrix} a_j \\ b_j \end{pmatrix} \sin(jx),
	\end{align}
	where $$M_j=\begin{pmatrix} j-1+(\gamma-1)j b^2 & (1-\gamma)b^{j+1} \\ -b^{j-1} & (\gamma-1)(j-1)+j \end{pmatrix}.$$
\end{lemma}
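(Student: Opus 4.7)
By linearity of the Gateaux derivatives and the fact that $h_1, h_2 \in X^k$ admit the Fourier expansions $h_1 = \sum_{j\ge 1} a_j \cos(jx)$ and $h_2 = \sum_{j\ge 1} b_j \cos(jx)$, it suffices to evaluate each of the four operators in \eqref{3-12}--\eqref{3-15} on a single basis element $h(x) = \cos(jx)$ for $j \ge 1$, show that the output is a multiple of $\sin(jx)$, and assemble the coefficients into the claimed $2\times 2$ matrix $M_j$. The plan is therefore to compute four explicit Fourier integrals using the identities of Lemma \ref{lm3-3}.

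First I would insert the product-to-sum expansions
\begin{equation*}
\cos(j(x-y))\sin(y) = \tfrac{1}{2}\sin(jx)\bigl[\cos((j-1)y)-\cos((j+1)y)\bigr] + \tfrac{1}{2}\cos(jx)\bigl[\sin((j+1)y)-\sin((j-1)y)\bigr]
\end{equation*}
and similar identities for $h'(x-y)\cos y$ and $h'(x)\cos y$ into each expression of \eqref{3-12}--\eqref{3-15}. After integration against the $2\pi$-periodic kernels, the cosine-of-integer-frequency terms survive (the kernels are even in $y$) and only the coefficient of $\sin(jx)$ is nonzero, which already explains the structure $\sin(jx)$ on the right-hand side of \eqref{3-19}. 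Applying \eqref{3-16}--\eqref{3-18} with $m = j\pm 1$ then reduces everything to elementary algebra in $j$ and $b$. For example, the two terms $\int\!\!\!\!\!\!\!\!\!\; {}-{}\log(1/\sin^2(y/2))[h(x-y)\sin y + (h'(x-y)-h'(x))\cos y]\,dy$ in $\partial_{f_1}G_1^0(0,\Omega,0)h$ contribute $\sin(jx)\cdot\tfrac{1}{2}\bigl[\tfrac{1}{j-1} - \tfrac{1}{j+1} + j\bigl(\tfrac{1}{j-1}+\tfrac{1}{j+1}\bigr) - 2\bigr] = (j-1)\sin(jx)$ by \eqref{3-16}, while the $-\tfrac12\int\!\!\!\!\!\!\!\!\!\; {}-{}\sin(y)h(x-y)\,dy$ term contributes $\tfrac{1}{2}\sin(jx)\cdot \mathbbm{1}_{j=1}$ which combines naturally. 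The $(\gamma-1)$-piece involving $\log(1/((1-b)^2+4b\sin^2(y/2)))\cos y$ reduces via \eqref{3-17} to a coefficient proportional to $jb^2$, producing exactly the diagonal entry $j-1 + (\gamma-1)jb^2$.

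For the off-diagonal terms $\partial_{f_2}G_1^0$ and $\partial_{f_1}G_2^0$, the simplification already performed in \eqref{3-13} and \eqref{3-14} leaves a single Poisson-type integral against the kernel $1/((1-b)^2+4b\sin^2((x-y)/2))$. Writing $\sin(x-y)\cos(jy) = \tfrac12\sin(jx)[\cos((j-1)(x-y)) - \cos((j+1)(x-y))] + (\text{odd in }x-y)$ and applying \eqref{3-18} gives contributions $\tfrac{b^{j-1}-b^{j+1}}{1-b^2} = b^{j-1}$, which produces the off-diagonal matrix entries $(1-\gamma)b^{j+1}$ and $-b^{j-1}$ after absorbing the prefactors from \eqref{3-13}, \eqref{3-14}. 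The diagonal entry $\partial_{f_2}G_2^0$ is handled in complete analogy with $\partial_{f_1}G_1^0$ by swapping the roles of the two kernels and using that the pure logarithm $\log(1/(4b^2\sin^2(y/2)))$ differs from $\log(1/\sin^2(y/2))$ only by the constant $-2\log(2b)$, whose contribution vanishes against $\sin y, \cos y$ for $j\ge 1$.

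The main technical obstacle is purely bookkeeping: one has to track cancellations between the two competing contributions in each diagonal expression (for instance, the $h'(x)$ term must be paired with the nonlocal $h'(x-y)$ piece to yield a finite coefficient as $j\to 1$, since $\tfrac{1}{j-1}$ appears spuriously before cancellation). Apart from this, no new analytic input is needed beyond Lemma \ref{lm3-3}; everything reduces to substituting $h(x)=\cos(jx)$, expanding with product-to-sum formulae, reading off the relevant Fourier coefficient of the kernels, and collecting the matrix entries of $M_j$.
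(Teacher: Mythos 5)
Your proposal follows essentially the same route as the paper: expand $h_1,h_2$ in the cosine basis, reduce each of the four Gateaux derivatives \eqref{3-12}--\eqref{3-15} at $(0,\Omega,0)$ to Fourier integrals via product-to-sum identities, evaluate them with the identities \eqref{3-16}--\eqref{3-18} of Lemma \ref{lm3-3}, and collect the coefficients into $M_j$, which is exactly the paper's computation. Only minor bookkeeping slips appear in your intermediate formulas: the displayed bracket should read $\frac{1}{j-1}-\frac{1}{j+1}-j\left(\frac{1}{j-1}+\frac{1}{j+1}\right)+2j$ (the version you wrote evaluates to $\frac{4}{j^2-1}$ rather than $2(j-1)$), and the term $-\tfrac12$ times the mean value of $\sin(y)h(x-y)$ contributes $-\tfrac14 a_1\sin x$ (not $+\tfrac12\sin x$), which is precisely what cancels the $j=1$ anomaly of the logarithmic kernel; the final matrix entries you assert agree with the paper's $M_j$, so these are typographical rather than structural issues.
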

\begin{proof}
	We first compute $\partial_{f_1}G_1^\alpha (0, \Omega, 0) h_1$.
	Using \eqref{3-16}, we have
	\begin{align*}
		&\frac{1}{2}\int\!\!\!\!\!\!\!\!\!\; {}-{}\log\left(\frac{1}{\sin^2\left(\frac{y}{2}\right)}\right) \left[h_1(x-y)\sin(y)+(h_1'(x-y)-h_1'(x))\cos(y)\right] dy\nonumber\\
		&=\sum_{j=1}^\infty \frac{a_j}{2}\int\!\!\!\!\!\!\!\!\!\; {}-{}\log\left(\frac{1}{\sin^2\left(\frac{y}{2}\right)}\right) \left[\cos(j(x-y))\sin(y)-j\sin(j(x-y))\cos(y)+j\sin(jx)\cos(y)\right] dy\nonumber\\
		&=\sum_{j=1}^\infty \frac{a_j}{2}\int\!\!\!\!\!\!\!\!\!\; {}-{}\log\left(\frac{1}{\sin^2\left(\frac{y}{2}\right)}\right) \left[\cos(jx)\cos(jy)\sin(y)+\sin(jx)\sin(jy)\sin(y)\right.\\
		&\quad \left.-j\sin(jx)\cos(jy)\cos(y)+j\cos(jx)\sin(jy)\cos(y)+j\sin(jx)\cos(y)\right] dy\nonumber\\
		&=\sum_{j=1}^\infty \frac{a_j}{2}\int\!\!\!\!\!\!\!\!\!\; {}-{}\log\left(\frac{1}{\sin^2\left(\frac{y}{2}\right)}\right) \left[\sin(jx)\sin(jy)\sin(y)-j\sin(jx)\cos(jy)\cos(y)+j\sin(jx)\cos(y)\right] dy\nonumber\\
		&= \frac{a_1\sin(x)}{2}\int\!\!\!\!\!\!\!\!\!\; {}-{}\log\left(\frac{1}{\sin^2\left(\frac{y}{2}\right)}\right) \left[\sin(y)\sin(y)-\cos(y)\cos(y)+\cos(y)\right] dy\nonumber\\
		&\quad+\sum_{j=2}^\infty \frac{a_j\sin(jx)}{2}\int\!\!\!\!\!\!\!\!\!\; {}-{}\log\left(\frac{1}{\sin^2\left(\frac{y}{2}\right)}\right) \left[\sin(jy)\sin(y)-j\cos(jy)\cos(y)+j\cos(y)\right] dy\nonumber\\
		&= \frac{a_1\sin(x)}{2}\int\!\!\!\!\!\!\!\!\!\; {}-{}\log\left(\frac{1}{\sin^2\left(\frac{y}{2}\right)}\right) \left[-\cos(2y)+\cos(y)\right] dy\nonumber\\
		&\quad+\sum_{j=2}^\infty \frac{a_j\sin(jx)}{4}\int\!\!\!\!\!\!\!\!\!\; {}-{}\log\left(\frac{1}{\sin^2\left(\frac{y}{2}\right)}\right) \left[(1-j)\cos((j-1)y)-(j+1)\cos((j+1)y)+2j\cos(y)\right] dy\nonumber\\
		&= \frac{a_1\sin(x)}{2}\left(-\frac{1}{2}+1\right)+\sum_{j=2}^\infty \frac{a_j\sin(jx)}{4}(-1-1+2j)\nonumber\\
		&= \frac{a_1\sin(x)}{4}+\sum_{j=2}^\infty \frac{(j-1)a_j\sin(jx)}{2}.\nonumber\\
	\end{align*}
    By trigonometric formula, we drive
	\begin{align*}
		&-\frac{1}{2}\int\!\!\!\!\!\!\!\!\!\; {}-{} \sin(y)h_1(x-y)dy\nonumber\\
		&=\sum_{j=1}^\infty \frac{-a_j }{2}\int\!\!\!\!\!\!\!\!\!\; {}-{} \sin(y)\cos(j(x-y))dy\nonumber\\
		&=\sum_{j=1}^\infty \frac{-a_j }{4}\int\!\!\!\!\!\!\!\!\!\; {}-{} (\sin(jx+(1-j)y)-\sin((j+1)y-jx))dy\nonumber\\
		&= \frac{-a_1\sin(x)}{4}.\nonumber
	\end{align*}
    The remaining term follows by using \eqref{3-17}
	\begin{align*}
		\frac{bh_1'(x)}{2}\int\!\!\!\!\!\!\!\!\!\; {}-{}\log\left(\frac{1}{ \left(1-b\right)^2+4b\sin^2\left(\frac{y}{2}\right)}\right)
		\cos(y)dy=\frac{b^2 h_1'(x)}{2}.
	\end{align*}
	Thus, if we combine the above identities, we arrive at
	\begin{equation*}
		\partial_{f_1}G_1^\alpha (0, \Omega, 0) h_1=\sum_{j=1}^\infty \frac{j-1+(\gamma-1)jb^2}{2}a_j \sin(jx).
	\end{equation*}

    Now, we compute $\partial_{f_2} G_1^0 (0, \Omega, 0) h_2$. By \eqref{3-18}, we have
    \begin{align*}
    	&\quad \int\!\!\!\!\!\!\!\!\!\; {}-{}\frac{\sin(y)h_2(x-y)}{ \left(1-b\right)^2+4b\sin^2\left(\frac{y}{2}\right)}dy\\
    	&=\sum_{j=1}^\infty b_j\int\!\!\!\!\!\!\!\!\!\; {}-{}\frac{\sin(y)\cos(jx-jy)}{ \left(1-b\right)^2+4b\sin^2\left(\frac{y}{2}\right)}dy\\
    	&=\sum_{j=1}^\infty b_j\int\!\!\!\!\!\!\!\!\!\; {}-{}\frac{\cos(jx)\sin(y)\cos(jy)+\sin(jx)\sin(y)\sin(jy)}{ \left(1-b\right)^2+4b\sin^2\left(\frac{y}{2}\right)}dy\\
    	&=\sum_{j=1}^\infty b_j\sin(jx)\int\!\!\!\!\!\!\!\!\!\; {}-{}\frac{\sin(y)\sin(jy)}{ \left(1-b\right)^2+4b\sin^2\left(\frac{y}{2}\right)}dy\\
    	&=\sum_{j=1}^\infty \frac{b_j\sin(jx)}{2}\int\!\!\!\!\!\!\!\!\!\; {}-{}\frac{\cos((j-1)y)-\cos((j+1)y)}{ \left(1-b\right)^2+4b\sin^2\left(\frac{y}{2}\right)}dy\\
    	&=\sum_{j=1}^\infty \frac{b_j\sin(jx)}{2}\frac{1}{1-b^2}\left(b^{j-1}-b^{j+1}\right)\\
    	&=\sum_{j=1}^\infty \frac{b^{j-1}}{2}b_j\sin(jx).
    \end{align*}
    Thus, we obtain
    \begin{equation*}
    	\partial_{f_2}G_1^\alpha (0, \Omega, 0) h_2=\sum_{j=1}^\infty \frac{(1-\gamma)b^{j+1}}{2}b_j\sin(jx).
    \end{equation*}

    Using the above results, we deduce
    \begin{equation*}
    	\partial_{f_2}G_1^\alpha (0, \Omega, 0) h_2=\sum_{j=1}^\infty \frac{-b^{j-1}}{2}b_j\sin(jx).
    \end{equation*}
and
    \begin{equation*}
    	\partial_{f_2}G_2^\alpha (0, \Omega, 0) h_2=\sum_{j=1}^\infty \frac{(\gamma-1)(j-1)+j}{2}b_j \sin(jx).
    \end{equation*}
\end{proof}

We choose $b$ such that $DG^0(0, \Om, 0)$ is a isomorphism from $X_b^k\to Y_b^{k-1}$.
\begin{lemma}\label{lm3-5}
	For $\gamma\not=0$, there exists $b\in(0,1)$ such that $DG^0(0, \Om, 0):X_b^k\to Y_b^{k-1}$ is an isomorphism.
\end{lemma}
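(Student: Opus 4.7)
The plan is to exploit the Fourier-diagonal structure of $DG^0(0,\Om,0)$ established in Lemma~\ref{lm3-4}: the operator acts mode-by-mode on the coefficients $\binom{a_j}{b_j}$ via the $2\times 2$ matrices $\tfrac12 M_j$. Consequently, being an isomorphism $X_b^k\to Y_b^{k-1}$ reduces to (i) bijectivity on each individual Fourier mode, with due attention to the first-mode constraints defining $X_b^k$ and $Y_b^{k-1}$, together with (ii) a norm estimate $\|M_j^{-1}\|=O(j^{-1})$ that matches the one-derivative gap between $H^{k-1}$ and $H^k$, so that summing the Fourier series yields a bounded inverse.

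The first mode is the heart of the matter because
\[
M_1=\begin{pmatrix}(\gamma-1)b^2 & (1-\gamma)b^2 \\ -1 & 1\end{pmatrix}
\]
has rank one, with $M_1\binom{a_1}{b_1}=(b_1-a_1)\binom{(1-\gamma)b^2}{1}$. Its image is exactly the line $\{c_1=(1-\gamma)b^2 d_1\}$ cut out by the first-mode constraint of $Y_b^{k-1}$, and its kernel is the line $a_1=b_1$. Intersecting the kernel with the $X_b^k$ constraint $a_1=(1-\gamma)b^2 b_1$ forces $b_1(1-(1-\gamma)b^2)=0$, so the intersection is trivial unless $(1-\gamma)b^2=1$, which can occur only at the single value $b=1/\sqrt{1-\gamma}$ when $\gamma<0$. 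Solving the inhomogeneous first-mode equation against the $Y_b^{k-1}$ constraint is an explicit rank computation that works under the same nondegeneracy. For $j\ge 2$ the criterion is $\det M_j\ne 0$, where
\[
\det M_j=\bigl(j-1+(\gamma-1)jb^2\bigr)\bigl((\gamma-1)(j-1)+j\bigr)+(1-\gamma)b^{2j}.
\]
This is a polynomial in $b^2$ whose top-degree coefficient is $1-\gamma$ (for $\gamma\ne 1$; if $\gamma=1$ it degenerates to the positive constant $j(j-1)$), hence nontrivial. Thus for each fixed $j\ge 2$ it vanishes at only finitely many $b\in(0,1)$, and the union of these exceptional sets over $j\ge 2$ is countable; any $b$ in the complement yields invertibility at every mode.

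For the norm bound I will expand $\det M_j=j^2\gamma\bigl(1+(\gamma-1)b^2\bigr)+O(j)$, whose leading coefficient is nonzero under precisely the same two conditions $\gamma\ne 0$ and $(1-\gamma)b^2\ne 1$ that were already imposed. Combined with the entrywise estimate $\|M_j\|=O(j)$, Cramer's rule gives $\|M_j^{-1}\|=O(j^{-1})$ uniformly in $j$, and summing Fourier series with this gain furnishes the bounded inverse operator $Y_b^{k-1}\to X_b^k$. The main obstacle is the structural observation underlying step (i): one must recognize that the seemingly artificial codimension-one constraints built into $X_b^k$ and $Y_b^{k-1}$ are exactly tailored to the rank-one cokernel and kernel of $M_1$, so that $DG^0$ automatically descends to these subspaces and becomes bijective there; once this structural point is made, the rest is reduced to the nonvanishing of an explicit family of polynomials in $b$ and a routine Cramer-style bound.
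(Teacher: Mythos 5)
Your proposal is correct and follows essentially the same route as the paper: reduce via the Fourier-block structure of Lemma \ref{lm3-4}, match the rank-one matrix $M_1$ to the codimension-one constraints defining $X_b^k$ and $Y_b^{k-1}$ (which forces $(1-\gamma)b^2\neq 1$), discard a countable exceptional set of $b$ where some $\det M_j$ vanishes, and invert mode-by-mode using $\det M_j = \gamma\bigl(1+(\gamma-1)b^2\bigr)j^2+O(j)$ together with $O(j)$ adjugate entries to get $\|M_j^{-1}\|=O(j^{-1})$ and hence a bounded inverse in $X_b^k$. The only difference is minor: you run the polynomial-nonvanishing/countability argument uniformly for all $\gamma\neq 0$, whereas the paper proves explicit positivity of $\det M_j$ for $\gamma\in(0,1)$ (with $b<\tfrac{\sqrt2}{2}$) and $\gamma>1$ (which is what produces the concrete admissible ranges of $b$ quoted in the remarks) and reserves the countability argument for $\gamma<0$; since the lemma only claims existence of some $b$, your unified and slightly simpler argument suffices.
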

\begin{proof}
	By Lemma \ref{lm3-4} and $M_1=\begin{pmatrix} -(1-\gamma)b^2 & (1-\gamma )b^{2} \\ -1 & 1 \end{pmatrix}$, it is easy to see $DG^0(0, \Om, 0)$ maps  $X_b^k$ to $ Y_b^{k-1}$ for any $k\geq 3$. Next, we show the invertibility of $DG^0$.
	
	\emph{Case i):} $\gamma\in(0,1)$. We first consider the case $\gamma\in (0,1)$, where we assume $b\in(0, \frac{\sqrt 2}{2})$.
	
	The determinant of the matrix $M_j$ is
	\begin{align*}
		\det{M_j}=\gamma(1+(\gamma-1)b^2)j^2+((1-\gamma)(1+(\gamma-1)b^2)-\gamma)j+\gamma-1+(1-\gamma)b^{2j},
	\end{align*}
	for each $j\geq 2$.
	We define the auxiliary function
	\begin{align*}
		D(x)=\gamma(1+(\gamma-1)b^2)x^2+((1-\gamma)(1+(\gamma-1)b^2)-\gamma)x+\gamma-1.
	\end{align*}
	Suppose $0<b<\frac{\sqrt 2}{2}$, then for $x\geq 2$, we have
	\begin{align*}
		D'(x)&=2\gamma(1+(\gamma-1)b^2)x+(1-\gamma)(1+(\gamma-1)b^2)-\gamma\\
		&\geq 2\gamma(1-b^2)x+(1-\gamma)(1-b^2)-\gamma\\
		&\geq 4\gamma(1-b^2)+(1-\gamma)(1-b^2)-\gamma\\
		&\geq \gamma>0,
	\end{align*}
	which implies that $D(x)$ is strict increasing and
	\begin{align*}
		D(x)\geq D(2)&=4\gamma(1+(\gamma-1)b^2)+2((1-\gamma)(1+(\gamma-1)b^2)-\gamma)+\gamma-1\\
		&>4\gamma(1-b^2)+2((1-\gamma)(1-b^2)-\gamma)+\gamma-1\\
		&>2\gamma +1-\gamma -2\gamma +\gamma-1=0,
	\end{align*}
	for $x\geq 2$.
	Thus, $\det (M_j)>D(j)\geq D(2)>0$ for $j\geq 2$ and hence $M_j$ is invertible for $\gamma\in(0,1)$ and $b\in(0,\frac{\sqrt 2}{2})$.
	
	\emph{Case ii):} $\gamma>1$. In this case, we assume $b\in(0,1)$. Noticing that for all $\gamma>1$ and $0<b<1$, it holds $$\begin{cases}\gamma(1+(\gamma-1)b^2)>0\\
		\frac{(\gamma-1)(1+(\gamma-1)b^2)+\gamma}{2\gamma(1+(\gamma-1)b^2)}\leq 1\\\gamma-1+(1-\gamma)b^{2j}=(\gamma-1)(1-b^{2j})>0.\end{cases}$$
	Thus, one easily infer from the properties of quadratic function that $\det(M_j)>0$ for $j\geq 2$, which implies the invertiblility of $M_j$.
	
	\emph{Case iii):} $\gamma<0$. In this case, it is hard to give a precise interval, where $M_j$ is invertible for all $j\geq 2$ and $b$ belonging to this interval. Instead, we will give a abstract proof of the existence of  $b\in(0,1)$ such that $M_j$ is invertible. We claim  that for all $\gamma\in \mathbb{R}$ with $\gamma\not=0$, there exists a set $S_\gamma$ composed of at most  countable points such that as long as  $b\in (0,1)\setminus S_\gamma$, $M_j$ is invertible for any $j\geq 2$. Indeed, for $\gamma\not=0$, we have $\gamma(1+(\gamma-1)b^2)\not =0$ for all $b\in(0,1)$ except for at most one point $b_\gamma$. Thus, $D(x)$ is a quadratic polynomial of $x$ for $b\not= b_\gamma$. Noticing that $b^{2x}\rightarrow 0$ as $x\rightarrow +\infty$, for $b\in(0,1)\setminus\{b_\gamma\}$, there exists $x_\gamma(b)$ sufficiently large such that $\det(M_j)\not=0$ for all $j\geq x_\gamma(b)$. Moreover, $x_\gamma(b)$ is contious with respect to $b$. Take arbitrary closed interval $I\subset ((0,1)\setminus \{b_\gamma\})$. By the continuity of $x_\gamma(b)$, one can choose a universal $x_{\gamma, I}$ satisfying $\det(M_j)\not=0$ for all $j\geq x_{\gamma,I}$ and $b\in I$. For each fixed $j$, $\det(M_j)$ is a $2j$th degree polynomial of $b$ and hence the number of $b$ such that $\det(M_j)=0$ is at most $2j$. Therefore, there are finite many  $b$ such that $\det(M_j)$ is not invertible for $2\leq j\leq x_{\gamma, I}$. This is, $\det(M_j)$ is invertible for all $j\geq 2$ and $b\in I$ except for finite many $b$. Since $(0,1)\setminus\{b_\gamma\}$ is a union of countable closed intervals, the claim above follows easily.
	
	Now, we fix $b$ such that $\det(M_j)$ is invertible for all $j\geq 2$. Then,  $M_j^{-1}$ is given by
	$$M_j^{-1}=\frac{1}{\det(M_j)}\begin{pmatrix} (\gamma-1)(j-1)+j &  b^{j-1}\\ - (1-\gamma)b^{j+1}& j-1+(\gamma-1)j b^2, \end{pmatrix}.$$
	For $g=(g_1,g_2)\in  Y_b^{k-1}$ with $g_1=(1-\gamma)b^2 d_1\sin(x)+ \sum_{j=1}^\infty c_j\sin(jx)$ and $g_2=\sum_{j=1}^\infty d_j\sin{jx}$, we define its inverse $f=(f_1, f_2)$ with
	$$f^t=2\begin{pmatrix} (1-\gamma)b^2\\ 1\end{pmatrix}\frac{d_1}{1-(1-\gamma)b^2} \cos(x)+\sum_{j=2}^\infty 2M_j^{-1}\begin{pmatrix} c_j\\ d_j \end{pmatrix}\cos(jx).$$
	Noticing that $\det(M_j)=O(j^2)$, thus the elements of $M_j^{-1}$ is of order $j^{-1}$ and hence $f\in X_b^k$.
\end{proof}
\begin{remark}
   Lemma \ref{lm3-5} fails  for $\gamma=0$. In this case $M_j=\begin{pmatrix} j-1-j b^2 & b^{j+1} \\ -b^{j-1} & 1 \end{pmatrix}.$ Though $M_j$ is still invertible for $j\geq 2$ and $0<b<\frac{\sqrt 2}{2}$, the lower right element of $M_j^{-1}$ is of order $O(1)$ and hence $f$ defined as above does not belong to $X_b^k$.
\end{remark}

Our next step is to choose $\Om$ such that  $G^0=(G_1^0,G_2^0)$ maps $X_b^{k}$ to $Y_b^{k-1}$ for all $\varepsilon$ small. According to Lemma \ref{lm3-1},  we conclude that
\begin{align*}
	&\quad G^0_1(0, \Om, 0)=G^0_2=(1-b^2+\gamma b^2)\left(\pi\Omega d-\sum_{n=1}^{N-1} \frac{1-\cos(\frac{2\pi n}{N})}{2d\left((-1+\cos(\frac{2\pi n}{N}))^2 +\sin^2(\frac{2\pi n}{N})\right)}\right)\sin(x).
\end{align*}
Therefore, if we take $\Om$ to be
\begin{equation}\label{3-20}
	\Om_*^0:=\sum_{n=1}^{N-1} \frac{1-\cos(\frac{2\pi n}{N})}{2\pi d^2\left((-1+\cos(\frac{2\pi n}{N}))^2 +\sin^2(\frac{2\pi n}{N})\right)},
\end{equation}
then $ G^0_1(0, \Om_*^0, 0)= G^0_2(0, \Om_*^0, 0)=0$ and hence $G^0(0, \Om_*^0, 0)\in Y_b^{k-1}$.

For general $\varepsilon$ and $\boldsymbol f$, we can take $\Om$ properly such that the range of $G^0=(G_1^0,G_2^0)$ belongs to $Y_b^{k-1}$.

\begin{lemma}\label{lm3-6}
	There exists
	\begin{equation*}
		\Omega^0(\varepsilon, \boldsymbol f):=\Omega_*^0+\varepsilon\mathcal{R}_\Omega(\varepsilon,\boldsymbol f)
	\end{equation*}
	with $\Omega_*^0$ given by \eqref{3-20} and continuous function $\mathcal{R}_\Omega(\varepsilon, \boldsymbol f)$ such that $\tilde G^0(\varepsilon, \boldsymbol f):=G^0(\varepsilon,\Omega^0(\varepsilon,\boldsymbol f), \boldsymbol f)$ maps $(-\varepsilon_0,  \varepsilon_0)\times V$ to $Y_b^{k-1}$.
	Moreover, $\partial_{\boldsymbol f}\mathcal{R}_\Omega(\varepsilon,  \boldsymbol f):X^k\times X^k\to \mathbb{R}^2$ is continuous.
\end{lemma}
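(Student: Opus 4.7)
The plan is to reduce the vector condition $\tilde G^0 \in Y_b^{k-1}$ to a single scalar equation in $\Omega$ and solve it by a one-dimensional implicit function theorem. Define
\begin{equation*}
H(\varepsilon, \Omega, \boldsymbol f) := \int_0^{2\pi}\!\!\bigl(G_1^0(\varepsilon, \Omega, \boldsymbol f)(x) - (1-\gamma)b^2\, G_2^0(\varepsilon, \Omega, \boldsymbol f)(x)\bigr)\sin(x)\,dx.
\end{equation*}
By Lemma \ref{lm3-1}, each $G_i^0$ already lies in $Y^{k-1}$ (odd in $x$), so the only obstruction to $\tilde G^0 \in Y_b^{k-1}$ is precisely the one matched-first-Fourier-mode condition $H = 0$; solving this by choosing $\Omega$ appropriately is what is needed.

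Two properties of $H$ drive the argument. First, inspection of \eqref{2-1} shows that $G_i^0$ is affine in $\Omega$ with linear part $\pi(1-b^2+\gamma b^2)\bigl(\varepsilon R_i'(x) - dR_i'(x)\cos(x)/R_i(x) + d\sin(x)\bigr)$, reducing at $\varepsilon = 0$ to $\pi(1-b^2+\gamma b^2)d\sin(x)$. Pairing with $\sin(x)$ and using $\int_0^{2\pi}\sin^2(x)\,dx = \pi$ yields
\begin{equation*}
\partial_\Omega H(0, \Omega, \boldsymbol f) = \pi^2 d(1-b^2+\gamma b^2)\bigl(1 - (1-\gamma)b^2\bigr) = \pi^2 d(1-b^2+\gamma b^2)^2 =: c_0,
\end{equation*}
independent of $(\Omega, \boldsymbol f)$ and nonzero under the standing hypothesis $1-b^2+\gamma b^2 \neq 0$. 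Second, at $\varepsilon = 0$ the map $\boldsymbol f \mapsto G^0(0, \Omega_*^0, \boldsymbol f)$ is affine and in fact reduces to $DG^0(0, \Omega_*^0, 0)\boldsymbol f$ since $G^0(0, \Omega_*^0, 0) = 0$ by the very definition \eqref{3-20} of $\Omega_*^0$; by Lemma \ref{lm3-5} this linear operator sends $X_b^k$ into $Y_b^{k-1}$, and therefore $H(0, \Omega_*^0, \boldsymbol f) = 0$ for every $\boldsymbol f \in V$. Combining these two observations with the remainder structure displayed in \eqref{3-9}--\eqref{3-10} gives the clean decomposition
\begin{equation*}
H(\varepsilon, \Omega, \boldsymbol f) = c_0(\Omega - \Omega_*^0) + \varepsilon\,\mathcal{S}(\varepsilon, \Omega, \boldsymbol f),
\end{equation*}
where $\mathcal{S}$ is assembled from the $\varepsilon\mathcal{R}_1, \varepsilon\mathcal{R}_2$ terms; in particular $\mathcal{S}$ is continuous in all variables by Lemma \ref{lm3-1} and admits a continuous Gateaux derivative in $\boldsymbol f$ by Lemma \ref{lm3-2}.

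Setting $H = 0$ transforms into the fixed-point equation $\Omega = \Omega_*^0 - c_0^{-1}\varepsilon\,\mathcal{S}(\varepsilon, \Omega, \boldsymbol f)$. For $\varepsilon_0$ sufficiently small the right-hand side is a uniform contraction in $\Omega$ on a small neighborhood of $\Omega_*^0$, uniformly in $\boldsymbol f \in V$, producing the unique solution $\Omega^0(\varepsilon, \boldsymbol f) = \Omega_*^0 + \varepsilon\,\mathcal{R}_\Omega(\varepsilon, \boldsymbol f)$ with $\mathcal{R}_\Omega := -c_0^{-1}\mathcal{S}(\varepsilon, \Omega^0(\varepsilon,\boldsymbol f), \boldsymbol f)$ continuous. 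Implicit differentiation of this identity in $\boldsymbol f$, combined with the Gateaux-differentiability of $\mathcal{S}$ supplied by Lemma \ref{lm3-2}, yields the continuous derivative $\partial_{\boldsymbol f}\mathcal{R}_\Omega : X^k \times X^k \to \mathbb{R}^2$.

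The only genuinely subtle step is the identity $H(0, \Omega_*^0, \boldsymbol f) \equiv 0$ on all of $V$, rather than merely at $\boldsymbol f = 0$; without this one would obtain only $\Omega^0 - \Omega_*^0 = O(\varepsilon + \|\boldsymbol f\|_{H^k})$ rather than the required $O(\varepsilon)$. This uniformity is exactly what forced the weight $(1-\gamma)b^2$ in the constraint defining $X_b^k$ and $Y_b^{k-1}$: that weight is precisely tuned so that the leading linearization $DG^0(0, \Omega_*^0, 0)$ preserves the first-Fourier-coefficient relation, which is the content of Lemma \ref{lm3-5}. Everything else is routine scalar implicit-function-theorem machinery.
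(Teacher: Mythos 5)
Your proposal is correct and follows essentially the same route as the paper: you reduce membership in $Y_b^{k-1}$ to the single scalar condition \eqref{3-21}, observe that the first-mode contributions of the linear-in-$\boldsymbol f$ terms cancel in the combination $\int G_1^0\sin - (1-\gamma)b^2\int G_2^0\sin$ (the rank-one structure of $M_1$, equivalently that $DG^0(0,\Omega,0)$ preserves the weighted constraint), and use that the remaining equation is affine in $\Omega$ with nondegenerate leading coefficient $\pi^2 d(1-b^2+\gamma b^2)^2$ plus $O(\varepsilon)$ corrections. The only difference is cosmetic: the paper solves this affine equation for $\Omega_\varepsilon$ explicitly, while you obtain the same $\Omega^0=\Omega_*^0+\varepsilon\mathcal{R}_\Omega$ via a one-dimensional contraction argument.
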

\begin{proof}
	It suffices to find $\Om$ such that
	\begin{equation}\label{3-21}
		\int\!\!\!\!\!\!\!\!\!\; {}-{} G^0_1(\varepsilon, \Om, \boldsymbol f)\sin(x) dx=(1-\gamma)b^2 \int\!\!\!\!\!\!\!\!\!\; {}-{} G^0_2(\varepsilon, \Om, \boldsymbol f)\sin(x) dx.
	\end{equation}
	For the sake of convenience, set $\Om=\Om_*^0+\Om_\varepsilon$ and assume $f_1=\sum_{j=1}^\infty a_j \cos(jx)$ and $f_2=\sum_{j=1}^\infty b_j \cos(jx)$ with $a_1=(1-\gamma)b^2 b_1$.
	By \eqref{3-1}, \eqref{3-9}, \eqref{3-10} and the calculations in Lemma \ref{lm3-4}, we have
	\begin{align*}
		&\int\!\!\!\!\!\!\!\!\!\; {}-{} G^0_1(\varepsilon, \Om, \boldsymbol f)\sin(x) dx=\pi(1-b^2+\gamma b^2)\Om \left(\frac{d}{2}+\varepsilon \int\!\!\!\!\!\!\!\!\!\; {}-{} \mathcal{R}_{11}(\varepsilon, \boldsymbol f)\sin(x)dx\right)+(\gamma-1)b^2(a_1-b_1)\\
		&\quad-\frac{(1-b^2+\gamma b^2)\Om_*^0 d}{2} +\int\!\!\!\!\!\!\!\!\!\; {}-{} \varepsilon\mathcal{R}_1(\varepsilon, \boldsymbol f)\sin(x) dx\\
		&=\pi(1-b^2+\gamma b^2)\Om_\varepsilon\left(\frac{d}{2}+\varepsilon \int\!\!\!\!\!\!\!\!\!\; {}-{} \mathcal{R}_{11}(\varepsilon, \boldsymbol f)\sin(x)dx\right) +(\gamma-1)b^2(a_1-b_1) \\
		&\quad +\varepsilon\int\!\!\!\!\!\!\!\!\!\; {}-{} \left(\pi(1-b^2+\gamma b^2)\Om_*^0 \mathcal{R}_{11}(\varepsilon, \boldsymbol f)+\mathcal{R}_1(\varepsilon, \boldsymbol f)\right)\sin(x) dx,
	\end{align*}
and
\begin{align*}
	&\int\!\!\!\!\!\!\!\!\!\; {}-{} G^0_2(\varepsilon, \Om, \boldsymbol f)\sin(x) dx=\pi(1-b^2+\gamma b^2)\Om\left(\frac{d}{2}+\varepsilon \int\!\!\!\!\!\!\!\!\!\; {}-{} \mathcal{R}_{21}(\varepsilon, \boldsymbol f)\sin(x)dx\right)+(-a_1+b_1)\\
	&\quad-\frac{(1-b^2+\gamma b^2)\Om_*^0 d}{2} +\int\!\!\!\!\!\!\!\!\!\; {}-{} \varepsilon\mathcal{R}_2(\varepsilon, \boldsymbol f)\sin(x) dx\\
	&=\pi(1-b^2+\gamma b^2)\Om_\varepsilon \left(\frac{d}{2}+\varepsilon \int\!\!\!\!\!\!\!\!\!\; {}-{} \mathcal{R}_{21}(\varepsilon, \boldsymbol f)\sin(x)dx\right) +(-a_1+b_1)\\ &\quad +\varepsilon\int\!\!\!\!\!\!\!\!\!\; {}-{} \left(\pi(1-b^2+\gamma b^2)\Om_*^0 \mathcal{R}_{21}(\varepsilon, \boldsymbol f)+\mathcal{R}_2(\varepsilon, \boldsymbol f)\right)\sin(x) dx.
\end{align*}
Then, \eqref{3-21} is equivalent to
\begin{align*}
	&\pi(1-b^2+\gamma b^2)\Om_\varepsilon \left(\frac{d}{2}+\varepsilon \int\!\!\!\!\!\!\!\!\!\; {}-{} \mathcal{R}_{11}(\varepsilon, \boldsymbol f)\sin(x)dx\right)+(\gamma-1)b^2(a_1-b_1)\\ &\quad +\varepsilon\int\!\!\!\!\!\!\!\!\!\; {}-{} \left(\pi(1-b^2+\gamma b^2)\Om_*^0 \mathcal{R}_{11}(\varepsilon, \boldsymbol f)+\mathcal{R}_1(\varepsilon, \boldsymbol f)\right)\sin(x) dx\\
	=&(1-\gamma)b^2\left(\pi(1-b^2+\gamma b^2)\Om_\varepsilon \left(\frac{d}{2}+\varepsilon \int\!\!\!\!\!\!\!\!\!\; {}-{} \mathcal{R}_{21}(\varepsilon, \boldsymbol f)\sin(x)dx\right) +(-a_1+b_1)\right.\\
	 &\quad \left.+\varepsilon\int\!\!\!\!\!\!\!\!\!\; {}-{} \left(\pi(1-b^2+\gamma b^2)\Om_*^0 \mathcal{R}_{21}(\varepsilon, \boldsymbol f)+\mathcal{R}_2(\varepsilon, \boldsymbol f)\right)\sin(x) dx\right),
\end{align*}
which implies
\[
\tiny{\Om_\varepsilon=\varepsilon\frac{-\int\!\!\!\!\!\!\!\!\!\; {}-{} \left(\pi(1-b^2+\gamma b^2)\Om_*^0 \mathcal{R}_{21}(\varepsilon, \boldsymbol f)+\mathcal{R}_2(\varepsilon, \boldsymbol f)\right)\sin(x) dx+(1-\gamma)b^2\int\!\!\!\!\!\!\!\!\!\; {}-{} \left(\pi(1-b^2+\gamma b^2)\Om_*^0 \mathcal{R}_{21}(\varepsilon, \boldsymbol f)+\mathcal{R}_2(\varepsilon, \boldsymbol f)\right)\sin(x) dx}{\pi(1-b^2+\gamma b^2)\left((1-(1-\gamma)b^2)d/2+\varepsilon\left( \int\!\!\!\!\!\!\!\!\!\; {}-{} \mathcal{R}_{11}(\varepsilon, \boldsymbol f)\sin(x)dx-(1-\gamma )b^2 \int\!\!\!\!\!\!\!\!\!\; {}-{} \mathcal{R}_{21}(\varepsilon, \boldsymbol f)\sin(x)dx\right)\right)}}.
\]

Hence, the desired $ \mathcal{R}_\Omega(\varepsilon,\boldsymbol f)$ is given by $\mathcal{R}_\Omega(\varepsilon,\boldsymbol f)= \frac{\Om_\varepsilon}{\varepsilon}$ and the regularity of $\mathcal{R}_\Omega(\varepsilon,\boldsymbol f)$ follows from Lemmas \ref{lm3-1} and \ref{lm3-2}.

The proof is thus complete.	
\end{proof}
Now, we are in the position to prove Theorem \ref{thm1}.

\noindent{\bf Proof of Theorem \ref{thm1}:}
We first prove $\partial_f\tilde G^0(0,0): X_b^k\to Y_b^{k-1}$ is an isomorphism. By chain rule, it holds
\begin{equation*}
	\partial_f\tilde G^0(0,0)h=\partial_\Omega G^0(0,\Omega^0_*,0)\partial_f\Omega^0(0,0)h+\partial_f G^0(0,\Omega^0_*,0).
\end{equation*}
From Lemma \ref{lm3-6}, we know $\partial_f\Omega^0(0,0)=0$. Hence $\partial_f\tilde G^0(0,0)=\partial_f G^0(0,\Omega^0_*,0)$, and we achieve the desired result by Lemma \ref{lm3-5}.

According to Lemma \ref{lm3-1}--Lemma \ref{lm3-6}, we can apply implicit function theorem to prove that there exists $\varepsilon_0>0$ such that
\begin{equation*}
	\left\{(\varepsilon,\boldsymbol f)\in [-\varepsilon_0,\varepsilon_0]\times V \ : \ \tilde G^0_i(\varepsilon,\boldsymbol f)=0,\quad \text{for}\, i=1,2\right\}
\end{equation*}
is parameterized by one-dimensional curve $\varepsilon\in [-\varepsilon_0,\varepsilon_0]\to (\varepsilon, \boldsymbol f_\varepsilon)$.

Let $\tilde {\boldsymbol f}(x)=\boldsymbol f(-x)$. Our last step is to show that if $(\varepsilon,\boldsymbol f)$ is a solution to $\tilde G^0_i(\varepsilon,f)=0$ for $i=1,2$, then so is $(-\varepsilon, \tilde {\boldsymbol f})$. By changing $y$ to $-y$ in the integral representation of $G^0(\varepsilon,\Omega,f)$ and using the fact that $f$ is an even function, we obtain $\Omega^0(\varepsilon,\boldsymbol f)=\Omega^0(-\varepsilon,\tilde{ \boldsymbol f})$. Then we can insert it into $G^0(\varepsilon,\Omega, \boldsymbol f)$ and derive $\tilde G^0(-\varepsilon, \tilde {\boldsymbol f})=0$ by a similar substitution of variables as before.

Hence the proof of Theorem \ref{thm1} is finished.

\section{Existence of co-rotating solutions: the case $0<\alpha<2$}
The proof for the case $0<\alpha<2$ follows the same argument as the case $\alpha=0$, so we will be sketch and omit the details.

Set $$V:=\begin{cases}\{f\in X_b^k\mid ||g||_{X^k}<1\},&\quad 0<\alpha<1,\\ \{f\in X_b^{k+\log}\mid ||g||_{X^{k+\log}}<1\},&\quad \alpha=1,\\ \{f\in X_b^{k+\alpha-1}\mid ||g||_{X^{k+\alpha-1}}<1\},&\quad 1<\alpha<2.\end{cases}
	$$
We have the following two lemmas concerning the regularity of $G_i^\alpha$.
\begin{lemma}\label{lm4-1}
	$G_i^\alpha(\varepsilon, \Omega, \textbf f): \left(-\varepsilon_0, \varepsilon_0\right)\times \mathbb{R} \times V \rightarrow Y^{k-1}$ is continuous for $i=1,2$.
\end{lemma}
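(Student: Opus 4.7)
The plan is to mirror the argument used for Lemma \ref{lm3-1}, decomposing $G_i^\alpha=G_{i1}^\alpha+G_{i2}^\alpha+G_{i3}^\alpha+G_{i4}^\alpha+G_{i5}^\alpha$ in parallel with \eqref{2-1}, where $G_{i1}^\alpha$ is the angular-velocity term, $G_{i2}^\alpha$ and $G_{i3}^\alpha$ are the self-interaction contributions coming from the outer/inner boundary of the reference annulus, and $G_{i4}^\alpha$, $G_{i5}^\alpha$ are the contributions from the rotated copies. The term $G_{i1}^\alpha$ is treated exactly as before by setting $G_{i1}^\alpha(0,\Omega,\boldsymbol f)=\pi(1-b^2+\gamma b^2)\Omega d\sin(x)$ and factoring out an $\varepsilon$ remainder. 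Because the $N-1$ rotated kernels in $G_{i4}^\alpha$ and $G_{i5}^\alpha$ have denominators bounded below by $\min_n A_n^{\alpha/2}>0$ uniformly in $\varepsilon$ small, these pieces are the most regular ones and can be handled by a direct Taylor expansion of $(A_n+\varepsilon B_{ijn})^{-\alpha/2}$ using \eqref{3-4}, giving an extension to $\varepsilon=0$ together with the analogues of \eqref{3-7}--\eqref{3-8}.

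The core of the proof is therefore the self-interaction term $G_{i2}^\alpha$, which involves the singular kernel
\[
\frac{1}{\bigl|(R_i(x)-R_1(y))^2+4R_i(x)R_1(y)\sin^2(\tfrac{x-y}{2})\bigr|^{\alpha/2}},
\]
with prefactor $(R_i(x)\varepsilon|\varepsilon|^{\alpha})^{-1}$ and the ansatz $R_i=1+\varepsilon^{1+\alpha}f_i$ (or $b+\varepsilon^{1+\alpha}f_i$). The first step is to split the bracket as in Lemma \ref{lm3-1}, separating the dangerous $R_i(x)R_1(y)\sin(x-y)$ piece, which produces the factor $\varepsilon^{-1-\alpha}\int\!\!\!\!\!\!\!\!\!\; {}-{}(A+\varepsilon^{1+\alpha}B)^{-\alpha/2}\sin(x-y)\,dy$; applying \eqref{3-4} twice (as in the derivation above \eqref{3-5}) the leading $\varepsilon^{-1-\alpha}$ cancels against the oddness of $\sin$ and the $\int\!\!\!\!\!\!\!\!\!\; {}-{} f_1(y)\sin(x-y)\,dy$ correction, leaving a finite limit plus a remainder of order $\varepsilon^{1+\alpha}$. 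Meanwhile the remaining pieces, of the form $\int\!\!\!\!\!\!\!\!\!\; {}-{} A^{-\alpha/2}(f_1'(y)-f_1'(x))\cos(x-y)\,dy$ and variants, are $H^{k-1}$-controlled via the standard potential/commutator estimates on $|\sin(\tfrac{y}{2})|^{-\alpha}$: for $0<\alpha<1$ they are classically bounded on $H^k$, for $\alpha=1$ one needs the $\log$-space $X^{k+\log}$ appearing in the definition of $V$, and for $1<\alpha<2$ the fractional space $X^{k+\alpha-1}$ is exactly what makes $\partial^{k-1}$ of the integral square-integrable. These norms are tailor-made so that the $L^\infty$-bound of the symmetric difference of $\partial^k f_1$ against $|\sin(y/2)|^{-\alpha}$ is finite, which is the analogue of the step where $G_{12}\in L^2$ in Lemma \ref{lm3-1}.

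Once boundedness is established, continuity in $(\varepsilon,\Omega,\boldsymbol f)$ reduces, as in Lemma \ref{lm3-1}, to estimating differences like
\[
\bigl(A+\varepsilon^{1+\alpha}B(f)\bigr)^{-\alpha/2}-\bigl(A+\varepsilon^{1+\alpha}B(\tilde f)\bigr)^{-\alpha/2},
\]
which by the mean value theorem are pointwise bounded by $C|x-y|^{-\alpha}\bigl(|f(x)-\tilde f(x)|+|f(y)-\tilde f(y)|\bigr)$, using the uniform lower bound $A+\varepsilon^{1+\alpha}B\geq c|x-y|^2$. Combined with the norm on $V$, this yields Lipschitz-in-$\boldsymbol f$ estimates with constants uniform in $\varepsilon\in(-\varepsilon_0,\varepsilon_0)$, and the same oddness argument ($y\mapsto-y$) as before forces $G_i^\alpha\in Y^{k-1}$. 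The term $G_{i3}^\alpha$ is regularized by the gap $(1-b)^2>0$ between the two boundaries and is treated as in \eqref{3-6}.

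The main obstacle is the bookkeeping for the critical case $\alpha=1$ and the fractional case $1<\alpha<2$: extracting the leading singularity via \eqref{3-4} produces integrals whose $(k-1)$-th derivatives contain expressions like $\int\!\!\!\!\!\!\!\!\!\; {}-{}|\sin(\tfrac{y}{2})|^{-\alpha}\bigl(\partial^k f(x-y)-\partial^k f(x)\bigr)dy$, and these are finite in $L^2$ precisely because of the commutator condition built into the definition of $X^{k+\log}$ and $X^{k+\alpha-1}$. Verifying this dominance carefully—and confirming that the log/fractional norm of $\boldsymbol f$ is preserved through composition with the smooth denominator $(A+\varepsilon^{1+\alpha}B)^{-\alpha/2}/A^{-\alpha/2}-1$—is the delicate technical point. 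Everything else then follows by the same expansion/remainder scheme that concluded Lemma \ref{lm3-1}.
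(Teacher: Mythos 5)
Your proposal is correct and follows essentially the same route as the paper: the paper's own proof just transplants the argument of Lemma \ref{lm3-1} (observing that for $0<\alpha<1$ the kernel is in $L^1$), defers the most singular terms for $1\le\alpha<2$ to \cite{CQZZ}, and then records the expansions \eqref{4-1}--\eqref{4-2} via Taylor's formula \eqref{3-4}. The only real difference is that where the paper cites \cite{CQZZ}, you sketch the control of the critical contributions $\int_0^{2\pi}|\sin(\tfrac y2)|^{-\alpha}\bigl(\partial^k f(x-y)-\partial^k f(x)\bigr)\,dy$ directly from the defining conditions of $X^{k+\log}$ and $X^{k+\alpha-1}$, which is precisely what those spaces are designed for, so your outline is consistent with (and slightly more self-contained than) the paper's argument.
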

\begin{proof}
	For $0<\alpha<1$, the proof is almost the same as the case $\alpha=0$ since the kernel $\frac{1}{\left| \left(R_1(x)-R_1(y)\right)^2+4R_1(x)R_1(y)\sin^2\left(\frac{x-y}{2}\right)\right|^{\frac{\alpha}{2}}}\in L^1$. For the case $1\leq \alpha <2$, the most singular terms has been treated in \cite{CQZZ} and hence  no additional difficulty  arises in the proof of this case. Further more, by Taylor's formula \eqref{3-4}, we have
	\begin{align}\label{4-1}
		&\quad G^\alpha_1=\pi(1-b^2+\gamma b^2)\Omega d\sin(x)\\
		&+C_\alpha(1-\frac{\alpha}{2})\int\!\!\!\!\!\!\!\!\!\; {}-{} \frac{f_1(x-y)\sin(y)dy}{\left(4\sin^2(\frac{y}{2})\right)^{\frac{\alpha}{2}}}+C_\alpha\int\!\!\!\!\!\!\!\!\!\; {}-{} \frac{(f_1'(x-y)-f_1'(x))\cos(y)dy}{\left(4\sin^2(\frac{y}{2})\right)^{\frac{\alpha}{2}}}\nonumber\\
		&+\frac{(1-\gamma)\alpha C_\alpha}{2}\int\!\!\!\!\!\!\!\!\!\; {}-{}\frac{b\left(-2(1-b)f_2(y)+4f_2(y)\sin^2\left(\frac{x-y}{2}\right)\right)\sin(x-y)}{\left|(1-b)^2+4b\sin^2\left(\frac{x-y}{2}\right)\right|^{\frac{\alpha}{2}+1}}dy\nonumber\\
		&+(1-\gamma)C_\alpha\int\!\!\!\!\!\!\!\!\!\; {}-{}\frac{f_2(y)\sin(x-y)+(f_2'(y)-bf_1'(x))\cos(x-y))}{\left|(1-b)^2+4b\sin^2\left(\frac{x-y}{2}\right)\right|^{\frac{\alpha}{2}}}dy\nonumber\\
		&+(b^2-\gamma b^2-1)\sum_{n=1}^{N-1} \frac{\alpha C_\alpha(1-\cos(\frac{2\pi n}{N}))\sin (x)}{2\left((-1+\cos(\frac{2\pi n}{N}))^2 +\sin^2(\frac{2\pi n}{N})\right)^{1+\frac{\alpha}{2}}d^{1+\alpha}}+\varepsilon|\varepsilon|^\alpha\mathcal{R}_{1}^\alpha(\varepsilon,\Om, \boldsymbol f),\nonumber
	\end{align}
and
  \begin{align}\label{4-2}
	&\quad G^\alpha_2=\pi(1-b^2+\gamma b^2)\Omega d\sin(x)\\
	&=-(1-\gamma)b^{-\alpha}\left(C_\alpha(1-\frac{\alpha}{2})\int\!\!\!\!\!\!\!\!\!\; {}-{} \frac{f_2(x-y)\sin(y)dy}{\left(4\sin^2(\frac{y}{2})\right)^{\frac{\alpha}{2}}}+C_\alpha\int\!\!\!\!\!\!\!\!\!\; {}-{} \frac{(f_2'(x-y-)f_2'(x))\cos(y)dy}{\left(4\sin^2(\frac{y}{2})\right)^{\frac{\alpha}{2}}}\right)\nonumber\\
	&-\frac{\alpha C_\alpha}{2}\int\!\!\!\!\!\!\!\!\!\; {}-{}\frac{\left(2(1-b)f_1(y)+4bf_1(y)\sin^2\left(\frac{x-y}{2}\right)\right)\sin(x-y)}{\left|(1-b)^2+4b\sin^2\left(\frac{x-y}{2}\right)\right|^{\frac{\alpha}{2}+1}}dy\nonumber\\
	&+\frac{C_\alpha}{b}\int\!\!\!\!\!\!\!\!\!\; {}-{}\frac{bf_1(y)\sin(x-y)+(bf_1'(y)-f_2'(x))\cos(x-y))}{\left|(1-b)^2+4b\sin^2\left(\frac{x-y}{2}\right)\right|^{\frac{\alpha}{2}}}dy\nonumber\\
	&+(b^2-\gamma b^2-1)\sum_{n=1}^{N-1} \frac{\alpha C_\alpha(1-\cos(\frac{2\pi n}{N}))\sin (x)}{2\left((-1+\cos(\frac{2\pi n}{N}))^2 +\sin^2(\frac{2\pi n}{N})\right)^{1+\frac{\alpha}{2}}d^{1+\alpha}}+\varepsilon|\varepsilon|^\alpha \mathcal{R}_{2}^\alpha(\varepsilon,\Om, \boldsymbol f).\nonumber
\end{align}
\end{proof}

\begin{lemma}\label{lm4-2}
	The Gautex derivative $\partial_{f_i} G_j^\alpha(\varepsilon, \Omega, \boldsymbol f): \left(-\varepsilon_0, \varepsilon_0\right)\times \mathbb{R} \times V \rightarrow L(X^k, Y^{k-1})$ exists and  is continuous for $i, j=1,2$.	
\end{lemma}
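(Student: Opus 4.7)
The plan is to follow the blueprint of Lemma \ref{lm3-2} line by line, only replacing the logarithmic kernel of the Euler case by the Riesz-type kernel $|\cdot|^{-\alpha}$ and paying extra attention when $\alpha\geq 1$. I would first write down explicit candidates for $\partial_{f_i}G_j^\alpha(\varepsilon,\Omega,\boldsymbol f)h$ by formally differentiating each of the five pieces of $G_i^\alpha$ in \eqref{2-2} under the integral sign, using the chain rule on the three places where $\boldsymbol f$ appears: in the factor $1/R_i(x)$, in the kernel $|\cdots|^{-\alpha/2}$ via the identity
\[
\frac{1}{(A+\varepsilon B)^{\alpha/2}}=\frac{1}{A^{\alpha/2}}-\frac{\alpha}{2}\int_0^1\frac{\varepsilon B}{(A+t\varepsilon B)^{1+\alpha/2}}dt,
\]
and in the numerators containing $R_i, R_i'$. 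The rotation term $G_{i1}^\alpha$ is smooth in $\boldsymbol f$, and the interaction terms $G_{i3}^\alpha,G_{i4}^\alpha,G_{i5}^\alpha$ have denominators bounded below by a positive constant for $|\varepsilon|<\varepsilon_0$, so differentiating them and bounding the result in $Y^{k-1}$ reduces to routine Leibniz rule and Hölder inequality arguments, exactly as in Lemma \ref{lm3-2}.

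The core of the proof is the self-interaction terms $G_{i2}^\alpha$. For $0<\alpha<1$ the kernel $1/|x-y|^\alpha$ is in $L^1([-\pi,\pi])$, so the strategy from the $\alpha=0$ case transfers directly: write
\[
\frac{G_{i2}^\alpha(\varepsilon,\boldsymbol f+t\boldsymbol h)-G_{i2}^\alpha(\varepsilon,\boldsymbol f)}{t}-\mathrm{(candidate)}
\]
as an integral whose integrand, after applying the mean value theorem to $(A+\varepsilon B(\boldsymbol f+t\boldsymbol h))^{-\alpha/2}$, contains a factor of $t$ multiplying an object bounded in $|x-y|^{-\alpha}$ times a smooth function, and one controls the $Y^{k-1}$ norm via $\|h\|_{X^k}$ and Young's/Hölder's inequality. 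The numerator differences $B(\boldsymbol f+t\boldsymbol h)-B(\boldsymbol f)$ produce a factor $|x-y|$ or better that tempers the kernel.

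The main obstacle is the range $1\leq\alpha<2$, in which $1/|x-y|^\alpha$ is not locally integrable and only the subtracted combination $\partial_\tau\boldsymbol z(t,\tau)-\partial_\sigma\boldsymbol z(t,\sigma)$ is meaningful. Here I would work in the spaces $X_b^{k+\log}$ (for $\alpha=1$) and $X_b^{k+\alpha-1}$ (for $1<\alpha<2$), whose norms are designed precisely so that the principal-value singular operators
\[
h\longmapsto \int\!\!\!\!\!\!\!\!\!\; {}-{} \frac{h(x-y)-h(x)+h'(x)\sin y}{|\sin(y/2)|^{\alpha}}\cos(y)\,dy
\]
and its variants map boundedly into $H^{k-1}$. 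The relevant commutator estimates were established in \cite{CQZZ} and, in the closely related bifurcation analysis, in \cite{Cas1,de1,de3}. Reusing those estimates, the Gateaux derivative of $G_{i2}^\alpha$ at $\boldsymbol f\in V$ is obtained by differentiating under the principal-value integral and using the Taylor identity above with the subtraction symmetrised across the singularity, so the remainder inherits an extra factor $|\sin((x-y)/2)|^{2-\alpha}$ that suffices to pass to the limit $t\to 0$ in $Y^{k-1}$.

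Finally, continuity of the map $\boldsymbol f\mapsto \partial_{f_i}G_j^\alpha(\varepsilon,\Omega,\boldsymbol f)\in L(X^k,Y^{k-1})$ (or its $X^{k+\log},X^{k+\alpha-1}$ analogues) is proved exactly as in the last paragraph of Lemma \ref{lm3-2}: given $\boldsymbol f_1,\boldsymbol f_2$, write the difference of the two derivatives as a sum of terms, each of which is either a smooth function of $\boldsymbol f_1-\boldsymbol f_2$ multiplied by a bounded singular integral, or a difference $(A+\varepsilon B(\boldsymbol f_1))^{-\beta}-(A+\varepsilon B(\boldsymbol f_2))^{-\beta}$ that the mean value theorem converts into $(B(\boldsymbol f_1)-B(\boldsymbol f_2))$ times a kernel of the same singular order. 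Since $|B(\boldsymbol f_1)-B(\boldsymbol f_2)|\lesssim |x-y|^2\|\boldsymbol f_1-\boldsymbol f_2\|_V$, the extra quadratic factor absorbs the worsening of the singularity by one order, giving the required estimate uniformly for $h$ in the unit ball of the appropriate space. I would present the calculation in full detail only for $\partial_{f_1}G_1^\alpha$ and note that the three other derivatives are handled identically, thereby completing the proof.
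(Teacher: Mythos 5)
Your proposal is correct and follows essentially the same route as the paper: for $0<\alpha<1$ one repeats the $\alpha=0$ argument of Lemma \ref{lm3-2} with the integrable Riesz kernel in place of the logarithm, and for $1\leq\alpha<2$ one works in the spaces $X^{k+\log}$, $X^{k+\alpha-1}$ and invokes the treatment of the most singular self-interaction terms from \cite{CQZZ}. In fact your write-up supplies more detail (the explicit Taylor identity for the kernel and the continuity estimate via the mean value theorem) than the paper's own proof, which simply cites these two reductions.
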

\begin{proof}
	For the case $0<\alpha<1$, the proof is almost the same as the case $\alpha=0$. For the case $1\leq \alpha <2$, the most singular terms are treated in \cite{CQZZ} and hence the proof of this case contains no additional difficulty.
\end{proof}

 To calculate the exact expression of the Gautex derivatives, we will follow the  notations of \cite{de1}. Let $F(a,b,c, z)$ be the hypergeometric function, whose definition and properties can be found in \cite{de1, Rai}.

 Denote $$\Theta_j=\begin{cases}2^{\alpha-1}\frac{ \Gamma(1-\alpha)}{(\Gamma(1-\frac{\alpha}{2}))^2}\left(\frac{\Gamma(1+\frac{\alpha}{2})}{\Gamma(2-\frac{\alpha}{2})}-\frac{\Gamma(j+\frac{\alpha}{2})}{\Gamma(1+j-\frac{\alpha}{2})}\right),&\quad \alpha\not=1\\
	\frac{2}{\pi}\sum_{l=2}^j,&\quad \alpha=1,\end{cases}$$
$$(x)_j:=\begin{cases} 1, &j=0\\ x(x+1)(x+2)\cdots(x+j-1),\quad &j\geq 1.\end{cases}$$
and
$$\Lambda_j(b):=\frac{1}{b}\int_0^\infty \frac{J_n(bt) J_n(t)}{t^{1-\alpha}}dt,$$
where $J_n$ represents the Bessel function of the first kind.
The relationship between $\Lambda_j$ and the hypergeometric function is (see (5.24) \cite{de1})
$$\Lambda_j=\frac{\Gamma\left(\frac{\alpha}{2}\right)}{2^{1-\alpha}\Gamma\left(1-\frac{\alpha}{2}\right)} \frac{\left(\frac{\alpha}{2}\right)_j}{j!} b^{j-1} F\left(\left(\frac{\alpha}{2}\right), j+\frac{\alpha}{2}, j+1, b^2\right).$$

 Take $h_1,h_2$ with
\begin{equation*}
	h_1(x)=\sum_{j=1}^\infty a_j \cos(jx),\quad h_2(x)=\sum_{j=1}^\infty b_j \cos(jx).
\end{equation*}
The linearized operator of $G^\alpha$ is as follows.
\begin{lemma}\label{lm4-3}
	\begin{align}\label{4-3}
		D G^\alpha(0,\Om, 0) (h_1,h_2)=\begin{pmatrix}
			\partial_{f_1}G_1^\alpha (0, \Omega, 0) h_1 + \partial_{f_2}G_1^\alpha (0, \Omega, 0) h_2 \\
			\partial_{f_1}G_2^\alpha (0, \Omega, 0) h_1 + \partial_{f_2}G_2^\alpha (0, \Omega, 0) h_2
		\end{pmatrix}
		=\sum_{j=1}^\infty  j M_j \begin{pmatrix} a_j \\ b_j \end{pmatrix} \sin(jx),
	\end{align}
	where $$M_j=\begin{pmatrix} \Theta_j+(\gamma-1)b^2 \Lambda_1(b) & (1-\gamma)b^2 \Lambda_j(b) \\ -\Lambda_j(b) & (\gamma-1)b^{-\alpha}\Theta_j+\Lambda_1(b) \end{pmatrix}.$$
\end{lemma}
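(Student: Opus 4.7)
The plan is to mirror the strategy of Lemma \ref{lm3-4}: first extract the explicit integral expressions of the four Gateaux derivatives $\partial_{f_i}G_j^\alpha(0,\Omega,0)h$ from the formulas already established in Lemma \ref{lm4-2} by setting $\varepsilon=0$ and $\boldsymbol f=0$ (most of the terms containing $\Omega$ or the interaction sums between rotated patches will drop out, since they carry a factor of $\varepsilon$ or vanish after the substitution), then diagonalize in the Fourier cosine basis by testing against $h_i(x)=\sum_{j\geq 1}a_j\cos(jx)$ and computing the resulting scalar integrals.

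After the substitution, each entry $\partial_{f_i}G_j^\alpha(0,\Omega,0)h$ reduces to a sum of integrals of two types: \emph{diagonal} integrals (when $i=j$) against the singular kernel $1/|2\sin(y/2)|^\alpha$, and \emph{off-diagonal} integrals (when $i\neq j$) against the smooth kernel $1/|(1-b)^2+4b\sin^2(y/2)|^{\alpha/2}$ and its derivative in $b$. First I will deal with the diagonal part. After writing $h_i(x-y)\sin(y)$, $(h_i'(x-y)-h_i'(x))\cos(y)$ as products of $\cos(jx)$ or $\sin(jx)$ with trigonometric functions of $y$, the angular integrals reduce to
\begin{equation*}
\int\!\!\!\!\!\!\!\!\!\; {}-{}\frac{\cos(jy)}{|2\sin(y/2)|^\alpha}\,dy,
\end{equation*}
whose values I will read off as producing the constants $\Theta_j$ (for $\alpha\neq 1$ these come from the classical beta-type formula $2^{\alpha-1}\Gamma(1-\alpha)/(\Gamma(1-\alpha/2))^2$ together with the shift $\Gamma(j+\alpha/2)/\Gamma(1+j-\alpha/2)$; for $\alpha=1$ one uses the harmonic-sum formula $\tfrac{2}{\pi}\sum_{l=2}^{j}\tfrac{1}{l-\tfrac{1}{2}}$, see e.g.\ the computations in \cite{Cas4, CQZZ}). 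After simplification each diagonal contribution collapses to $j\Theta_j a_j\sin(jx)$ and $j\Theta_j b_j\sin(jx)$, producing the $\Theta_j$ factors on the diagonal of $M_j$.

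For the off-diagonal part the key identity is the Bessel--hypergeometric representation
\begin{equation*}
\Lambda_j(b)=\frac{1}{b}\int_0^\infty \frac{J_j(bt)J_j(t)}{t^{1-\alpha}}\,dt
=\frac{\Gamma(\alpha/2)}{2^{1-\alpha}\Gamma(1-\alpha/2)}\frac{(\alpha/2)_j}{j!}\,b^{j-1}\,F\!\left(\tfrac{\alpha}{2},j+\tfrac{\alpha}{2},j+1,b^{2}\right),
\end{equation*}
recorded in (5.24) of \cite{de1}. Starting from
\begin{equation*}
\int\!\!\!\!\!\!\!\!\!\; {}-{}\frac{\cos(jy)}{|(1-b)^2+4b\sin^2(y/2)|^{\alpha/2}}\,dy
\end{equation*}
I will apply the Fourier expansion of the Poisson-type kernel (or equivalently pass to the Bessel transform) to identify this integral as a multiple of $\Lambda_j(b)$; the analogous integrals appearing in $\partial_{f_1}G_2^\alpha$ and $\partial_{f_2}G_1^\alpha$ (with factors $\sin(x-y)$ or $\cos(x-y)$) are handled by the same manipulation after an integration by parts in $y$ that converts the derivative kernel $|\cdots|^{-\alpha/2-1}$ into a $b$-derivative of $|\cdots|^{-\alpha/2}$. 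A standard identity among hypergeometric functions then allows one to combine the two resulting terms into the single factor $\Lambda_j(b)$ displayed in the matrix $M_j$, with the correct $(1-\gamma)b^2$ or $b^{-\alpha}$ prefactors coming from the original coefficients in $G_1^\alpha$ and $G_2^\alpha$.

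I expect the main obstacle to be precisely this last reduction: organizing the three distinct contributions to each off-diagonal entry (the logarithmic/power kernel, its $b$-derivative, and the terms coming from $h'(y)$ after integration by parts) into one clean multiple of $\Lambda_j(b)$ requires the hypergeometric contiguous relations, and an error in a gamma-function shift will contaminate every Fourier mode. For the diagonal entry of $G_2^\alpha$ there is an additional bookkeeping point: the natural normalization for the inner boundary produces a factor $b^{-\alpha}$ in front of $\Theta_j$, which must be tracked carefully when changing variables $y\to by$ inside the singular integral. Once these identifications are made, identity \eqref{4-3} follows by collecting the coefficients of $\sin(jx)$ on both sides.
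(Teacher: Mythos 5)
Your plan matches the paper's proof: evaluate the Gateaux derivatives from Lemma \ref{lm4-2} at $(\varepsilon,\boldsymbol f)=(0,0)$ (the rotated-patch interaction sums indeed drop out), expand in the cosine basis, use the known singular-kernel integrals to produce $\Theta_j$ (as in \cite{Has,Cas1}), and identify the off-diagonal terms with $\Lambda_j(b)$ via (3.19) and (5.24) of \cite{de1} together with integration by parts and the hypergeometric contiguous relations (3.11), (3.14)--(3.15) there. This is essentially the same route the paper takes, so the proposal is correct in approach and detail.
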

\begin{proof}
	By direct calculations, we obtain
	\begin{align}\label{4-5}
		&\quad\partial_{f_1}G_1^\alpha (0, \Omega, 0) h\\
		&=\left(1-\frac{\alpha}{2}\right)C_\alpha\int\!\!\!\!\!\!\!\!\!\; {}-{} \frac{h(x-y)\sin(y)}{\left| 4\sin^2\left(\frac{y}{2}\right)\right|^{\frac{\alpha}{2}}}dy+C_\alpha\int\!\!\!\!\!\!\!\!\!\; {}-{} \frac{(h'(x-y)-h'(x))\cos(y)}{\left| 4\sin^2\left(\frac{y}{2}\right)\right|^{\frac{\alpha}{2}}}dy\nonumber\\
		&\quad +(1-\gamma)C_\alpha\int\!\!\!\!\!\!\!\!\!\; {}-{}\frac{bh'(x)\cos(y)}{\left| \left(1-b\right)^2+4b\sin^2\left(\frac{y}{2}\right)\right|^{\frac{\alpha}{2}}}dy\nonumber
	\end{align}

	\begin{align}\label{4-6}
		\partial_{f_2}G_1^\alpha (0, \Omega, 0) h&=(\gamma-1)C_\alpha\int\!\!\!\!\!\!\!\!\!\; {}-{}\frac{h(x-y)\sin(y)+h'(x-y)\cos(y)}{\left| \left(1-b\right)^2+4b\sin^2\left(\frac{y}{2}\right)\right|^{\frac{\alpha}{2}}}dy\\
		&+(1-\gamma )\alpha C_\alpha\int\!\!\!\!\!\!\!\!\!\; {}-{}\frac{bh(x-y)\sin(y)\left(b-\cos(y)\right)}{\left| \left(1-b\right)^2+4b\sin^2\left(\frac{y}{2}\right)\right|^{\frac{\alpha}{2}+1}}dy\nonumber
	\end{align}
\begin{align}\label{4-7}
	\partial_{f_1}G_2^\alpha (0, \Omega, 0) h
	&=C_\alpha\int\!\!\!\!\!\!\!\!\!\; {}-{} \frac{h(x-y)\sin(y)+h'(x-y)\cos(y)}{\left| \left(1-b\right)^2+4b\sin^2\left(\frac{y}{2}\right)\right|^{\frac{\alpha}{2}}}dy\\
	&-\alpha C_\alpha\int\!\!\!\!\!\!\!\!\!\; {}-{} \frac{h(x-y)\sin(y)\left(1-b\cos(y)\right) dy}{\left| \left(1-b\right)^2+4b\sin^2\left(\frac{y}{2}\right)\right|^{\frac{\alpha}{2}+1}}dy\nonumber
\end{align}

\begin{align}\label{4-8}
	&\quad\partial_{f_2}G_2^\alpha (0, \Omega, 0) h\\
	&=(1-\gamma)\left(\frac{\alpha }{ 2}-1\right)C_\alpha\int\!\!\!\!\!\!\!\!\!\; {}-{}\frac{\sin(y)h(x-y)}{\left| 4b^2\sin^2\left(\frac{y}{2}\right)\right|^{\frac{\alpha}{2}}}dy+(\gamma-1)C_\alpha\int\!\!\!\!\!\!\!\!\!\; {}-{}\frac{(h'(x-y)-h'(x))\cos(y)}{\left| 4b^2\sin^2\left(\frac{y}{2}\right)\right|^{\frac{\alpha}{2}}}dy\nonumber\\
	&-\frac{C_\alpha h'(x)}{b}\int\!\!\!\!\!\!\!\!\!\; {}-{} \frac{\cos(y)}{\left| \left(1-b\right)^2+4b\sin^2\left(\frac{y}{2}\right)\right|^{\frac{\alpha}{2}}}dy\nonumber
\end{align}

We first calculate $\partial_{f_1}G_1^\alpha (0, \Omega, 0) h_1$.
The first two terms has been obtained in \cite{Has} for $0<\alpha<1$ and \cite{Cas1} for $1\leq \alpha<2$,
\begin{align*}
	&\quad \left(1-\frac{\alpha}{2}\right)C_\alpha\int\!\!\!\!\!\!\!\!\!\; {}-{} \frac{h_1(x-y)\sin(y)}{\left| 4\sin^2\left(\frac{y}{2}\right)\right|^{\frac{\alpha}{2}}}dy+C_\alpha\int\!\!\!\!\!\!\!\!\!\; {}-{} \frac{(h_1'(x-y)-h_1'(x))\cos(y)}{\left| 4\sin^2\left(\frac{y}{2}\right)\right|^{\frac{\alpha}{2}}}dy\\
	&=\sum_{j=1}^\infty j\Theta_j a_j \sin(jx),
\end{align*}
where $\Theta_j=\begin{cases}2^{\alpha-1}\frac{ \Gamma(1-\alpha)}{(\Gamma(1-\frac{\alpha}{2}))^2}\left(\frac{\Gamma(1+\frac{\alpha}{2})}{\Gamma(2-\frac{\alpha}{2})}-\frac{\Gamma(j+\frac{\alpha}{2})}{\Gamma(1+j-\frac{\alpha}{2})}\right),&\quad \alpha\not=1\\
\frac{2}{\pi}\sum_{l=2}^j,&\quad \alpha=1.\end{cases}$

Using (3.19) in \cite{de1},
\begin{align*}
	&\quad C_\alpha h'_1(x)b\int\!\!\!\!\!\!\!\!\!\; {}-{}\frac{\cos(y)}{\left| \left(1-b\right)^2+4b\sin^2\left(\frac{y}{2}\right)\right|^{\frac{\alpha}{2}}}dy\\
	&= C_\alpha h'_1(x)b^2\frac{\alpha}{2} F\left(\frac{\alpha}{2}, 1+\frac{\alpha}{2}, 2, b^2\right)\\
	&=b^2 \Lambda_1(b) h'_1(x).
\end{align*}

\begin{align}\label{4-9}
	&\quad \partial_{f_1}G_1^\alpha (0, \Omega, 0) h_1\\
	&=\sum_{j=1}^{\infty} j\left(\Theta_j+(\gamma-1)b^2 \Lambda_1(b)\right) a_j \sin(jx).\nonumber
\end{align}
Next, we consider the second term $\partial_{f_2}G_1^\alpha (0, \Omega, 0) h_2$. By (3.19) in \cite{de1}, we have
\begin{align*}
	&\quad -C_\alpha\int\!\!\!\!\!\!\!\!\!\; {}-{}\frac{h_2(x-y)\sin(y)}{\left| \left(1-b\right)^2+4b\sin^2\left(\frac{y}{2}\right)\right|^{\frac{\alpha}{2}}}dy\\
	&=-C_\alpha \sum_{j=1}^\infty b_j\int\!\!\!\!\!\!\!\!\!\; {}-{}\frac{\cos(jx-jy)\cos(y)}{\left| \left(1-b\right)^2+4b\sin^2\left(\frac{y}{2}\right)\right|^{\frac{\alpha}{2}}}dy\\
	&=-C_\alpha \sum_{j=1}^\infty b_j\int\!\!\!\!\!\!\!\!\!\; {}-{}\frac{\cos(jx)\cos(jy)\sin(y)+\sin(jx)\sin(jy)\sin(y)}{\left| \left(1-b\right)^2+4b\sin^2\left(\frac{y}{2}\right)\right|^{\frac{\alpha}{2}}}dy\\
	&=-C_\alpha b_j \sum_{j=1}^\infty \sin(jx)\int\!\!\!\!\!\!\!\!\!\; {}-{}\frac{\sin(jy)\sin(y)}{\left| \left(1-b\right)^2+4b\sin^2\left(\frac{y}{2}\right)\right|^{\frac{\alpha}{2}}}dy.
\end{align*}
Using integration by parts, we obtain
\begin{align*}
	&\quad C_\alpha\int\!\!\!\!\!\!\!\!\!\; {}-{}\frac{-h_2'(x-y)\cos(y)}{\left| \left(1-b\right)^2+4b\sin^2\left(\frac{y}{2}\right)\right|^{\frac{\alpha}{2}}}dy\\
	&=C_\alpha \sum_{j=1}^\infty  b_j\int\!\!\!\!\!\!\!\!\!\; {}-{}\frac{j\sin(jx-jy)\cos(y)}{\left| \left(1-b\right)^2+4b\sin^2\left(\frac{y}{2}\right)\right|^{\frac{\alpha}{2}}}dy\\
	&=C_\alpha \sum_{j=1}^\infty b_j\int\!\!\!\!\!\!\!\!\!\; {}-{}\frac{j\sin(jx)\cos(jy)\cos(y)-j\cos(jx)\sin(jy)\cos(y)}{\left| \left(1-b\right)^2+4b\sin^2\left(\frac{y}{2}\right)\right|^{\frac{\alpha}{2}}}dy\\
	&=C_\alpha \sum_{j=1}^\infty b_j j\sin(jx) \int\!\!\!\!\!\!\!\!\!\; {}-{}\frac{\cos(jy)\cos(y)}{\left| \left(1-b\right)^2+4b\sin^2\left(\frac{y}{2}\right)\right|^{\frac{\alpha}{2}}}dy\\
	&=C_\alpha \sum_{j=1}^\infty b_j\sin(jx) \int\!\!\!\!\!\!\!\!\!\; {}-{}\frac{\cos(y)}{\left| \left(1-b\right)^2+4b\sin^2\left(\frac{y}{2}\right)\right|^{\frac{\alpha}{2}}}d\sin(jy)\\
	&=C_\alpha \sum_{j=1}^\infty b_j \sin(jx) \int\!\!\!\!\!\!\!\!\!\; {}-{}\left(\frac{\sin(jy)\sin(y)}{\left| \left(1-b\right)^2+4b\sin^2\left(\frac{y}{2}\right)\right|^{\frac{\alpha}{2}}}+\frac{\alpha b}{2}\frac{\sin({jy})\sin(2y)}{\left| \left(1-b\right)^2+4b\sin^2\left(\frac{y}{2}\right)\right|^{\frac{\alpha}{2}+1}}\right)dy.
\end{align*}
By direct calculation and (3.19) in \cite{de1}, we derive
\begin{align*}
	&\quad -\alpha bC_\alpha\int\!\!\!\!\!\!\!\!\!\; {}-{}\frac{h_2(x-y)\sin(y)\cos(y)}{\left| \left(1-b\right)^2+4b\sin^2\left(\frac{y}{2}\right)\right|^{\frac{\alpha}{2}+1}}dy\\
	&=-\frac{\alpha bC_\alpha}{2}\sum_{j=1}^\infty b_j \int\!\!\!\!\!\!\!\!\!\; {}-{}\frac{\cos(j(x-y))\sin(2y)}{\left| \left(1-b\right)^2+4b\sin^2\left(\frac{y}{2}\right)\right|^{\frac{\alpha}{2}+1}}dy\\
	&=-\frac{\alpha bC_\alpha}{2}\sum_{j=1}^\infty b_j \int\!\!\!\!\!\!\!\!\!\; {}-{}\frac{\cos(jx)\cos(jy)\sin(2y)+\sin({jx})\sin({jy})\sin(2y)}{\left| \left(1-b\right)^2+4b\sin^2\left(\frac{y}{2}\right)\right|^{\frac{\alpha}{2}+1}}dy\\
	&=-\frac{\alpha bC_\alpha}{2}\sum_{j=1}^\infty b_j \sin({jx})\int\!\!\!\!\!\!\!\!\!\; {}-{}\frac{\sin({jy})\sin(2y)}{\left| \left(1-b\right)^2+4b\sin^2\left(\frac{y}{2}\right)\right|^{\frac{\alpha}{2}+1}}dy.
\end{align*}
Thus, we conclude by combining the above equalities that
\begin{align*}
	&\quad \partial_{f_2}G_1^\alpha (0, \Omega, 0) h_2\\
	&=(1-\gamma)\alpha b^2C_\alpha\int\!\!\!\!\!\!\!\!\!\; {}-{}\frac{h_2(x-y)\sin(y)}{\left| \left(1-b\right)^2+4b\sin^2\left(\frac{y}{2}\right)\right|^{\frac{\alpha}{2}+1}}\\
	&=(1-\gamma)\alpha b^2C_\alpha \sum_{j=1}^\infty b_j \sin(jx)\int\!\!\!\!\!\!\!\!\!\; {}-{}\frac{\sin(jy)\sin(y)}{\left| \left(1-b\right)^2+4b\sin^2\left(\frac{y}{2}\right)\right|^{\frac{\alpha}{2}+1}}dy\\
	&=(1-\gamma)\alpha b^2\frac{C_\alpha}{2} \sum_{j=1}^\infty b_j\sin(jx) \int\!\!\!\!\!\!\!\!\!\; {}-{}\frac{\cos((j-1)y)-\cos((j+1)y)}{\left| \left(1-b\right)^2+4b\sin^2\left(\frac{y}{2}\right)\right|^{\frac{\alpha}{2}+1}}dy\\
	&=(1-\gamma)\alpha b^2\frac{C_\alpha}{2} \sum_{j=1}^\infty b_j\sin(jx) \left[b^{j-1}\frac{\left(\frac{\alpha}{2}+1\right)_{(j-1)}}{(j-1)!}F(\frac{\alpha}{2}+1, j+\frac{\alpha}{2}, j, b^2)\right.\\
	&\quad \left.-b^{j+1}\frac{\left(\frac{\alpha}{2}+1\right)_{(j+1)}}{(j+1)!}F(\frac{\alpha}{2}+1, j+2+\frac{\alpha}{2}, j+2, b^2)\right]\\
	&=(1-\gamma)\alpha b^2\frac{C_\alpha}{2} \sum_{j=1}^\infty b_j \sin(jx)b^{j-1}\frac{\left(\frac{\alpha}{2}+1\right)_{(j-1)}}{(j-1)!} \left[F(\frac{\alpha}{2}+1, j+\frac{\alpha}{2}, j, b^2)\right.\\
	&\quad \left.-b^2\frac{\left(\frac{\alpha}{2}+j\right)\left(\frac{\alpha}{2}+j+1\right)}{j(j+1)}F(\frac{\alpha}{2}+1, j+2+\frac{\alpha}{2}, j+2, b^2)\right].\\
\end{align*}
Using (3.11), (3.14) and (3.14)-(3.15) in \cite{de1}, we have
\begin{align*}
	&\quad b^2\frac{\left(\frac{\alpha}{2}+j\right)\left(\frac{\alpha}{2}+j+1\right)}{j(j+1)}F(\frac{\alpha}{2}+1, j+2+\frac{\alpha}{2}, j+2, b^2)\\
	&=\frac{\frac{\alpha}{2}+j}{j}\left[F(\frac{\alpha}{2}+1, j+1+\frac{\alpha}{2}, j+1, b^2)-F(\frac{\alpha}{2}, j+1+\frac{\alpha}{2}, j+1, b^2)\right]\\
	&=\frac{\frac{\alpha}{2}+j}{j}\left[\frac{j}{\frac{\alpha}{2}+j}F(\frac{\alpha}{2}+1, j+\frac{\alpha}{2}, j, b^2)+\frac{\frac{\alpha}{2}}{j+\frac{\alpha}{2}}F(\frac{\alpha}{2}+1, j+\frac{\alpha}{2}, j+1, b^2)\right]\\
	&\quad -\frac{\frac{\alpha}{2}+j}{j}F(\frac{\alpha}{2}, j+1+\frac{\alpha}{2}, j+1, b^2)\\
	&=F(\frac{\alpha}{2}+1, j+\frac{\alpha}{2}, j, b^2)\\
	&\quad+\frac{\frac{\alpha}{2}}{j}F(\frac{\alpha}{2}+1, j+\frac{\alpha}{2}, j+1, b^2)-\frac{\frac{\alpha}{2}+j}{j}F(\frac{\alpha}{2}, j+1+\frac{\alpha}{2}, j+1, b^2)\\
	&=F(\frac{\alpha}{2}+1, j+\frac{\alpha}{2}, j, b^2)-F(\frac{\alpha}{2}, j+\frac{\alpha}{2}, j+1, b^2).
\end{align*}
Hence, we finally arrive at
\begin{align}\label{4-10}
	&\quad \partial_{f_2}G_1^\alpha (0, \Omega, 0) h_2\\
	&=(1-\gamma)\alpha b^2\frac{C_\alpha}{2} \sum_{j=1}^\infty b_j \sin(jx)b^{j-1}\frac{\left(\frac{\alpha}{2}+1\right)_{(j-1)}}{(j-1)!} F(\frac{\alpha}{2}, j+\frac{\alpha}{2}, j+1, b^2)\nonumber\\
	&=(1-\gamma)b^2 \sum_{j=1}^\infty  j\Lambda_j(b)b_j\sin(jx).\nonumber
\end{align}

Using the above calculations, one easily obtain
\begin{align}\label{4-11}
	\partial_{f_1}G_2^\alpha (0, \Omega, 0) h_1=-\sum_{j=1}^\infty j\Lambda_j(b)a_j\sin(jx),
\end{align}
and
\begin{align}\label{4-12}
	&\quad \partial_{f_2}G_2^\alpha (0, \Omega, 0) h_2\\
	&=-\sum_{j=1}^{\infty} j\left((1-\gamma)b^{-\alpha}\Theta_j-\Lambda_1(b)\right) b_j \sin(jx).\nonumber
\end{align}
The proof of this lemma is therefore complete.
\end{proof}

We can choose $b$ such that $DG^\alpha(0, \Om, 0)$ is an isomorphism.
\begin{lemma}\label{lm4-4}
	There exists $b_0\in(0,1)$ such that for $b\in(0, b_0)$, $$DG^\alpha(0, \Om, 0):\begin{cases}X_b^k\to Y_b^{k-1},&\quad 0<\alpha<1,\\ X_b^{k+\log}\to Y_b^{k-1},&\quad \alpha=1,\\X_b^{k+\alpha-1}\to Y_b^{k-1},&\quad 1<\alpha<2,\end{cases}$$ is an isomorphism.
\end{lemma}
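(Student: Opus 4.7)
The plan mirrors Lemma \ref{lm3-5}: use the Fourier diagonalization from Lemma \ref{lm4-3} to reduce invertibility of $DG^\alpha(0,\Om,0)$ to invertibility of the $2\times 2$ matrices $M_j$ mode-by-mode, then exploit the small-$b$ asymptotics to make a direct argument succeed for every $j\geq 1$, rather than excluding a countable exceptional set as was done for $\alpha=0$.

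First I would dispose of the $j=1$ mode. Because $\Theta_1 = 0$, the matrix factors as
$$M_1 = \Lambda_1(b)\begin{pmatrix}-(1-\gamma)b^2 & (1-\gamma)b^2\\ -1 & 1\end{pmatrix},$$
which has rank one with image spanned by $((1-\gamma)b^2,1)^T$. Evaluating $M_1$ on the constrained basis vector $((1-\gamma)b^2,1)^T$ of $X_b$ yields $\Lambda_1(b)(1-(1-\gamma)b^2)\cdot((1-\gamma)b^2,1)^T$, which lies in the corresponding constrained subspace of $Y_b^{k-1}$. Since $\Lambda_1(0)>0$ and $1-(1-\gamma)b^2\in(0,1]$ for $\gamma\in[0,1)$, this scalar is nonzero, so $M_1$ is an isomorphism on the one-dimensional constrained subspace.

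Next I would analyse $\det(M_j)$ for $j\geq 2$. Expanding gives
$$\det(M_j)=(\gamma-1)b^{-\alpha}\Theta_j^2+\Theta_j\Lambda_1(b)+(\gamma-1)^2b^{2-\alpha}\Theta_j\Lambda_1(b)+(\gamma-1)b^2\Lambda_1(b)^2+(1-\gamma)b^2\Lambda_j(b)^2.$$
Using $\Theta_j\neq 0$ for $j\geq 2$, $\Lambda_1(b)=\Lambda_1(0)+O(b^2)$, and $\Lambda_j(b)=O(b^{j-1})$ via the hypergeometric representation, the leading contribution for small $b$ is $(\gamma-1)b^{-\alpha}\Theta_j^2$; each sub-leading piece is dominated by a factor of $O(b^\alpha/\Theta_j)+O(b^{2+\alpha})=O(b^\alpha)$ uniformly in $j\geq 2$. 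Because $\gamma-1$ has a fixed sign under $\gamma\in[0,1)$, no cancellation occurs, and choosing $b_0$ small yields the uniform lower bound $|\det(M_j)|\geq c\,b^{-\alpha}\Theta_j^2$ for all $j\geq 2$ and $b\in(0,b_0)$. The cofactor formula then gives entries of $M_j^{-1}$ with $(M_j^{-1})_{11}=O(1/\Theta_j)$, $(M_j^{-1})_{22}=O(b^\alpha/\Theta_j)$, and off-diagonal entries of order $O(b^{j-1+\alpha}/\Theta_j^2)$.

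Finally I would construct the inverse by the mode-by-mode formula $(a_j,b_j)=(jM_j)^{-1}(c_j^{(1)},c_j^{(2)})$ applied to $(g_1,g_2)\in Y_b^{k-1}$, with the constrained $j=1$ mode inverted separately as above. The resulting coefficients decay like $1/(j\Theta_j)$ times the datum, and the target space $V$ is calibrated to this decay: $\Theta_j$ is bounded when $0<\alpha<1$ (gain of one derivative, hence $X^k$), $\Theta_j\sim\log j$ when $\alpha=1$ (hence $X^{k+\log}$), and $\Theta_j\sim j^{\alpha-1}$ when $1<\alpha<2$ (hence $X^{k+\alpha-1}$); in the latter two cases one must additionally verify the integral condition in the definition of the fractional/log space, which follows from the corresponding estimates used in Lemma \ref{lm4-1}. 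The main obstacle is the uniform-in-$j$ determinant lower bound above: unlike the case $\alpha=0$ where polynomial counting in $b$ excludes a countable bad set, here one must exploit the $b^{-\alpha}$ singularity in the $(2,2)$-entry of $M_j$ and the fixed sign of $\gamma-1$ to dominate every sub-leading correction simultaneously for all $j$, which is what forces the smallness restriction on $b$.
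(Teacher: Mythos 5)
Your proposal is correct and follows essentially the same route as the paper: diagonalize via Lemma \ref{lm4-3}, invert the rank-one $j=1$ block on the constrained subspace spanned by $((1-\gamma)b^2,1)^T$ (giving the factor $\Lambda_1(b)(1-(1-\gamma)b^2)\neq 0$), and for $j\geq 2$ make the term $(\gamma-1)b^{-\alpha}\Theta_j^2$ dominate $\det(M_j)$ uniformly in $j$ for $b$ small, exactly as the paper does with its explicit smallness conditions on $b_0$, before matching the $1/(j\Theta_j)$ gain to $X^k$, $X^{k+\log}$, $X^{k+\alpha-1}$ via the Fourier characterization of those spaces. The only differences are cosmetic (relative $O(b^\alpha)$ bookkeeping instead of the paper's three explicit inequalities, and citing the space characterization from \cite{Cas4} rather than Lemma \ref{lm4-1}).
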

\begin{proof}
	We choose $b_0$ such that for $0<b<b_0$, $DG^\alpha(0,\Om, 0)$ is invertible. The determinant of $M_j$ is  $$\det(M_j)=-(1-\gamma)b^{-\alpha}\Theta_j\left(\Theta_j+(\gamma-1)b^2\Lambda_1(b)\right)+\Lambda_1(b)\left(\Theta_j+(\gamma-1)b^2\Lambda_1(b)\right)+b^2\Lambda_j(b)^2.$$
	We need the following properties of $\Theta_j$ and $\Lambda_j$, whose proof can be found in \cite{Cas1}, \cite{de1} and \cite{Has}:
	\begin{align*}
		&\Theta_j>0\, \text{for} \,j\geq 2,\\
		&\Theta_j(b)\, \text{is increasing with respect to}\, j,\\
		&\Lambda_j(b)\, \text{ is decreasing with respect to}\, j,\\
		&\Lambda_j(b)\, \text{ is increasing with respect to}\, b,\\
		&\Theta_j=\begin{cases} O(1),&\quad 0<\alpha<1, \\O(\log (j)),&\quad \alpha=1,\\ O(j^{\alpha-1}),&\quad 1<\alpha<2.\end{cases}
	\end{align*}
	In view of the above properties, we choose $b_0\in (0,1)$ such that
	\begin{align*}
		\frac{\Theta_2}{2}\geq b_0^2\Lambda_1(b_0), \quad \frac{(1-\gamma)b_0^{-\alpha}\Theta_2}{4}\geq \Lambda_1(b_0),\quad \frac{(1-\gamma)b_0^{-\alpha}\Theta_2^2}{8}\geq 3b^2_0\Lambda_1(b_0)^2.
	\end{align*}
	Then, for $0<b<b_0$, we have for $j\geq 2$,
	\begin{align*}
		\det(M_j)&=-(1-\gamma)b^{-\alpha}\Theta_j\left(\Theta_j+(\gamma-1)b^2\Lambda_1(b)\right)+\Lambda_1(b)\left(\Theta_j+(\gamma-1)b^2\Lambda_1(b)\right)+b^2\Lambda_j(b)^2\\
		&\leq -(1-\gamma)b^{-\alpha}\Theta_j\left(\Theta_j-b^2\Lambda_1(b)\right)+\Lambda_1(b)\left(\Theta_j+\gamma b^2\Lambda_1(b)\right)+b^2\Lambda_j(b)^2\\
		&\leq -(1-\gamma)b^{-\alpha}\frac{\Theta_j^2}{2}+\Lambda_1(b)\left(\Theta_j+\gamma b^2\Lambda_1(b)\right)+b^2\Lambda_j(b)^2\\
		&\leq -(1-\gamma)b^{-\alpha}\frac{\Theta_j^2}{4}+\gamma b^2\Lambda_1(b)^2+b^2\Lambda_j(b)^2\\
		&\leq -(1-\gamma)b^{-\alpha}\frac{\Theta_j^2}{8}\leq -(1-\gamma)b^{-\alpha}\frac{\Theta_2^2}{4}.
	\end{align*}
    Hence, $M_j$ is invertible for $j\geq 2$.
    Notice that
    $$M_1=\begin{pmatrix} -(1-\gamma)b^2 \Lambda_1(b) & (1-\gamma)b^2 \Lambda_1(b) \\ -\Lambda_1(b) & \Lambda_1(b) \end{pmatrix}=\Lambda_1(b) \begin{pmatrix} -(1-\gamma)b^2  & (1-\gamma)b^2 \\ -1 & 1\end{pmatrix}$$
     is of rank 1. For $g=(g_1,g_2)\in  Y_b^{k-1}$ with $g_1=b^2 d_1\sin(x)+ \sum_{j=1}^\infty c_j\sin(jx)$ and $g_2=\sum_{j=1}^\infty d_j\sin({jx})$, we define its inverse $f=(f_1, f_2)$ with
    $$f^t=\begin{pmatrix} (1-\gamma)b^2\\ 1\end{pmatrix}\frac{d_1}{(1-(1-\gamma)b^2)\Lambda_1(b)} \cos(x)+\sum_{j=2}^\infty \frac{1}{j}M_j^{-1}\begin{pmatrix} c_j\\ d_j \end{pmatrix}\cos(jx).$$
    Then, by the asymptotic behavior of $\Theta_j$, the fact $\Lambda_j\leq \Lambda_1$ and the characterization of $X^{k+\log}$ and $X^{k+\alpha-1}$ (see Proposition 1.1 and Remark 3.2 \cite{Cas4}), it is easy to verify that $f\in X_b^k$ if $ 0<\alpha<1$,  $f\in X_b^{k+\log}$ if $\alpha=1$ and $f\in X_b^{k+\alpha-1}$ if $ 1<\alpha<2$.
\end{proof}

Next, we adjust $\Om$ such that the range of  $G^\alpha$ belongs to $Y^{k-1}_b$.
\begin{lemma}\label{lm4-5}
	There exists
	\begin{equation*}
		\Omega^\alpha(\varepsilon, \boldsymbol f):=\Omega_*^\alpha+\varepsilon\mathcal{R}_\Omega(\varepsilon,\boldsymbol f)
	\end{equation*}
	with
 $$\Omega_*^\alpha=\sum_{n=1}^{N-1} \frac{\alpha C_\alpha(1-\cos(\frac{2\pi n}{N}))}{2\pi\left((-1+\cos(\frac{2\pi n}{N}))^2 +\sin^2(\frac{2\pi n}{N})\right)^{1+\frac{\alpha}{2}}d^{2+\alpha}}$$
 and a continuous function $\mathcal{R}_\Omega(\varepsilon,\boldsymbol f):(-\varepsilon_0,\varepsilon_0)\times V\to \mathbb{R}$, such that $\tilde G^\alpha(\varepsilon,\boldsymbol f):=G^\alpha(\varepsilon,\Omega^\alpha(\varepsilon,\boldsymbol f), \boldsymbol f) $ maps $(-\varepsilon_0,\varepsilon_0)\times V$ to $Y_b^{k-1}$.
	
	Moreover, $\partial_{\boldsymbol f}\mathcal{R}_\Omega(\varepsilon,\boldsymbol f)h:X^k\times X^k\to \mathbb{R}^2$ is continuous.
\end{lemma}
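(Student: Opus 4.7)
\medskip

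\noindent\textbf{Plan of proof.} The plan is to mimic Lemma~\ref{lm3-6} essentially verbatim, replacing the expansions \eqref{3-9}--\eqref{3-10} by their fractional counterparts \eqref{4-1}--\eqref{4-2}. The guiding observation is that $G^\alpha(0,\Omega_*^\alpha,0)=0$, because the constant $\Omega_*^\alpha$ has been designed precisely so that the $\sin(x)$-coefficient coming from the rotational-symmetry sums in \eqref{4-1}--\eqref{4-2} cancels the $\pi(1-b^2+\gamma b^2)\Omega d\sin(x)$ term on both components simultaneously.

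First I will note that, since every Fourier mode $j\ge 2$ is already compatible across the two components, membership of $G^\alpha(\varepsilon,\Omega,\boldsymbol f)$ in $Y_b^{k-1}$ reduces to the single scalar identity
\[
\int\!\!\!\!\!\!\!\!\!\; {}-{} G_1^\alpha(\varepsilon,\Omega,\boldsymbol f)\sin(x)\,dx \;=\; (1-\gamma)b^2\int\!\!\!\!\!\!\!\!\!\; {}-{} G_2^\alpha(\varepsilon,\Omega,\boldsymbol f)\sin(x)\,dx.
\]
Thus the whole problem is to choose a single scalar $\Omega$ making this one equation hold.

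Second, I will evaluate both sides with the help of \eqref{4-1}--\eqref{4-2}: the rotational-symmetry contributions cancel as just explained, the self-interaction integrals produce explicit linear combinations of the first Fourier coefficients $a_1,b_1$ of $f_1,f_2$ that combine according to the defining constraint $a_1=(1-\gamma)b^2 b_1$ of $X_b^{k+\alpha-1}$ (and its variants), and the remainders come with an overall factor $\varepsilon|\varepsilon|^\alpha$. Writing $\Omega=\Omega_*^\alpha+\Omega_\varepsilon$, the compatibility condition then reduces to a single linear equation in $\Omega_\varepsilon$ of the schematic form
\[
\pi(1-b^2+\gamma b^2)\bigl(1-(1-\gamma)b^2\bigr)\tfrac{d}{2}\,\Omega_\varepsilon\bigl(1+O(\varepsilon)\bigr) \;=\; \varepsilon|\varepsilon|^\alpha\,\Psi(\varepsilon,\boldsymbol f),
\]
with $\Psi$ jointly continuous in $(\varepsilon,\boldsymbol f)$. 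The leading coefficient is strictly positive on the admissible range of $(\gamma,b)$, hence for $\varepsilon_0$ small enough the full coefficient of $\Omega_\varepsilon$ is bounded away from zero and the equation has a unique solution
\[
\Omega_\varepsilon \;=\; \varepsilon|\varepsilon|^\alpha\cdot\widetilde{\mathcal{R}}(\varepsilon,\boldsymbol f),
\]
where $\widetilde{\mathcal{R}}$ inherits continuity and $\boldsymbol f$-differentiability from $\mathcal{R}_1^\alpha,\mathcal{R}_2^\alpha$ via the quotient rule, invoking Lemmas~\ref{lm4-1} and \ref{lm4-2}.

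Third, I will set $\mathcal{R}_\Omega(\varepsilon,\boldsymbol f):=\Omega_\varepsilon/\varepsilon=|\varepsilon|^\alpha\widetilde{\mathcal{R}}$; this is the advertised decomposition, and the regularity statements of the lemma follow directly. The only real bookkeeping obstacle is to track carefully that the $\varepsilon|\varepsilon|^\alpha$ factor of the residuals really pulls through the whole computation so that $\mathcal{R}_\Omega=O(|\varepsilon|^\alpha)$, thereby giving $\Omega^\alpha=\Omega_*^\alpha+O(\varepsilon^{1+\alpha})$ as required by Theorem~\ref{thm1'}; apart from this order-counting, no new analytic difficulty arises beyond what was already addressed in the $\alpha=0$ proof.
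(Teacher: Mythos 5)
Your proposal is correct and takes essentially the same route as the paper, whose proof of this lemma simply defers to Lemma \ref{lm3-6}: you reduce membership in $Y_b^{k-1}$ to the single first-mode identity, expand via \eqref{4-1}--\eqref{4-2}, and solve the resulting affine equation for $\Omega_\varepsilon=\varepsilon|\varepsilon|^\alpha\widetilde{\mathcal{R}}(\varepsilon,\boldsymbol f)$, with the regularity of $\mathcal{R}_\Omega$ inherited from Lemmas \ref{lm4-1} and \ref{lm4-2}. One cosmetic remark: the first-mode terms linear in $\boldsymbol f$ drop out of the compatibility equation because the $\sin(x)$-coefficients of $G^\alpha_1$ and $G^\alpha_2$ at that order are proportional with ratio $(1-\gamma)b^2$ (the two rows of $M_1$), not because of the constraint $a_1=(1-\gamma)b^2b_1$ itself; this does not affect your conclusion.
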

\begin{proof}
	The proof is the same as Lemma \ref{lm3-6}, so we omit it.
\end{proof}

Theorem \ref{thm1'} follows from combining the above lemmas and applying the implicit function theorem.

\section{Existence of traveling solutions}
 The travelling patch pair centered at $(d,0)$ takes the form
\begin{equation*}
	\theta_\varepsilon(\boldsymbol x,t)=\theta_{0,\varepsilon}(\boldsymbol x-tW\boldsymbol e_2)
\end{equation*}
with $W$ some fixed speed, and $\boldsymbol e_2$ the unit vector in $x_2$ direction. According to \eqref{1-1}, we derive
\begin{equation*}
	(\mathbf{u}_0(\boldsymbol x)-W\boldsymbol e_2)\cdot \nabla \theta_{0,\varepsilon}(\boldsymbol x)=0,
\end{equation*}
which yields
\begin{equation*}
	(\mathbf{u}_0(\boldsymbol x)-W\boldsymbol e_2)\cdot \mathbf n(\boldsymbol x)=0,
\end{equation*}
for all $\boldsymbol x$ lies on the interfaces.
According to Biot-Savart law and Green-Stokes formula, when $\alpha=0$, the interfaces satisfies the following equation
\begin{align}\label{5-1}
	0&= H_i^0(\varepsilon, \Omega, \textbf{f})\\
	&=-\pi(1-b^2+\gamma b^2)W\left(\sin(x)-\frac{R_i'(x)}{R_i(x)}\cos(x)\right)\nonumber\\
	&+\frac{1}{2 R_i(x)\varepsilon}\int\!\!\!\!\!\!\!\!\!\; {}-{} \log\left(\frac{1}{ \left(R_i(x)-R_1(y)\right)^2+4R_i(x)R_1(y)\sin^2\left(\frac{x-y}{2}\right)}\right)\nonumber\\
	&\quad\times \left[(R_i(x)R_1(y)+R'_i(x)R'_1(y))\sin(x-y)+(R_i(x)R'_1(y)-R'_i(x)R_1(y))\cos(x-y)\right] dy\nonumber\\
	& -\frac{1-\gamma}{ 2 R_i(x)\varepsilon}\int\!\!\!\!\!\!\!\!\!\; {}-{}\log\left(\frac{1}{ \left(R_i(x)-R_2(y)\right)^2+4R_i(x)R_2(y)\sin^2\left(\frac{x-y}{2}\right)}\right)\nonumber\\
	&\quad\times \left[(R_i(x)R_2(y)+R'_i(x)R'_2(y))\sin(x-y)+(R_i(x)R'_2(y)-R'_i(x)R_2(y))\cos(x-y)\right]dy\nonumber\\
	&- \frac{1}{ R_i(x) \varepsilon} \int\!\!\!\!\!\!\!\!\!\; {}-{}\log\left( \frac{1}{\left| \boldsymbol z_i(x)-d\boldsymbol e_1- Q_{\pi } \left(\boldsymbol z_1(y)-d\boldsymbol e_1\right)\right|}\right)\left[(R_i(x)R_1(y)+R_i'(x)R_1'(y))\sin(x-y-\pi)\right.\nonumber\\
	&\quad \left. +(R_i(x)R_1'(y)-R_i'(x)R_1(y))\cos(x-y-\pi)\right] dy\nonumber\\
	&+ \frac{1-\gamma}{ R_i(x) \varepsilon} \int\!\!\!\!\!\!\!\!\!\; {}-{}\log\left( \frac{1}{\left| \boldsymbol z_i(x)-d\boldsymbol e_1- Q_{\pi } \left(\boldsymbol z_2(y)-d\boldsymbol e_1\right)\right|}\right)\left[(R_i(x)R_2(y)+R_i'(x)R_2'(y))\sin(x-y-\pi)\right.\nonumber\\
	&\quad\left. +(R_i(x)R_2'(y)-R_i'(x)R_2(y))\cos(x-y-\pi)\right] dy.\nonumber
\end{align}
When $0< \alpha<2$, the interfaces obey the following equation
\begin{align}\label{5-2}
	0&= H_i^\alpha(\varepsilon, \Omega, \textbf{f})\\
	&=-\pi(1-b^2+\gamma b^2)W\left(\sin(x)-\frac{R_i'(x)}{R_i(x)}\cos(x)\right)\nonumber\\
	&+\frac{C_\alpha}{R_i(x)\varepsilon|\varepsilon|^\alpha}\int\!\!\!\!\!\!\!\!\!\; {}-{} \frac{1}{\left| \left(R_i(x)-R_1(y)\right)^2+4R_i(x)R_1(y)\sin^2\left(\frac{x-y}{2}\right)\right|^{\frac{\alpha}{2}}}\nonumber\\
	&\quad \times[(R_i(x)R_1(y)+R'_i(x)R'_1(y))\sin(x-y) +(R_i(x)R'_1(y)-R'_i(x)R_1(y))]\cos(x-y)dy\nonumber\\
	& -\frac{(1-\gamma)C_\alpha}{ R_i(x)\varepsilon|\varepsilon|^\alpha}\int\!\!\!\!\!\!\!\!\!\; {}-{}\frac{1}{\left| \left(R_i(x)-R_2(y)\right)^2+4R_i(x)R_2(y)\sin^2\left(\frac{x-y}{2}\right)\right|^{\frac{\alpha}{2}}}\nonumber\\
	&\quad \times [(R_i(x)R_2(y)+R'_i(x)R'_2(y))\sin(x-y)+(R_i(x)R'_2(y)-R'_i(x)R_2(y))]\cos(x-y)dy\nonumber\\
	&- \frac{C_\alpha}{ R_i(x) \varepsilon} \int\!\!\!\!\!\!\!\!\!\; {}-{} \frac{1}{\left| \boldsymbol z_i(x)-d\boldsymbol e_1- Q_{\pi} \left(\boldsymbol z_1(y)-d\boldsymbol e_1\right)\right|^{\alpha}}\left[(R_i(x)R_1(y)+R_i'(x)R_1'(y))\sin(x-y-\pi)\right.\nonumber\\
	&\quad \left. +(R_i(x)R_1'(y)-R_i'(x)R_1(y))\cos(x-y-\pi)\right] dy\nonumber\\
	&+ \frac{(1-\gamma)C_\alpha}{ R_i(x) \varepsilon} \int\!\!\!\!\!\!\!\!\!\; {}-{} \frac{1}{\left| \boldsymbol z_i(x)-d\boldsymbol e_1- Q_{\pi} \left(\boldsymbol z_2(y)-d\boldsymbol e_1\right)\right|^{\alpha}}\left[(R_i(x)R_2(y)+R_i'(x)R_2'(y))\sin(x-y-\pi)\right.\nonumber\\
	&\quad\left. +(R_i(x)R_2'(y)-R_i'(x)R_2(y))\cos(x-y-\pi)\right] dy.\nonumber
\end{align}
{\bf Proof of Theorem \ref{thm2}:} We only sketch the proof. Notice that $H^\alpha$ and  $G^\alpha$ with $N=2$ are almost  identical except for the first term and the signs before the fourth and fifth terms. Thus we can use a similar method as in Sections 3 and 4 to prove the continuity of $H^\alpha(\varepsilon, W, f)$ and $\partial_fH^\alpha(\varepsilon, W, f)$. Moreover, one can verify that for each $W\in\mathbb{R}$, $\partial_fH^\alpha(0, W, 0)$  exactly equals $\partial_f G^\alpha(0,\Om,0)$ and hence $DH^\alpha(0,W,0)$ is an isomorphism under the same assumptions as previous sections. Then we just need to select the appropriate $W$ and apply the implicit function theorem. The  $W_*^\alpha$ chosen is given by
\begin{equation*}
	W_*^\alpha:=\begin{cases}\frac{1}{4\pi d},&\quad \alpha=0,\\ \frac{\alpha C_\alpha}{2\pi (2d)^{1+\alpha}},&\quad 0<\alpha<2.\end{cases}
\end{equation*}
We omit the details.

\phantom{s}
\thispagestyle{empty}

\end{document}